\numberwithin{equation}{section}
\let\Re=\undefined\DeclareMathOperator*{\Re}{Re}
\DeclareMathOperator*{\wlim}{w-lim}
\newcommand{\R}{\mathbb{R}}
\newcommand{\C}{\mathbb{C}}
\newcommand{\E}{\mathcal{E}}
\newcommand{\wh}[1]{\widehat{#1}}
\newcommand{\norm}[1]{\| #1\|}
\newcommand{\bnorm}[1]{\bigg\|#1\bigg\|}
\newcommand{\eps}{\varepsilon}
\newcommand{\norml}[2]{\|#1\|_{L_x^{#2}}}
\newcommand{\N}{\mathcal{N}}
\newcommand{\M}{\mathcal{M}}
\renewcommand{\S}{\mathcal{S}}
\newcommand{\F}{\mathcal{F}}
\newtheorem{theorem}{Theorem}[section]
\newtheorem{lemma}[theorem]{Lemma}
\newtheorem{corollary}[theorem]{Corollary}
\newtheorem{proposition}[theorem]{Proposition}
\theoremstyle{definition}
\newtheorem{definition}[theorem]{Definition}
\newtheorem{remark}[theorem]{Remark}
\theoremstyle{remark}
\newcommand{\qtq}[1]{\quad\text{#1}\quad}
\def\({\left(}
\def\){\right)}
\def\fhsc{\F \dot{H}^{|s_c|}}
\newcommand{\cn}[1]{\| #1\|_{\fhsc}}
\begin{document}
\title[Scattering for mass-subcritical NLS]{Large data mass-subcritical NLS: \\ critical  weighted bounds imply scattering}
\author[R. Killip]{Rowan Killip}\address{Department of Mathematics, UCLA, Los Angeles, USA}\email{killip@math.ucla.edu}	
\author[S. Masaki]{Satoshi Masaki}\address{Department of Systems Innovation Graduate School of Engineering Science Osaka University, Toyonaka, Osaka 560-8531, Japan}\email{masaki@sigmath.es.osaka-u.ac.jp}
\author[J. Murphy]{Jason Murphy}\address{Department of Mathematics, University of California, Berkeley, USA}\email{murphy@math.berkeley.edu}
\author[M. Visan]{Monica Visan}\address{Department of Mathematics, UCLA, Los Angeles, USA}\email{visan@math.ucla.edu}

\maketitle

\begin{abstract}  We consider the mass-subcritical nonlinear Schr\"odinger equation in all space dimensions with focusing or defocusing nonlinearity.  For such equations with critical regularity $s_c\in(\max\{-1,-\frac{d}{2}\},0)$, we prove that any solution satisfying
\[
\|\, |x|^{|s_c|}e^{-it\Delta} u\|_{L_t^\infty L_x^2} <\infty
\]
on its maximal interval of existence must be global and scatter.
\end{abstract}

\section{Introduction}
We consider the initial-value problem for the following nonlinear Schr\"odinger equation (NLS) on $\R\times\R^d$:
\begin{equation}\label{nls}
(i \partial_t  + \Delta) u = \mu |u|^{p} u.
\end{equation}
Here $d\geq 1$, $p>0$, and $u:\R\times\R^d\to\C$ is a complex-valued function.  We consider $\mu=\pm 1$, corresponding to the defocusing and focusing equations, respectively.

The class of solutions to \eqref{nls} is invariant under the rescaling
\begin{equation}\label{eq:scale1}
u(t,x)\mapsto u_{[\lambda]}(t,x) := \lambda^{\frac2p} u(\lambda^2t,\lambda x)\qtq{for}\lambda>0,
\end{equation}
which on initial data takes the form
\begin{equation}\label{eq:scale1.5}
\phi(x)\mapsto \phi_{\{\lambda\}}(x):=\lambda^{\frac2p}\phi(\lambda x)\qtq{for}\lambda>0.
\end{equation}
The scaling symmetry defines a notion of \emph{criticality} for \eqref{nls}.  In particular, the only homogeneous $L_x^2$-based Sobolev space that is left invariant under \eqref{eq:scale1.5} is $\dot H_x^{s_c}$, where the \emph{critical regularity} $s_c$ is given by $s_c=\frac{d}{2}-\frac{2}{p}$.  

Solutions to \eqref{nls} also enjoy the conservation of \emph{mass} and \emph{energy}, defined by
\begin{align*}
&M(u) = \int_{\R^d} |u(t,x)|^2 \,dx, \\
&E(u) = \int_{\R^d} \tfrac12 |\nabla u(t,x)|^2 + \tfrac{\mu}{p+2}|u(t,x)|^{p+2}\,dx,
\end{align*}
respectively.  Equation \eqref{nls} with $p=\frac{4}{d}$ is known as the \emph{mass-critical} NLS, since in this case $M(u_{[\lambda]})\equiv M(u)$ (i.e., $s_c=0$).  Similarly, \eqref{nls} with $p=\frac{4}{d-2}$ (and $d\geq 3$) is known as the \emph{energy-critical} NLS, since in this case $E(u_{[\lambda]})\equiv E(u)$ (i.e., $s_c=1$).  The presence of a scale-invariant conserved quantity has contributed greatly to the popularity of these two particular models.  We review some of the main results concerning these cases below. 

In this paper, we consider the \emph{mass-subcritical} NLS, in which case the critical regularity $s_c$ is negative (or equivalently, $p<\frac4d$).  In this setting, we prescribe initial data in a weighted space, rather than a Sobolev space of negative regularity.  For data in negative regularity Sobolev spaces, there are well-posedness results available in the radial setting \cite{Hid, ChoHwaOza}; however, some forms of \emph{ill-posedness} hold for non-radial data in $H_x^{s}(\R^d)$ whenever $s<0$ \cite{ChrColTao}. The scattering theory for mass-subcritical NLS with data in weighted spaces is a rich subject with an extensive literature, which we briefly review below.  This problem has also been studied in other scale-invariant spaces, such as Fourier--Lebesgue and hat-Morrey spaces \cite{Mas4}. 

In this paper, we study \eqref{nls} in weighted spaces. For $s\in[0,\frac{d}{2})$, we define $\F\dot H^s(\R^d)$ to be the completion of test functions under the norm
\[
\|f\|_{\F\dot H^s(\R^d)} :=  \| \F f \|_{\dot H^s(\R^d)}= \|\, |x|^s f\|_{L_x^2(\R^d)}.
\]
Here $\F$ denotes the Fourier transform.  For $t_0\in\R$, we consider equation \eqref{nls} with the initial condition
\begin{equation}\label{eq:IC}
e^{-i t_0 \Delta} u(t_0) = u_0\in\fhsc,
\end{equation}
where $e^{it\Delta}:=\F^{-1}e^{-it|\xi|^2}\F$ is the Schr\"odinger group.  We also allow $t_0=-\infty$, in which case we interpret \eqref{eq:IC} as
\begin{equation}\nonumber
	\cn{e^{-it\Delta}u(t)-u_0} \to 0\qtq{as} t\to-\infty.
\end{equation}

Equation \eqref{nls} with initial condition \eqref{eq:IC} is a \emph{critical} problem, in the sense that the $\fhsc$-norm is invariant under the rescaling \eqref{eq:scale1.5}.  

As in \cite{Mas2}, we work with the following notion of solution.

\begin{definition}[Solution]\label{D:solution} Let $I\subset\R$ be an interval. A function $u:I\times\R^d\to\C$ is a \emph{solution} to \eqref{nls} on $I$ if $e^{-it\Delta}u(t)\in C\bigl(I;\fhsc(\R^d)\bigr)$ and the following Duhamel formula holds for any $t,\tau\in I$:
\[
e^{-it\Delta}u(t) = e^{-i\tau\Delta}u(\tau) - i \mu\int_{\tau}^t e^{-is\Delta}\bigl(|u|^pu\bigr)(s)\,ds\quad\text{in }\fhsc.
\] 
We call $I$ the \emph{lifespan} of $u$. We call $u$ a \emph{maximal-lifespan} solution if it cannot be extended to any strictly larger interval. We denote the maximal interval of existence of $u$ by $I_{\max}=(T_{\min},T_{\max})$.  We call $u$ \emph{forward-global} if $T_{\max}=\infty$.
\end{definition}

\begin{remark}\label{R:tt} With this notion of solution, equation \eqref{nls} is no longer time-translation invariant.  More precisely, if $u$ is a solution to \eqref{nls} and $\tau\in\R$, then $u(\cdot+\tau)$ is not necessarily a solution.  
\end{remark}

In this paper, we consider all dimensions $d\geq 1$ and restrict the power of the nonlinearity as follows: 
\begin{equation}\label{asmp:p}
\max\bigl(\tfrac2d,\tfrac4{d+2} \bigr)<p< \tfrac4d.
\end{equation}
For this range, local well-posedness in $\fhsc$ was established in \cite{Mas2}; specifically, for any $t_0\in [-\infty,\infty)$ and $u_0\in\fhsc$, there exists an interval $I\subset\R$ with $t_0\in \bar{I}$ and a solution $u$ on $I$ satisfying \eqref{eq:IC}.  The restriction $p>\frac{4}{d+2}$ is technical---it guarantees $|s_c|<1$.  The restriction $p>\frac{2}{d}$, which is known as the \emph{short-range} case, is natural and represents the threshold for (unmodified) scattering, cf. \cite{Bar, Str, TsuYaj}. 

Our main result is the following.

\begin{theorem}\label{T:main} Assume \eqref{asmp:p}.  Let $t_0\in[-\infty,\infty)$ and $u_0\in\fhsc$.  Let $u:I_{\max}\times\R^d\to\C$ be the maximal-lifespan solution to \eqref{nls} with initial condition \eqref{eq:IC}.  Suppose
\begin{equation}\label{asmp:bdd}
\sup_{t\in I_{\max}} \norm{e^{-it\Delta} u(t)}_{\fhsc}  <\infty.
\end{equation}
Then $u$ is forward-global and scatters forward in time; that is, there exists $u_+\in\fhsc$ such that
\begin{equation}\label{eq:scatter}
\lim_{t\to\infty} \| e^{-it\Delta}u(t)-u_+\|_{\fhsc} = 0.
\end{equation}
\end{theorem}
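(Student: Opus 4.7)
The proof will follow the Kenig--Merle concentration-compactness plus rigidity scheme, adapted to the weighted critical space $\fhsc$. I argue by contradiction: introduce the scattering threshold
\[
E^\ast := \sup\bigl\{E\ge 0 : \text{every soln with } \sup_{t\in I_{\max}}\!\cn{e^{-it\Delta}u(t)}\le E \text{ scatters forward}\bigr\}.
\]
Small-data theory in $\fhsc$ --- obtained from weighted Strichartz bounds in conjunction with the local theory of \cite{Mas2} --- guarantees $E^\ast>0$. If Theorem~\ref{T:main} fails, then $E^\ast<\infty$ and one may select a sequence $u_n$ of maximal-lifespan solutions whose uniform $\fhsc$-norms of $e^{-it\Delta}u_n(t)$ converge to $E^\ast$ while their forward scattering norms diverge.

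The next step is to set up a linear profile decomposition in $\fhsc$ for a sequence of ``base points'' $e^{-it_n\Delta}u_n(t_n)\in\fhsc$. The relevant symmetry group consists of the scaling \eqref{eq:scale1.5} (under which $\fhsc$ is invariant) together with the free Schr\"odinger evolution (time translation of the Duhamel-formulated initial condition \eqref{eq:IC}); consistently with Remark~\ref{R:tt}, spatial translation is not a symmetry in this setting. Combining this decomposition with a robust nonlinear profile theory and a stability lemma in $\fhsc$, the pairwise orthogonality of the profile parameters, together with inductive control on the total scattering norm, forces exactly one profile to survive. This yields a \emph{critical element} $u_c\not\equiv 0$: a maximal-lifespan solution satisfying \eqref{asmp:bdd} with threshold constant $E^\ast$, failing to scatter, and whose trajectory $\{e^{-it\Delta}u_c(t) : t\in I_{\max}\}$ is precompact in $\fhsc$ modulo scaling, governed by a continuous frequency scale $\lambda(t)>0$.

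The final step is to exclude $u_c$. Using the Duhamel formula with $t_0\to T_{\max}$ (if finite) or $t_0\to +\infty$ (if $T_{\max}=\infty$), the precompact trajectory has no forward radiative component, so one may write $u_c$ entirely in terms of its own nonlinear interaction integrated from the endpoint. Combined with the short-range condition $p>\tfrac{2}{d}$, this should close a monotonicity argument built from a weighted virial or pseudoconformal-type identity tested against the compact orbit. The weight interacts with $|x|^{|s_c|}$ to produce a coercive quantity that is simultaneously bounded and strictly monotone, forcing $u_c\equiv 0$ and contradicting $E^\ast<\infty$.

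The principal obstacle will be the rigidity step. There is no scale-invariant conserved quantity at the critical regularity $s_c<0$, so coercivity must be manufactured from a virial-type identity whose positivity depends delicately on both the short-range condition $p>\tfrac{2}{d}$ and the weight $|x|^{|s_c|}$; verifying that the identity survives the passage to the limit along the compact orbit --- in particular that it is insensitive to excursions of $\lambda(t)$ toward $0$ or $\infty$ --- is the most substantial task. A secondary difficulty lies in the profile decomposition itself: because $\fhsc$ lacks translation invariance, the decomposition and its associated orthogonality relations must be formulated purely in terms of the scaling action and the linear Schr\"odinger flow, which necessitates a careful analysis of the free evolution in $\fhsc$ and of the structure of bubbling sequences for this weighted norm.
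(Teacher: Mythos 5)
Your first half follows the same broad roadmap as the paper (induction on the critical weighted bound, profile decomposition, stability, extraction of a minimal almost periodic element), but there are two concrete problems. First, and most seriously, the rigidity step as you propose it has a genuine gap. You plan to exclude the critical element by a ``weighted virial or pseudoconformal-type identity'' that is ``simultaneously bounded and strictly monotone.'' No such coercive identity is available here: the critical element is only known to satisfy $e^{-it\Delta}u\in L_t^\infty\fhsc$ with $s_c<0$, so it has no a priori finite variance, no $H^1$ or even $L^2$ membership, and the theorem covers the \emph{focusing} sign with no size restriction, where virial-type quantities carry the wrong sign in general. The paper's actual rigidity mechanism is different: using the broken time-translation symmetry and the weighted bound one shows the frequency scale obeys $h(t)\lesssim_u t^{-1/2}$, the modulation parameters are locally constant, and the minimal element can be upgraded to a \emph{self-similar} almost periodic solution with $I_{\max}=(0,\infty)$, $\xi\equiv 0$, $h(t)=t^{-1/2}$ (Theorem~\ref{thm:ss}). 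This solution is then precluded not by monotonicity but by a gain of decay: a bootstrap among the frequency-localized quantities $\M(A)$, $\S(A)$, $\N(A)$ (via the reduced Duhamel formula, Lemma~\ref{L:parap}, and Strichartz) shows $u(t)\in L_x^2$ with decaying norm, whence mass conservation and mass-subcriticality force $u\equiv0$, contradicting blowup at $t=0$. Your proposal contains no substitute for this step, and the acknowledged vagueness (``should close a monotonicity argument'') is exactly where the argument would fail.

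Second, your symmetry group for the profile decomposition is wrong in a way that matters. You exclude spatial translations (correct, the weight kills them) but also omit Galilei boosts/modulations $f\mapsto e^{ix\xi_0}f$, which preserve the $\fhsc$-norm exactly and form a noncompact group; without the parameters $\xi_n^j$ the decomposition \eqref{eq:pd1} and its orthogonality \eqref{eq:pd2} cannot hold, and the almost periodicity of the critical element must be formulated modulo scaling \emph{and} Galilei boosts, with a frequency center $\xi(t)$ as well as a scale $h(t)$. Relatedly, because time translation is broken (Remark~\ref{R:tt}), one cannot simply minimize over ``maximal-lifespan solutions'' as in your definition of $E^\ast$: the paper must first use scaling to reduce to data prescribed at $t_0=1$, introduce the three quantities $\E_0$, $\E_\infty$, $\E_c$, and track the rescaled times $(h_n^j)^2t_n\to\tau^j$ in the nonlinear profiles; your setup glosses over this bookkeeping, which is precisely what produces the crucial bound $h(t)\lesssim_u t^{-1/2}$ feeding the self-similar reduction.
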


\begin{remark}\label{R:t0} Because the time-translation symmetry is broken (cf. Remark~\ref{R:tt}), one cannot use it to renormalize $t_0=1$, say.  However, one can use the scaling symmetry of the problem to reduce Theorem~\ref{T:main} to a well-posedness statement about $t_0=1$.  We discuss this issue further in Section~\ref{S:existence}. 
\end{remark}

Contraction mapping arguments show that both \eqref{asmp:bdd} and \eqref{eq:scatter} hold for small data $u_0\in\fhsc$, as we discuss in Section~\ref{S:notation}.  The novelty of Theorem~\ref{T:main} appears in the case of large data.

To prove Theorem~\ref{T:main}, we will actually prove a stronger result, namely, that one has uniform space-time bounds depending only on LHS\eqref{asmp:bdd}.   It is not difficult to see that for individual solutions, space-time bounds imply scattering and conversely that scattering implies space-time bounds.  The key advantage of working with space-time bounds, versus scattering, is the uniformity of the space-time bounds with respect to the \emph{a priori} bound \eqref{asmp:bdd}.  

In the case of non-negative critical regularity, scattering is usually written in the equivalent form
\[
\lim_{t\to\infty} \|u(t)-e^{it\Delta}u_+\|_{\dot H_x^s}=0,
\]
with the clear interpretation that $u$ behaves asymptotically like a solution to the linear Schr\"odinger equation.  It is not obvious whether or not \eqref{eq:scatter} is equivalent to
\[
\lim_{t\to\infty} \| u(t)-e^{it\Delta}u_+\|_{\fhsc} = 0;
\]
we refer the interested reader to \cite{Beg}, where some positive results in this direction are proved under additional assumptions on the parameters $d$ and $p$. 

Theorem~\ref{T:main} fits in the context of recent work on NLS at critical regularity.  Beginning with the pioneering work of Bourgain \cite{Bou1} on the energy-critical NLS, the last 10--20 years have seen an explosion of progress in this area.  For the defocusing mass- and energy-critical problems, it is now known that arbitrary initial data in the critical Sobolev space lead to global solutions that scatter; for the focusing problems, scattering below the ground state is known in all cases except for the energy-critical problem in three dimensions \cite{Bou1, CKSTT, Dod1, Dod2, Dod3, Dod4, Dod5, Gri, KenMer, KTV, KV2, KV3, KVZ, RV, Tao1, TVZ, Vis0, Vis1, Vis2}. Concomitant with these results was the development of the concentration compactness approach to induction on energy \cite{BahGer, BegVar, Bou2, CarKer, KenMer, Ker1, Ker2, MerVeg}, which reduces the question of global well-posedness and scattering to the exclusion of `minimal counterexamples' satisfying good compactness properties.  See especially \cite{KenMer} for the first application of this approach to establish a global well-posedness result. 

Apart from the mass- and energy-critical cases, one does not have a (coercive) conserved quantity at the level of the critical regularity.  The limits of current technology yield a class of conditional results, in which an \emph{a priori} boundedness assumption such as \eqref{asmp:bdd} fulfills the role of the missing conservation law.  Scattering results under the assumption of $\dot H^{s_c}$-bounds have been established in both the \emph{energy-supercritical} ($s_c>1$) and \emph{intercritical} ($s_c\in(0,1)$) regimes \cite{DMMZ, KenMer2, KV4, MMZ, Mur1, Mur2, Mur3, XieFan}.  Theorem~\ref{T:main} is the first result of this type in the setting of mass-subcritical NLS; it is also the first such result for a focusing NLS. 

Theorem~\ref{T:main} should also be considered in the context of the scattering theory for mass-subcritical NLS in weighted spaces.  The literature here is quite extensive, and we only mention a few relevant results.  Previous works have considered mass-subcritical NLS with data $u_0\in\Sigma:=\dot H^1\cap \F\dot H^{1}$.  In particular, $u_0\in L_x^2$ and hence (by mass-subcriticality) solutions are global, even in the focusing case.  In the defocusing case, solutions scatter in $L_x^2$ if and only if $p>\frac{2}{d}$ \cite{Bar, Str, TsuYaj}.  For $p\geq \frac{4}{d+2}$, one expects scattering in $\Sigma$; however, for $p$ close to $\frac{4}{d+2}$, this is only known for small data \cite{CazWei}.  In the focusing case, solitons exist and provide examples of non-scattering solutions.  As we discuss below, solitons do not obey \eqref{asmp:bdd}.  Theorem~\ref{T:main} provides a sharp condition for scattering in $\fhsc$ for the range \eqref{asmp:p} for both the focusing and defocusing problems.  For a more extensive review of the literature, we refer the reader to \cite{Caz, NakOza} and the references therein.

Theorem~\ref{T:main} is a conditional result.  It relies on the \emph{a priori} assumption \eqref{asmp:bdd}.  This assumption is rather strong, as can be seen from the following estimate: by Sobolev embedding and \eqref{def:Xn} below,
\begin{equation}\label{strong-hyp}
\| u(t)\|_{L_x^{\frac{2d}{d-2|s_c|}}} \lesssim |t|^{-|s_c|}\|e^{-it\Delta}u(t)\|_{\fhsc}.
\end{equation}
Thus, \eqref{asmp:bdd} implies decay in time; in particular, \eqref{asmp:bdd} is inconsistent with solitons, which is already a big step in the direction of scattering.  This is in contrast to the case $s_c\geq 0$, in which case \emph{a priori} bounds on the $\dot H_x^{s_c}$-norm do not immediately imply any decay in time.

The decay \eqref{strong-hyp} is inherently incompatible with soliton solutions, thus showing that \eqref{asmp:bdd} is also not obeyed by soliton solutions.  This does not undermine the significance of Theorem~\ref{T:main} in the focusing case, since as discussed in \cite{Mas1, Mas2}, solitons are \emph{not} the minimal non-scattering solutions (by any metric) in the mass-subcritical setting.

In light of \eqref{strong-hyp}, it is natural to ask  whether \eqref{asmp:bdd} implies scattering rather immediately. On the one hand, the decay rate in \eqref{strong-hyp} matches the dispersive estimate for linear solutions.  Furthermore, \eqref{strong-hyp} implies
\begin{equation}\label{strong-hyp2}
\| u\|_{L_t^{\frac{1}{|s_c|},\infty} L_x^{\frac{2d}{d-2|s_c|}}(I\times\R^d)} \lesssim \|e^{-it\Delta}u(t)\|_{L_t^\infty \fhsc(I\times\R^d)}
\end{equation}
on any interval of existence, providing uniform control over a space-time norm that is invariant under the rescaling \eqref{eq:scale1}.  However, \emph{a priori} control over the norm appearing on the left-hand side of \eqref{strong-hyp2} alone is \emph{not} sufficient to yield scattering. Because this norm is of weak-type in time, one cannot break a time interval into subintervals on which this norm is small; such `frangibility' of a scaling-critical norm is crucial for deducing scattering from space-time bounds.  

We will prove that \eqref{asmp:bdd} \emph{does} imply control over a critically-scaling Lorentz-type norm of the form $L_t^{q,2}L_x^r$, which is a more suitable `scattering norm' (see the definition of the $S$-norm in Section~\ref{S:FS}).  This norm is finite for linear solutions with data in $\fhsc$ (see Proposition~\ref{prop:Strichartz} below); however, it is far from obvious that this norm is bounded for solutions satisfying the assumption \eqref{asmp:bdd}.  The proof of this fact essentially comprises the bulk of the paper.

As in previous works on NLS at critical regularity, we take the concentration compactness approach to induction on energy.  We show that if Theorem~\ref{T:main} were to fail, then we could find a minimal non-scattering solution.  As a consequence of minimality, this solution satisfies a compactness property, namely, the interaction variable $e^{-it\Delta}u(t)$ is almost periodic modulo symmetries in $\fhsc$.  In particular, the behavior of the solution is governed by time-dependent `modulation parameters' representing the physical dimension and velocity of the wave.  In Section~\ref{S:reduction}, we are able to further reduce consideration to the case of a certain type of `self-similar' solution (see Definition~\ref{Def:SS} below).

In order to rule out the self-similar scenario, we employ arguments inspired by earlier work on the radial mass-critical NLS \cite{KTV, KVZ}.  We show that such a solution would belong to $L_x^2$, which, by mass-subcriticality, implies that the solution would be global.  On the other hand, self-similar solutions blow up at $t=0$.  Thus, we reach a contradiction and complete the proof of Theorem~\ref{T:main}. 

The rest of this paper is organized as follows:  In Section~\ref{S:notation} we set up notation and record some useful lemmas.  In Section~\ref{S:existence} we prove Theorem~\ref{thm:apsol}, which shows that if Theorem~\ref{T:main} fails, then there exist almost periodic minimal blowup solutions.  We also discuss the role of $t_0$ in Theorem~\ref{T:main}.  In Section~\ref{S:reduction}, we show that if Theorem~\ref{T:main} fails, then we can find a self-similar almost periodic solution.  Finally, in Section~\ref{S:preclusion}, we show that self-similar almost periodic solutions cannot exist.

\subsection*{Acknowledgements} R. K. was supported by NSF grants DMS-1265868 and DMS-1600942 and by the Simons Foundation grant 342360.  S. M. was supported by JSPS KAKENHI, Scientic Research (S) 23224003 and JSPS KAKENHI, Grant-in-Aid for Young Scientists (B) 2474010. J. M. was supported by the NSF Postdoctoral Fellowship DMS-1400706. M. V. was supported by NSF grant DMS-1500707.

\section{Notation and useful lemmas}\label{S:notation}

For non-negative $X$ and $Y$, we write $X \lesssim Y$ to denote $X \le CY$ for some $C > 0$.  If $X \lesssim Y \lesssim X$, we write $X \sim Y$.  The dependence of implicit constants on parameters will be indicated by subscripts, e.g.\ $X \lesssim_u Y$ denotes $X \le CY$ for some $C = C(u)$.  We write $a'\in[1,\infty]$ to denote the H\"older dual exponent to $a\in[1,\infty]$, that is, the solution to $\tfrac{1}{a}+\tfrac{1}{a'}=1$. 

The Fourier transform on $\R^d$ is defined by
\[
\F f(\xi):=\wh{f}(\xi) := (2\pi)^{-\frac{d}{2}} \int_{\R^d} e^{-ix\xi} f(x)\,dx.
\]

For $s\in\R$, the operator $|\nabla|^s$ is defined to be the Fourier multiplier operator with multiplier $|\xi|^s$. 

We recall the standard Littlewood--Paley projection operators.  Let $\varphi$ be a radial bump function on $\R^d$ supported on $\{ |\xi| \le \frac53 \}$ and equal to one on $\{ |\xi| \le \frac43 \}$.  For $N\in 2^{\mathbb{Z}}$, we define the operators $P_N$, $P_{\leq N}$, and $P_{>N}$ via
\begin{align*}
&\widehat{P_{\le N} f}(\xi):= \widehat{f_{\le N}}(\xi):= \varphi(\tfrac{\xi}{N}) \widehat{f}(\xi) ,
\quad \widehat{P_{> N} f}(\xi):= \widehat{f_{> N}}(\xi):= \bigl(1-\varphi(\tfrac{\xi}{N})\bigr) \widehat{f}(\xi) ,\\
&\widehat{P_{N} f}(\xi):= \widehat{f_{N}}(\xi):= \bigl(\varphi(\tfrac{\xi}{N})-\varphi(\tfrac{2\xi}{N})\bigr)\widehat{f}(\xi).
\end{align*}

We recall the following standard Bernstein estimates. 
\begin{lemma}[Bernstein] For $1 \le r \le q \le \infty$,
\begin{align*}
\norml{|\nabla|^s f_N}r &\sim N^s \norml{f_N}r \qtq{for}s\in\R, \\
\norml{|\nabla|^s f_{\le N}}r  &\lesssim N^s \norml{f_{\le N}}r \qtq{for}s\geq 0,\\
\norml{ f_{\le N}}q  &\lesssim N^{\frac{d}r-\frac{d}q} \norml{ f_{\le N}}r.
\end{align*}
\end{lemma}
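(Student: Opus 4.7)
The plan is to realize each operator on the left-hand side as a convolution operator and invoke Young's inequality. The first step is to introduce \emph{fattened} Littlewood--Paley projections $\tilde P_N$ and $\tilde P_{\le N}$, with smooth symbols $\tilde\eta$ and $\tilde\varphi$ that equal $1$ on the supports of $\eta(\xi):=\varphi(\xi)-\varphi(2\xi)$ and $\varphi$ respectively, and are supported in a slightly enlarged compact annulus (bounded away from $0$) and a slightly enlarged compact ball, respectively. The identities $f_N=\tilde P_N f_N$ and $f_{\le N}=\tilde P_{\le N} f_{\le N}$ then reduce each estimate to computing an appropriate $L_x^a$-norm of the convolution kernel of $|\nabla|^s\tilde P_N$, $|\nabla|^s\tilde P_{\le N}$, or $\tilde P_{\le N}$.

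For the first estimate, write the symbol of $|\nabla|^s\tilde P_N$ as $N^s m(\xi/N)$ with $m(\xi):=|\xi|^s\tilde\eta(\xi)$. Since $\tilde\eta$ is smooth and compactly supported away from $0$, $m$ is Schwartz, and so is $\check m$; a change of variables shows that the kernel of $|\nabla|^s\tilde P_N$ is $N^{s+d}\check m(N\,\cdot)$, whose $L_x^1$-norm is $N^s\|\check m\|_{L_x^1}\sim N^s$. Young's inequality then yields $\norml{|\nabla|^s f_N}{r}\lesssim N^s\norml{f_N}{r}$; the reverse inequality follows by applying the same argument with $s$ replaced by $-s$ to $|\nabla|^s f_N$, establishing the desired equivalence.

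The second estimate proceeds identically with $m(\xi):=|\xi|^s\tilde\varphi(\xi)$. The technical wrinkle is that, when $s$ is not an even integer, $m$ fails to be smooth at the origin, so $\check m$ need not be Schwartz. One can nevertheless verify $\check m\in L_x^1$ by splitting $\tilde\varphi$ into a piece supported away from $0$ (whose contribution to $\check m$ is Schwartz) and a piece supported near $0$; for the latter, the homogeneity of $|\xi|^s$ together with standard formulas for its Fourier transform yields decay of order $|x|^{-d-s}$ for $\check m$ at infinity, which is integrable when $s\ge 0$. Scaling then gives a kernel with $L_x^1$-norm $\lesssim N^s$, and Young's inequality concludes.

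For the third estimate, the kernel of $\tilde P_{\le N}$ is the Schwartz function $N^d\check{\tilde\varphi}(N\,\cdot)$, and a direct change of variables gives $\|N^d\check{\tilde\varphi}(N\,\cdot)\|_{L_x^a}=N^{d(1-1/a)}\|\check{\tilde\varphi}\|_{L_x^a}$ for any $a\in[1,\infty]$. Choosing $a$ so that $\tfrac1a=1+\tfrac1q-\tfrac1r$---admissible since $r\le q$ forces $a\in[1,\infty]$---Young's convolution inequality yields $\norml{f_{\le N}}{q}\lesssim N^{d/r-d/q}\norml{f_{\le N}}{r}$. The principal obstacle throughout is verifying $L_x^1$-integrability of the kernel in the second step for non-integer $s$; all other arguments reduce to a scaling computation combined with Young's inequality.
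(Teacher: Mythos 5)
Your proof is correct and is the standard textbook argument via kernel scaling and Young's convolution inequality; the paper itself states this lemma as a standard fact without giving a proof, so there is no in-paper argument to compare against. The reduction to fattened multipliers, the scaling identity $K_N(x)=N^{s+d}\check m(Nx)$, and the choice $\tfrac1a = 1+\tfrac1q-\tfrac1r$ in Young's inequality are all exactly right. One very small point: your claim that $|x|^{-d-s}$ decay at infinity is ``integrable when $s\ge 0$'' fails at $s=0$ (there the decay is only $|x|^{-d}$, which is borderline non-integrable); however, the $s=0$ case of the second estimate is trivial since $|\nabla|^0$ is the identity, or alternatively $m=\tilde\varphi$ is Schwartz so $\check m$ is Schwartz directly. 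It would be cleaner to either note that trivial case separately or to carry out the dyadic decomposition $\sum_{j\le 0} |\xi|^s\eta(2^{-j}\xi)\tilde\varphi(\xi)$ near the origin, each piece having $L^1$ inverse Fourier transform of size $O(2^{js})$, which sums for $s>0$ and makes the ``standard formulas'' step rigorous without appeal to the homogeneous distribution $\mathcal F^{-1}|\xi|^s$.
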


We also need the following fractional calculus estimates.  The first is a fractional chain rule from \cite{ChrWei}.  The second is a fractional chain rule for H\"older continuous functions from \cite{Vis1}; see also \cite[Lemma 3.7]{MasSeg} for the case where $G$ is $C^{1+\alpha}$ and $0<s<1+\alpha$. 

\begin{lemma}[Fractional calculus \cite{ChrWei, Vis1}]\label{L:fractional} \text{ }
\begin{itemize}
\item[(i)] Suppose $G\in C^1(\C)$ and $s\in(0,1]$. Let $1<r,r_2<\infty$ and $1<r_1\leq\infty$ satisfy $\frac{1}{r}=\frac{1}{r_1}+\frac{1}{r_2}$.  Then
\[
\| |\nabla|^s G(u) \|_{L_x^r} \lesssim \|G'(u)\|_{L_x^{r_1}} \||\nabla|^s u\|_{L_x^{r_2}}. 
\]
\item[(ii)] Suppose $G$ is H\"older continuous of order $\alpha\in(0,1)$. Then, for every $0<s<\alpha$, $1<r<\infty$, and $\frac{s}{\alpha}<\sigma<1$, we have
\[
\| |\nabla|^s G(u) \|_{L_x^r} \lesssim \|\,|u|^{\alpha-\frac{s}{\sigma}}\|_{L_x^{r_1}} \| |\nabla|^\sigma u\|_{L_x^{\frac{s}{\sigma}r_2}}^{\frac{s}{\sigma}},
\]
provided $\frac{1}{r}=\frac{1}{r_1}+\frac{1}{r_2}$ and $(1-\frac{s}{\alpha\sigma})r_1>1$.
\end{itemize}
\end{lemma}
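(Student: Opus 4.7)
The plan is to prove both parts via the pointwise Strichartz square-function characterization of the fractional Laplacian,
$$\||\nabla|^s f\|_{L_x^r} \sim \left\|\left(\int_{\R^d} \frac{|f(x+h)-f(x)|^2}{|h|^{d+2s}}\,dh\right)^{\!1/2}\right\|_{L_x^r}, \qquad s\in(0,1),\ 1<r<\infty.$$
This reduces each estimate to bounding the integrated differences $|G(u(x+h))-G(u(x))|$ pointwise and then invoking Hölder's inequality together with the Hardy--Littlewood maximal inequality. The endpoint $s=1$ in (i) is immediate from the classical chain rule $\nabla G(u)=G'(u)\nabla u$ combined with Hölder's inequality, so throughout I restrict attention to $s\in(0,1)$.

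For (i), the mean value theorem gives
$$G(u(x+h))-G(u(x))=\left(\int_0^1 G'\bigl(u(x)+t(u(x+h)-u(x))\bigr)\,dt\right)(u(x+h)-u(x)).$$
Christ and Weinstein's strategy is to split the $h$-integral according to whether $|u(x+h)-u(x)|\le\tfrac12|u(x)|$ or not. On the former region, $|G'|$ along the segment is comparable to $|G'(u(x))|$, which can be pulled out pointwise; on the latter, one dominates by a Hardy--Littlewood maximal function applied to $G'(u)$. Combining the two regions into the square function and applying Hölder in $x$ with exponents $(r_1,r_2)$ yields the claim.

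For (ii), Hölder continuity of $G$ gives $|G(u(x+h))-G(u(x))|\lesssim|u(x+h)-u(x)|^\alpha$. Writing $\alpha=(\alpha-\tfrac s\sigma)+\tfrac s\sigma$, use the crude bound $|u(x+h)-u(x)|^{\alpha-s/\sigma}\lesssim|u(x)|^{\alpha-s/\sigma}+|u(x+h)|^{\alpha-s/\sigma}$, pull the pointwise factor outside the $h$-integral (modulo a Hardy--Littlewood maximal function applied to $|u|^{\alpha-s/\sigma}$), and estimate the remaining expression
$$\int\frac{|u(x+h)-u(x)|^{2s/\sigma}}{|h|^{d+2s}}\,dh$$
by Hölder in $h$ with exponents $(\sigma/s,\sigma/(\sigma-s))$, together with a truncation of $|h|$ at a suitable scale to absorb the resulting logarithmic factor by another maximal function. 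This recovers the $|\nabla|^\sigma$-square function raised to the power $s/\sigma$. A final Hölder in $x$ with exponents $(r_1,\tfrac s\sigma r_2)$ delivers the stated inequality; the hypothesis $(1-\tfrac{s}{\alpha\sigma})r_1>1$ is exactly what is needed to apply the strong maximal inequality on the $L^{r_1}$ factor, and the hypothesis $\tfrac{s}{\alpha}<\sigma$ is what makes the exponent $\alpha-\tfrac{s}{\sigma}$ positive in the first place.

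The main obstacle in both parts is the avoidance of an $L^\infty$ loss on $G'(u)$ (respectively $|u|^{\alpha-s/\sigma}$) when the running argument of $G$ wanders far from the pointwise value at $x$. This is precisely what the pointwise split combined with the Hardy--Littlewood maximal inequality accomplishes. In part (ii), the subtler point is the borderline integrability of the kernel $|h|^{-d-2s}$ after extracting only a fractional power of $|u(x+h)-u(x)|$; the auxiliary parameter $\sigma\in(s/\alpha,1)$ is what allows a clean Hölder splitting in $h$ and a maximal-function-controlled truncation, rather than a direct use of $\alpha$ (which could put the exponents on the boundary of integrability).
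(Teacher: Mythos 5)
This lemma is imported from the cited references \cite{ChrWei, Vis1}; the paper itself offers no proof, so the relevant comparison is with the arguments in those sources rather than with anything in the present text.

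Your sketch of part (i) is a reasonable summary of the Christ--Weinstein strategy (mean value theorem, a case split according to the size of $|u(x+h)-u(x)|$ relative to $|u(x)|$, and Hardy--Littlewood maximal control of the wild region), and the $s=1$ reduction to the classical chain rule is correct.

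Part (ii), however, has a genuine gap: the pointwise H\"older-in-$h$ step you propose cannot work, truncation or no. You want a pointwise bound of the form
\[
\int \frac{|u(x+h)-u(x)|^{2s/\sigma}}{|h|^{d+2s}}\,dh \lesssim \Bigl(\int \frac{|u(x+h)-u(x)|^{2}}{|h|^{d+2\sigma}}\,dh\Bigr)^{s/\sigma}\cdot(\text{maximal terms}),
\]
but for $s/\sigma<1$ this is false even after truncations. Indeed, take $u$ with $|u(x+h)-u(x)|\sim |h|^{\sigma+\eps}$ for $|h|\le 1$ (a Weierstrass-type function). Then the left side scales like $\eps^{-1}$ while the $|\nabla|^\sigma$ Gagliardo square function, raised to the $s/\sigma$, scales like $\eps^{-s/\sigma}$, which is strictly smaller as $\eps\to 0$ since $s/\sigma<1$; the maximal-function correction terms are $O(1)$ for such bounded $u$, so they cannot close the gap. (The integrated inequality in the lemma is still true in this example --- one can check $\||\nabla|^s G(u)\|_{L^r}=O(1)$ while the right side blows up --- which shows the estimate is genuinely an $L^r$ statement with no pointwise precursor of the form you need.) Visan's proof in \cite{Vis1} avoids this obstruction by working with the Littlewood--Paley square function $\|(\sum_N N^{2s}|P_N G(u)|^2)^{1/2}\|_{L^r}$ and splitting $u=u_{\le N}+u_{>N}$ at each dyadic frequency, exploiting the cancellation of $P_N$ and treating $P_N[G(u)-G(u_{\le N})]$ and $P_N G(u_{\le N})$ separately; the parameter $\sigma$ enters through a frequency-localized gain rather than a pointwise H\"older split, so the borderline $|h|^{-d}$ integral you are fighting never appears. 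Your explanation of the role of the hypothesis $(1-\frac{s}{\alpha\sigma})r_1>1$ is correct in spirit (it guarantees the maximal operator acts boundedly on the $L^{r_1}$ factor), but the core mechanism for producing the $|\nabla|^\sigma u$ term needs to be redone along the Littlewood--Paley lines just described.
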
	

We also make use of the following paraproduct estimate.  This type of estimate was proven in \cite{Vis2} in four dimensions; it is straightforward to generalize the proof to any dimension.

\begin{lemma}[Paraproduct estimate, \cite{Vis2}]\label{L:para}
Let $1<r<r_1<\infty$, $1<r_2<\infty$, and $0<s<d$ satisfy 
$\frac{d}{r_1}+\frac{d}{r_2} = \frac{d}r + s <d$.
Then,
\[
	\norml{|\nabla|^{-s}(fg)}r \lesssim \norml{|\nabla|^{-s}f}{r_1} \norml{|\nabla|^{s} g}{r_2}.
\]
\end{lemma}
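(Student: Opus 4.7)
The plan is to decompose the product via the Bony paraproduct,
\[
fg = \pi_{hl}(f,g) + \pi_{lh}(f,g) + \pi_{hh}(f,g),
\]
with $\pi_{hl}(f,g) := \sum_N f_N g_{\leq N/8}$, $\pi_{lh}(f,g) := \sum_N f_{\leq N/8} g_N$, and $\pi_{hh}(f,g) := \sum_{N\sim M} f_N g_M$. Set $F := |\nabla|^{-s}f$ and $G := |\nabla|^s g$. The key inputs will be the pointwise Bernstein-type bounds
\[
|f_N(x)| \lesssim N^s \mathcal{M}F_N(x), \quad |f_{\leq N/8}(x)| \lesssim N^s \mathcal{M}F(x), \quad |g_M(x)| \lesssim M^{-s}\mathcal{M}G_M(x),
\]
where $\mathcal{M}$ denotes the Hardy--Littlewood maximal function; each follows because the relevant Littlewood--Paley projection composed with $|\nabla|^{\pm s}$ is convolution with an $L^1$ kernel of norm $\sim N^{\pm s}$ (integrability uses $0<s<d$). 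The auxiliary ingredients are the Littlewood--Paley square function theorem, the Fefferman--Stein vector-valued maximal inequality (both on $L^p$, $1<p<\infty$), Sobolev embedding, and the Hardy--Littlewood--Sobolev inequality for the Riesz potential $|\nabla|^{-s}$.

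For either off-diagonal piece, $f_* g_*$ is Fourier-localized in $\{|\xi|\sim N\}$, so the LP square function yields
\[
\||\nabla|^{-s}\pi_*\|_{L^r} \lesssim \bnorm{\Big(\sum_N N^{-2s}|f_*|^2|g_*|^2\Big)^{1/2}}_{L^r}.
\]
For $\pi_{hl}$ I will insert $N^{-s}|f_N|\lesssim \mathcal{M}F_N$ and $|g_{\leq N/8}|\leq \mathcal{M}g$, then apply H\"older with $\frac{1}{r}=\frac{1}{r_1}+\frac{1}{\tilde r_2}$, Fefferman--Stein, the LP square function for $F$, and Sobolev embedding $\|g\|_{L^{\tilde r_2}}\lesssim\||\nabla|^s g\|_{L^{r_2}}$ with $\frac{1}{\tilde r_2}=\frac{1}{r_2}-\frac{s}{d}$. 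For $\pi_{lh}$ I will instead use the uniform-in-$N$ bound $N^{-s}|f_{\leq N/8}|\lesssim \mathcal{M}F$ to pull $\mathcal{M}F$ outside the square, reducing the estimate to $\|\mathcal{M}F\cdot(\sum_N|g_N|^2)^{1/2}\|_{L^r}$; then H\"older, LP square, and Sobolev close the argument identically. Exponent matching $\frac{1}{r_1}+\frac{1}{\tilde r_2}=\frac{1}{r}$ follows from the hypothesis, and $\tilde r_2\in(1,\infty)$ is automatic from the constraints $r<r_1<\infty$ and $\frac{d}{r_2}>s$ (the latter coming from $r_1>r$).

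For $\pi_{hh}$, the product $f_N g_M$ with $N\sim M$ is not frequency-localized, so I will first apply HLS to reduce to an $L^p$ estimate with $\frac{1}{p}=\frac{1}{r_1}+\frac{1}{r_2}=\frac{1}{r}+\frac{s}{d}$; the requirement $p>1$ is exactly the hypothesis $\frac{d}{r_1}+\frac{d}{r_2}<d$. Then the pointwise bound gives $|f_N g_M|\lesssim \mathcal{M}F_N\cdot\mathcal{M}G_M$ (the $N^s$ and $M^{-s}\sim N^{-s}$ factors cancel since $N\sim M$); Cauchy--Schwarz in the dyadic index, H\"older in $L^p=L^{r_1}L^{r_2}$, Fefferman--Stein, and the LP square function complete the estimate. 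The main technical subtlety lies in $\pi_{lh}$: the specific distribution of derivatives---$|\nabla|^{-s}$ on the low factor and $|\nabla|^s$ on the high factor---is precisely what produces the cancellation $N^s\cdot N^{-s}=1$ that allows $\mathcal{M}F$ to factor uniformly out of the square in $N$; with any other arrangement of derivatives one loses this cancellation and the estimate breaks.
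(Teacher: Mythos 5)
Your argument is correct: the scaling and exponent bookkeeping checks out (in particular $\frac{d}{r_2}>s$ does follow from $r<r_1$, and $p>1$ for the high-high piece is exactly $\frac{d}{r_1}+\frac{d}{r_2}<d$), and the Bony decomposition with Bernstein-type maximal bounds, the square function theorem, Fefferman--Stein, Sobolev embedding, and HLS is precisely the standard route. The paper itself does not reprove the lemma but defers to \cite{Vis2}, whose proof proceeds along essentially these same lines, so your write-up simply supplies the omitted details.
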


\subsection{The linear Schr\"odinger equation} The Schr\"odinger group $e^{it\Delta}=\F^{-1}e^{-it|\xi|^2}\F$ is written in physical space as
\[
[e^{it\Delta} f](x) = (4\pi i t)^{-\frac{d}{2}}\int_{\R^d} e^{\frac{i|x-y|^2}{4t}} f(y)\,dy, \quad t\neq 0.
\]
From this, one can read off the $L_x^1\to L_x^\infty$ dispersive estimate;  as $e^{it\Delta}$ is unitary on $L_x^2$, interpolation yields the following general class of dispersive estimates:
\[
\| e^{it\Delta} f\|_{L_x^r(\R^d)} \lesssim |t|^{-(\frac{d}{2}-\frac{d}{r})} \|f\|_{L_x^{r'}(\R^d)}\qtq{for all}r\geq 2.
\]

Defining the multiplication operator $M(t):=e^{\frac{i|x|^2}{4t}}$ and the dilation operator
\[
[D(t)f](x) := (2i t)^{-\frac{d}{2}} f\bigl(\tfrac{x}{2t}\bigr),
\]
one can also read off the well-known factorization $e^{it\Delta} = M(t)D(t)\F M(t)$. From this, one can deduce the identity
\begin{equation}\label{eq:d_form}
e^{it\Delta} m(x) e^{-it\Delta}  = M(t) m(2it\nabla) M(-t)
\end{equation}
for reasonable multipliers $m$, where we write $m(i\nabla)$ for the Fourier multiplier operator with multiplier $m(\xi)$. 

The Galilean operator $J(t):=x+2it\nabla$ has played an important role in the scattering theory for mass-subcritical NLS.  By direct calculation and \eqref{eq:d_form}, we may also write
\[
J(t) = e^{it\Delta} x e^{-it\Delta} =  M(t) 2it\nabla M(-t).
\]

We define powers of $J$ as follows:
\[
J^s(t) = e^{it\Delta}|x|^s e^{-it\Delta} = M(t) (-4t^2\Delta)^{\frac{s}{2}} M(-t)\qtq{for}s\in\R.
\]
It will be useful to have localized versions of the operators $J^s(t)$, as well.  Using \eqref{eq:d_form}, we define
\begin{equation}\label{def:JN}
J_{\leq N}^s(t) := e^{it\Delta} \varphi(\tfrac{x}{N})|x|^s e^{-it\Delta} = M(t) P_{\leq \frac{N}{|t|}}(-4t^2\Delta)^{\frac{s}{2}}M(-t).
\end{equation}
We can similarly define $J_{N}^s$ and $J_{>N}^s$. 

The identity
\begin{equation}\label{gal1}
[e^{it\Delta}(e^{ix\xi_0}f)](x) = e^{-it|\xi_0|^2+ix\xi_0}(e^{it\Delta}f)(x-2t\xi_0)
\end{equation}
shows that the class of solutions to the linear Schr\"odinger equation is invariant under \emph{Galilei boosts}:
\begin{equation}\label{gsym}
u(t,x) \mapsto e^{-it|\xi_0|^2+ix\xi_0}u(t,x-2\xi_0 t),\quad \xi_0\in\R^d.
\end{equation}
Exploiting gauge invariance, one can check that \eqref{gsym} is a symmetry for the nonlinear equation \eqref{nls}, as well.

Incorporating scaling into \eqref{gal1} leads to the useful formula
\begin{equation}\label{gal}
\bigl[e^{it\Delta}\bigl(e^{ix\xi_0}f(\tfrac{\cdot}{\lambda})\bigr)\bigr](x) = e^{-it|\xi_0|^2+ix\xi_0}(e^{it\lambda^{-2}\Delta}f)(\tfrac{x-2t\xi_0}{\lambda}). 
\end{equation}

\subsection{Function spaces} The operators $J^s(t)$ essentially play the role of differentiation operators.  We define the time-dependent spaces $\dot X^{s,r}=\dot X^{s,r}(t)$ via the norm
\begin{equation}\label{def:Xn}
\norm{f}_{\dot{X}^{s,r}} := \norm{ J^s(t) f}_{L_x^r(\mathbb{R}^d)}\sim \norm{\,|t|^s |\nabla|^s M(-t) f}_{L_x^r(\R^d)}.
\end{equation}
When $r=2$, we abbreviate $\dot X^s = \dot X^{s,2}$.  The equivalence of norms in \eqref{def:Xn} is valid provided $t\neq 0$. 

At certain points in the paper, it will be natural to write
\begin{equation}\label{def:sp}
f\in e^{it\Delta}\F\dot H^s\iff e^{-it\Delta}f\in\F\dot H^s.
\end{equation}
This is simply a change of notation: $e^{it\Delta}\F\dot H^s=\dot X^{s}(t)$.

We use Lorentz-modified space-time norms.  For an interval $I$, $1\leq q<\infty$, and $1\leq\alpha\leq\infty$, the \emph{Lorentz space} $L_t^{q,\alpha}(I)$ is defined via the quasi-norm
\[
\| f\|_{L_t^{q,\alpha}(I)} := \bigl\| \lambda\,\bigl|\{t\in I: |f(t)|>\lambda\}\bigr|^{\frac{1}{q}}\bigr\|_{L^\alpha((0,\infty),\frac{d\lambda}{\lambda})}.
\]
For a Banach space $X$, we write $L_t^{q,\alpha}X(I\times\R^d)$ for the space of functions $u:I\times\R^d\to\C$ such that
\[
\| u\|_{L_t^{q,\alpha}X(I\times\R^d)} := \bigl\|\,\|u(t)\|_{X} \bigr\|_{L_t^{q,\alpha}(I)}<\infty.
\]

The following H\"older's inequality for Lorentz spaces will be of frequent use.

\begin{lemma}[H\"older in Lorentz spaces, \cite{One,Hunt}]\label{lem:gH} The following estimate holds:
\[
\norm{fg}_{L_t^{q,\alpha}}\lesssim \norm{f}_{L_t^{q_1,\alpha_1}} \norm{g}_{L_t^{q_2,\alpha_2}}
\]
for any $1\leq q,q_1,q_2<\infty$ and $1\leq \alpha,\alpha_1,\alpha_2\leq\infty$ satisfying
\[
\tfrac{1}{q}=\tfrac{1}{q_1}+\tfrac{1}{q_2}\qtq{and} \tfrac{1}{\alpha}=\tfrac{1}{\alpha_1}+\tfrac{1}{\alpha_2}.
\]
\end{lemma}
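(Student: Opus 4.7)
The inequality is the classical O'Neil--Hunt H\"older inequality for Lorentz spaces, so my plan is essentially to reproduce the standard rearrangement-based proof. The first step is to replace the distribution-function definition of the quasi-norm given in the excerpt by the equivalent decreasing-rearrangement definition
\[
\norm{f}_{L_t^{q,\alpha}} \sim \Bigl(\int_0^\infty \bigl[s^{1/q} f^*(s)\bigr]^{\alpha}\,\tfrac{ds}{s}\Bigr)^{1/\alpha},
\]
with the usual modification if $\alpha=\infty$. Here $f^*(s)=\inf\{\lambda:|\{t:|f(t)|>\lambda\}|\leq s\}$ is the nonincreasing rearrangement, and the equivalence is a direct change of variables in the layer-cake representation.

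The key pointwise input is the product rearrangement bound
\[
(fg)^*(s_1+s_2)\leq f^*(s_1)\,g^*(s_2),
\]
which follows from the definition of $f^*$ together with a distributional set-inclusion argument: if $|f(t)|\leq f^*(s_1)$ off a set of measure at most $s_1$ and similarly for $g$, then $|fg(t)|\leq f^*(s_1)g^*(s_2)$ off a set of measure at most $s_1+s_2$. Specializing to $s_1=s_2=s/2$ and rescaling gives $(fg)^*(s)\leq f^*(s/2)\,g^*(s/2)$.

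Plugging this bound into the rearrangement characterization of the quasi-norm of $fg$ and performing a harmless change of variable $s\mapsto 2s$ (absorbed into the implicit constants), the task reduces to
\[
\bnorm{s^{1/q}f^*(s)\,g^*(s)}_{L^{\alpha}(ds/s)} \lesssim \bnorm{s^{1/q_1}f^*(s)}_{L^{\alpha_1}(ds/s)} \bnorm{s^{1/q_2}g^*(s)}_{L^{\alpha_2}(ds/s)}.
\]
Writing $s^{1/q}=s^{1/q_1}\cdot s^{1/q_2}$ with $\tfrac{1}{q}=\tfrac{1}{q_1}+\tfrac{1}{q_2}$ and applying the ordinary H\"older inequality in the $\sigma$-finite measure space $((0,\infty),ds/s)$ with exponents $\alpha_1,\alpha_2$ satisfying $\tfrac{1}{\alpha}=\tfrac{1}{\alpha_1}+\tfrac{1}{\alpha_2}$ closes the estimate.

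The only mild subtlety, which I expect to be the main bookkeeping issue rather than a conceptual obstacle, is that $\norm{\cdot}_{L_t^{q,\alpha}}$ is only a quasi-norm (it fails the triangle inequality when $\alpha>q$) and is not identical to the Calder\'on maximal norm built from $f^{**}(s)=\tfrac{1}{s}\int_0^s f^*(r)\,dr$; however since the inequality is stated up to implicit constants and $q_1,q_2,q<\infty$, the passage between the two is harmless. This is precisely the route taken in \cite{One,Hunt}, whose argument we simply invoke.
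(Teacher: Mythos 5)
The paper offers no proof of this lemma---it simply cites O'Neil and Hunt---and your argument is a correct reconstruction of the standard rearrangement-based proof from those references: pass to the $f^*$-characterization of the Lorentz quasi-norm, apply the product rearrangement bound $(fg)^*(s_1+s_2)\le f^*(s_1)\,g^*(s_2)$ with $s_1=s_2=s/2$, and finish with H\"older on $((0,\infty),\tfrac{ds}{s})$. The closing caveat about quasi-norms versus the Calder\'on $f^{**}$-norm is harmless but also unnecessary, since no triangle inequality in the Lorentz space is ever invoked.
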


We also have the following equivalence:
\[
\| f\|_{L_t^{q,\alpha}(I)} \sim \bigl\| \, \|f \cdot\textbf{1}_{\{2^{k-1}\leq |f|\leq2^{k}\}}\|_{L_t^q(I)}\bigr\|_{\ell^\alpha(k\in\mathbb{Z})}.
\]
In particular, if $f$ is roughly constant on $I$, then $\|f\|_{L_t^{q,\alpha}(I)}\sim\|f\|_{L_t^q(I)}$.

\subsection{Strichartz estimates} We utilize Strichartz estimates adapted to the spaces $L_t^{q,\alpha}\dot X^{s,r}$.  The standard Strichartz estimates for $e^{it\Delta}$ were proved in \cite{Strichartz, KeeTao, GinVel}.  The specific variants we need were proved in \cite{NakOza}.

\begin{definition} A pair $(q,r)$ is \emph{admissible} if 
\[
2<q<\infty,\quad 2<r<\begin{cases} \frac{2d}{d-2} & d\geq 3 \\ \infty & d\in\{1,2\}\end{cases},\qtq{and} \tfrac{2}{q}+\tfrac{d}{r}=\tfrac{d}{2}.
\]
Note that according to this definition, endpoints are \emph{not} admissible.
\end{definition}

\begin{proposition}[Strichartz estimates, \cite{NakOza}]\label{prop:Strichartz}
Let $s\geq 0$ and $t_0 \in I \subset \mathbb{R}$.
\begin{enumerate}
\item For any admissible pair $(q,r)$, 
\[
\norm{e^{it\Delta} f}_{L_t^\infty \dot{X}^s \cap L_t^{q,2}\dot X^{s,r}(\mathbb{R}\times\mathbb{\R}^d)} \lesssim \norm{f}_{\mathcal{F} \dot{H}^s}.
\]
\item For any admissible pairs $(q,r)$ and $(\alpha,\beta)$,
\[
\bnorm{ \int_{t_0}^t e^{i(t-s)\Delta} F(s) \,ds}_{L_t^\infty\dot{X}^s\cap L_t^{q,2}\dot X^{s,r}(I\times \R^d)}\lesssim \norm{F}_{L_t^{\alpha',2}\dot{X}^{s,\beta'}(I \times \R^d)}.
\]
\end{enumerate}
\end{proposition}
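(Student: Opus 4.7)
The plan is to reduce both assertions to the classical Lorentz-refined Strichartz estimates for $e^{it\Delta}$ acting on $L^2$ data, using the intertwining identity
\[
J^s(t)\,e^{it\Delta} = e^{it\Delta}\,|x|^s,
\]
which is immediate from the definition $J^s(t) = e^{it\Delta}|x|^s e^{-it\Delta}$, and the tautological identification
\[
\|\,|x|^s f\|_{L^2_x} = \|f\|_{\F\dot H^s}.
\]

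For part (1), the intertwining identity gives
\[
\|J^s(t)\,e^{it\Delta}f\|_{L^r_x} = \|e^{it\Delta}g\|_{L^r_x}, \qquad g:= |x|^s f,
\]
with $\|g\|_{L^2_x}=\|f\|_{\F\dot H^s}$. The $L^\infty_t L^2_x$ component then follows from unitarity of $e^{it\Delta}$ on $L^2$, while the $L^{q,2}_t L^r_x$ component is the standard Lorentz-refined Strichartz estimate: since $(q,r)$ is strictly non-endpoint, it follows by real interpolation (with second index $2$) between classical Strichartz bounds at two nearby admissible pairs, using that $(L^{q_0}_t, L^{q_1}_t)_{\theta,2} = L^{q,2}_t$.

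For part (2), I would commute $J^s(t)$ inside the Duhamel integral via the same intertwining identity:
\[
J^s(t)\int_{t_0}^t e^{i(t-s')\Delta} F(s')\,ds' = \int_{t_0}^t e^{i(t-s')\Delta}\,J^s(s')F(s')\,ds'.
\]
This is verified by writing $J^s(t) = e^{it\Delta}|x|^s e^{-it\Delta}$, pulling $|x|^s$ outside the integral, and then inserting $e^{\pm is'\Delta}$ to recover a $J^s(s')$ under the integral. Setting $G(s') := J^s(s') F(s')$, so that $\|G\|_{L^{\alpha',2}_t L^{\beta'}_x} = \|F\|_{L^{\alpha',2}_t \dot X^{s,\beta'}}$, the claim reduces to the inhomogeneous Lorentz-refined Strichartz estimate
\[
\bnorm{\int_{t_0}^t e^{i(t-s')\Delta} G(s')\,ds'}_{L^\infty_t L^2_x \cap L^{q,2}_t L^r_x} \lesssim \|G\|_{L^{\alpha',2}_t L^{\beta'}_x}.
\]
As before, this is obtained by a $TT^*$ argument combined with real interpolation with second index $2$ between two interior admissible pairs, relying on the dispersive estimate and the Hardy--Littlewood--Sobolev inequality in Lorentz spaces (Lemma~\ref{lem:gH}) to handle the time integration.

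The main technical obstacle is confirming the Lorentz refinement in time. Because the definition of admissibility used here excludes endpoints on both sides, one stays strictly in the interior of the Keel--Tao range, and the $L^{q,2}_t$ refinement is a direct output of real interpolation applied to the scale of standard Strichartz estimates — precisely the argument carried out in \cite{NakOza}. No new ingredient beyond the intertwining identity is needed to transfer these bounds from $e^{it\Delta}:L^2_x\to L^r_x$ to the weighted setting $\mathcal{F}\dot H^s \to \dot X^{s,r}$.
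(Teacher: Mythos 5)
Your proposal is correct, and the intertwining reduction $J^s(t)\,e^{it\Delta}=e^{it\Delta}\,|x|^s$ (so that $\dot X^{s,r}$ norms of the flow become $L^r_x$ norms of $e^{it\Delta}(|x|^sf)$, with $\|\,|x|^s f\|_{L^2}=\|f\|_{\F\dot H^s}$) is exactly the right way to reduce the weighted statement to an unweighted Lorentz-refined Strichartz estimate; the commutation through the Duhamel integral in part (2) is likewise correct. The paper itself does not prove the proposition but merely cites \cite{NakOza}, so there is no ``paper proof'' to compare against; your sketch is a legitimate derivation in the spirit of the paper's toolbox.

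One imprecision worth noting: in part (1) you say the $L_t^{q,2}L_x^r$ bound ``follows by real interpolation between classical Strichartz bounds at two nearby admissible pairs, using $(L_t^{q_0},L_t^{q_1})_{\theta,2}=L_t^{q,2}$.'' That interpolation identity is for scalar Lorentz spaces with a \emph{fixed} target Banach space; if you interpolate estimates $L^2\to L_t^{q_0}L_x^{r_0}$ and $L^2\to L_t^{q_1}L_x^{r_1}$ at two genuinely different admissible pairs, then $r_0\neq r_1$ as well (admissibility ties $q$ to $r$), and the real interpolation space $(L_t^{q_0}L_x^{r_0},L_t^{q_1}L_x^{r_1})_{\theta,2}$ is not simply $L_t^{q,2}L_x^r$. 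The clean route is the one you already invoke for part (2): $TT^*$, the dispersive estimate in $x$, and the Hardy--Littlewood--Sobolev inequality with Lorentz refinement in $t$ (equivalently, O'Neil's convolution inequality, cf.\ Lemma~\ref{lem:gH}), which gives $L_t^{q',2}L_x^{r'}\to L_t^{q,2}L_x^{r}$ for the Duhamel operator directly and then $L^2\to L_t^{q,2}L_x^r$ for the free flow. This small fix does not change the structure of your argument.
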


\subsection{Specific function spaces}\label{S:FS} Throughout this paper, we use some specific choices of exponents repeatedly.  These are the same exponents that were used in \cite{Mas2}.  Henceforth, we assume \eqref{asmp:p}.

If $p\geq 1$, we define
\[
\bigl(\tfrac1{q_1},\tfrac1{r_1} \bigr) = \bigl(\tfrac1{q_2},\tfrac1{r_2} \bigr) := \bigl(\tfrac{dp-2}{2(p+2)}, \tfrac{2d+4-dp}{2d(p+2)} \bigr),
\quad\bigl( \tfrac1{\tilde{q}}, \tfrac1{\tilde{r}} \bigr):=\bigl(\tfrac{4-(d-1)p}{p(p+2)}, \tfrac{2dp-4}{dp(p+2)}\bigr).
\]

For $p<1$, we define
\begin{align*}
s_0&:=\tfrac2{p}+\tfrac{d(d-2)}8 (p+1) -\tfrac{d^2+6d-8}{8}, \\
\bigl(\tfrac1{q_1},\tfrac1{r_1} \bigr) &:= \bigl( \tfrac{dp}8-\tfrac{p(s_0+1)}{4(p+1)} ,\tfrac12- \tfrac{p}{4} + \tfrac{p(s_0+1)}{2d(p+1)} \bigr),\\
\bigl( \tfrac1{\tilde{q}}, \tfrac1{\tilde{r}} \bigr) &:= \bigl( \tfrac1p -\tfrac{d}4+\tfrac{s_0+1}{2(p+1)} ,\tfrac12- \tfrac{s_0+ 1}{d(p+1)}\bigr), \\
\bigl(\tfrac1{q_2},\tfrac1{r_2} \bigr) &:= \bigl( \tfrac1{\tilde{q}} -|s_c|,\tfrac1{\tilde{r}} +\tfrac{|s_c|}d\bigr).
\end{align*}
Notice that \eqref{asmp:p} guarantees $0< s_0 < p |s_c|$.

The pairs $(q_j,r_j)$ are admissible; the pair $(\tilde q, \tilde r)$ satisfies the critical scaling relation
\[
\tfrac{2}{\tilde q}+\tfrac{d}{\tilde r} = \tfrac{d}{2}-s_c=\tfrac{2}{p}.
\]
The scaling relations between these exponents are as follows: 
\begin{equation}\label{scaling}
 \tfrac{1}{q_1'}=\tfrac{p}{\tilde q} + \tfrac{1}{q_1}, \quad \tfrac{1}{r_1'}=\tfrac{p}{\tilde r}+ \tfrac{1}{r_1}, \quad \tfrac{1}{\tilde q} - \tfrac{1}{q_2} = \tfrac{d}{r_2} - \tfrac{d}{\tilde r} = |s_c|.
\end{equation}

We define the spaces
\[
 S^{\text{weak}} := L_t^{\tilde q,\infty} L_x^{\tilde r},\quad S:= L_t^{\tilde q,2} L_x^{\tilde r}, \quad W := \cap_{j=1}^2 L_t^{q_j,2}\dot X^{|s_c|,r_j}
\]
for the solution, as well as the space
\[
N := L_t^{q_1',2}\dot X^{|s_c|,r_1'}
\]
for the nonlinearity.  We write $S^{\text{weak}}(I)$ to indicate that the norm is taken over the space-time slab $I\times\R^d$, and similarly for the other spaces.  We also use the notation
\[
S_I(u) := \|u\|_{S(I)}. 
\]

\subsection{Nonlinear estimates} In this section, we collect estimates that will be crucial for controlling the nonlinearity throughout the rest of the paper.  Throughout the section, we denote
\[
\tilde u(t):=M(-t)u(t).
\]

\begin{lemma}[Embeddings]\label{L:embeddings} The following holds:
\[
\|u\|_{S^{\text{weak}}} \lesssim \|u\|_{S} \lesssim \|u\|_{W}. 
\]
\end{lemma}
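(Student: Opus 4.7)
The first inequality, $\|u\|_{S^{\text{weak}}} \lesssim \|u\|_S$, is immediate from the standard nesting of Lorentz spaces $L_t^{\tilde q, 2}(I) \hookrightarrow L_t^{\tilde q, \infty}(I)$ (valid since $2 \leq \infty$), applied at the outer time level with inner Banach space $L_x^{\tilde r}$. No further work is needed for this piece.

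For the harder inequality $\|u\|_S \lesssim \|u\|_W$, the plan is to work with the $j=2$ component of $W$ and execute a two-step argument dictated by the scaling relations in \eqref{scaling}. First, for each fixed $t \neq 0$, I would combine Sobolev embedding in space with the rewriting in \eqref{def:Xn}. Since $M(-t)$ has unit modulus, $\|u(t)\|_{L_x^{\tilde r}} = \|M(-t)u(t)\|_{L_x^{\tilde r}}$, and the scaling identity $\tfrac{d}{r_2} - \tfrac{d}{\tilde r} = |s_c|$ from \eqref{scaling} is exactly what is needed so that Sobolev embedding yields $\|M(-t)u(t)\|_{L_x^{\tilde r}} \lesssim \||\nabla|^{|s_c|} M(-t) u(t)\|_{L_x^{r_2}}$; the rewriting in \eqref{def:Xn} then identifies this with $|t|^{-|s_c|} \|u(t)\|_{\dot X^{|s_c|,r_2}}$ up to constants. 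The net output of this step is the pointwise-in-time bound $\|u(t)\|_{L_x^{\tilde r}} \lesssim |t|^{-|s_c|}\|u(t)\|_{\dot X^{|s_c|,r_2}}$ for all $t \neq 0$.

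Second, I would take the $L_t^{\tilde q, 2}$ norm of both sides. A direct distribution-function computation shows $|t|^{-|s_c|} \in L_t^{1/|s_c|,\infty}(\R)$. The remaining scaling identity $\tfrac{1}{\tilde q} = |s_c| + \tfrac{1}{q_2}$ from \eqref{scaling} is precisely what Lemma~\ref{lem:gH} requires (with the trivial second-index pairing $\tfrac{1}{2} = \tfrac{1}{\infty} + \tfrac{1}{2}$) in order to conclude $\|u\|_{L_t^{\tilde q, 2} L_x^{\tilde r}} \lesssim \||t|^{-|s_c|}\|_{L_t^{1/|s_c|, \infty}} \, \|u\|_{L_t^{q_2, 2} \dot X^{|s_c|, r_2}}$, and the right-hand side is controlled by $\|u\|_W$.

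There is no substantive obstacle here: the spatial and temporal exponents introduced in Section~\ref{S:FS} have clearly been arranged so that Sobolev embedding in space and Lorentz Hölder in time mesh with no loss. The only bookkeeping is to verify that the distribution function of $|t|^{-|s_c|}$ on $\R$ equals $2\lambda^{-1/|s_c|}$, which gives $\||t|^{-|s_c|}\|_{L_t^{1/|s_c|, \infty}(\R)} < \infty$ and is immediate.
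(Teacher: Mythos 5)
Your proposal is correct and follows essentially the same route as the paper: the first inequality via the Lorentz nesting $L_t^{\tilde q,2}\hookrightarrow L_t^{\tilde q,\infty}$, and the second by using unimodularity of $M(-t)$, Sobolev embedding with $\tfrac{d}{r_2}-\tfrac{d}{\tilde r}=|s_c|$, the identification in \eqref{def:Xn}, and H\"older in Lorentz spaces (Lemma~\ref{lem:gH}) with $|t|^{-|s_c|}\in L_t^{1/|s_c|,\infty}$ against the $(q_2,r_2)$ component of $W$. No discrepancies to report.
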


\begin{proof} The first estimate is simply the embedding $L_t^{\tilde q,2}\hookrightarrow L_t^{\tilde q,\infty}$.  For the second estimate, we use Sobolev embedding, \eqref{scaling}, and H\"older  to estimate
\begin{align*}
\|u\|_{L_t^{\tilde q,2} L_x^{\tilde r}} & \lesssim \| |\nabla|^{|s_c|}\tilde u\|_{L_t^{\tilde q,2} L_x^{r_2}} \lesssim \| |t|^{-|s_c|} \|_{L_t^{\frac{1}{|s_c|},\infty}} \| |t|^{|s_c|}|\nabla|^{|s_c|}\tilde u\|_{L_t^{q_2,2} L_x^{r_2}}.
\end{align*}
The result now follows from \eqref{def:Xn}.\end{proof}

\begin{lemma}[Nonlinear estimate]\label{L:nonlinear} The following holds:
\[
\|\,|u|^p u\|_{N} \lesssim \|u\|_{S^{\text{weak}}}^p \|u\|_W. 
\]
\end{lemma}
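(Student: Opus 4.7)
The plan is to transfer the estimate to the variables $\tilde u(t):=M(-t)u(t)$, on which $|\nabla|^{|s_c|}$ acts cleanly through \eqref{def:Xn}, and then apply a fractional chain rule in space followed by H\"older in Lorentz in time. Since $|M(-t)|\equiv 1$, we have $M(-t)(|u|^pu)=|\tilde u|^p\tilde u$ and $\|u(t)\|_{L_x^r}=\|\tilde u(t)\|_{L_x^r}$ for every $r$. Using the equivalence in \eqref{def:Xn}, the nonlinearity norm becomes
\[
\|\,|u|^p u\|_{N} \sim \bigl\|\, |t|^{|s_c|} \bigl\||\nabla|^{|s_c|}(|\tilde u|^p\tilde u)\bigr\|_{L_x^{r_1'}} \bigr\|_{L_t^{q_1',2}}.
\]

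Next, the spatial factor is estimated pointwise in $t$ by a fractional chain rule. When $p\ge 1$, the function $G(z)=|z|^pz$ lies in $C^1$ with $|G'(z)|\lesssim|z|^p$, and Lemma~\ref{L:fractional}(i) with $s=|s_c|\in(0,1)$ and the H\"older pairing $\tfrac{1}{r_1'}=\tfrac{p}{\tilde r}+\tfrac{1}{r_2}$ (this uses $r_1=r_2$ together with \eqref{scaling}) gives
\[
\bigl\||\nabla|^{|s_c|}(|\tilde u|^p\tilde u)\bigr\|_{L_x^{r_1'}} \lesssim \|\tilde u\|_{L_x^{\tilde r}}^{p}\,\bigl\||\nabla|^{|s_c|}\tilde u\bigr\|_{L_x^{r_2}}.
\]
When $p<1$ the chain rule for H\"older continuous nonlinearities (Lemma~\ref{L:fractional}(ii)) is used instead, with a parameter $\sigma\in(\tfrac{s}{\alpha},1)$ tuned so that the admissibility condition $(1-\tfrac{s}{\alpha\sigma})r_1>1$ holds; after Sobolev embedding and the relations \eqref{scaling}, one arrives at an estimate of the same shape, with $\|\tilde u\|_{L_x^{\tilde r}}^p$ multiplying $\||\nabla|^{|s_c|}\tilde u\|_{L_x^{r_2}}$.

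After multiplying through by $|t|^{|s_c|}$ so that the second factor is precisely $\|u(t)\|_{\dot X^{|s_c|,r_2}}$, we apply Lemma~\ref{lem:gH} with the relation $\tfrac{1}{q_1'}=\tfrac{p}{\tilde q}+\tfrac{1}{q_2}$ from \eqref{scaling}:
\[
\|\,|u|^p u\|_N \lesssim \bigl\|\,\|u\|_{L_x^{\tilde r}}^{p}\bigr\|_{L_t^{\tilde q/p,\infty}} \bigl\|\,\|u\|_{\dot X^{|s_c|,r_2}}\bigr\|_{L_t^{q_2,2}} = \|u\|_{S^{\text{weak}}}^p\,\|u\|_{L_t^{q_2,2}\dot X^{|s_c|,r_2}}\le \|u\|_{S^{\text{weak}}}^p\,\|u\|_W,
\]
where we used $\|\,|f|^p\|_{L_t^{\tilde q/p,\infty}}=\|f\|_{L_t^{\tilde q,\infty}}^p$ (a straightforward rescaling of the distributional definition of the weak-$L^q$ quasi-norm) and that $L_t^{q_2,2}\dot X^{|s_c|,r_2}$ is one of the two components of $W$.

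The main obstacle is the low-regularity regime $p<1$, in which one can no longer apply the standard $C^1$ chain rule. Here one has to invoke the H\"older fractional chain rule and juggle the auxiliary parameters $\alpha$ and $\sigma$ together with an intermediate Sobolev embedding so that the resulting inequality still sits on the right-hand exponents $(\tilde q,\tilde r)$ and $(q_2,r_2)$; the smallness of $s_0$ relative to $p|s_c|$ guaranteed by \eqref{asmp:p} is precisely what secures admissibility of these parameter choices. Once this is in place, combining the spatial bound with the time H\"older inequality is essentially routine.
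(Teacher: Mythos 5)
Your overall strategy is exactly the paper's: pass to $\tilde u=M(-t)u$ so that $J^{|s_c|}$ becomes $|t|^{|s_c|}|\nabla|^{|s_c|}$ via \eqref{def:Xn}, apply a fractional chain rule in space, and close with Lorentz--H\"older in time using \eqref{scaling}. For $p\geq 1$ your argument is complete and correct (there $(q_1,r_1)=(q_2,r_2)$, so your pairing is the one in \eqref{scaling}).

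The problem is the case $p<1$, which is not a corner case here: for $d\geq 4$ the entire range \eqref{asmp:p} has $p<1$. Two things go wrong in your sketch. First, invoking Lemma~\ref{L:fractional}(ii) for $F(z)=|z|^pz$ is not legitimate: that lemma requires $G$ to be H\"older continuous of order $\alpha\in(0,1)$, and $F$ is not (globally) H\"older of any order $\alpha<1$ --- in fact $F\in C^1(\C)$ for every $p>0$, with $|F'(z)|\lesssim |z|^p$. So the correct tool for this lemma is part (i), uniformly in $p$, since only $|s_c|<1$ derivatives are taken; part (ii) is reserved in the paper for the genuinely H\"older-continuous functions $G_1(z)=\tfrac{p+2}{2}|z|^p$ and $G_2(z)=\tfrac{pz^2}{2|z|^2}|z|^p$ arising in the difference estimate (Lemma~\ref{L:parap}), not for $F$ itself. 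Second, your target exponents are off for $p<1$: the H\"older pairing furnished by \eqref{scaling} is $\tfrac1{r_1'}=\tfrac{p}{\tilde r}+\tfrac1{r_1}$ and $\tfrac1{q_1'}=\tfrac{p}{\tilde q}+\tfrac1{q_1}$, i.e.\ the derivative factor should land in $L_t^{q_1,2}\dot X^{|s_c|,r_1}$, not in $L_t^{q_2,2}\dot X^{|s_c|,r_2}$; with $r_2$ in place of $r_1$ the exponents simply do not add up when $p<1$, and the appeal to ``Sobolev embedding and tuning of $\sigma$'' does not repair this (the embedding goes in the direction $\dot X^{|s_c|,r_2}\hookrightarrow L_x^{\tilde r}$, which is what Lemma~\ref{L:embeddings} already uses). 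The fix is short: for all $p$ in the admissible range apply Lemma~\ref{L:fractional}(i) with $s=|s_c|$ and the $(q_1,r_1)$ pairing, obtaining $\|\,|u|^pu\|_N\lesssim \|u\|_{S^{\text{weak}}}^p\|u\|_{L_t^{q_1,2}\dot X^{|s_c|,r_1}}\leq \|u\|_{S^{\text{weak}}}^p\|u\|_W$, since the $(q_1,r_1)$ component is also part of $W$.
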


\begin{proof}  By gauge invariance and \eqref{def:Xn}, we can write
\[
\| J^{|s_c|}\bigl(|u|^p u\bigr)\|_{L_t^{q_1',2}L_x^{r_1'}} \sim \|\, |t|^{|s_c|}|\nabla|^{|s_c|}\bigl(|\tilde u|^p \tilde u\bigr)\|_{L_t^{q_1',2} L_x^{r_1'}}.
\]
The result now follows from the fractional chain rule, H\"older, \eqref{scaling}, and \eqref{def:Xn}. \end{proof} 

The following technical estimate is a consequence of the paraproduct estimate (Lemma~\ref{L:para}) and the fractional chain rule.  It will be used in Section~\ref{S:preclusion}. 

\begin{lemma}\label{L:parap} Let $F(z)=|z|^p z$. For $\eps>0$ sufficiently small,
\begin{equation}\label{E:parap}
\bigl\| |t\nabla|^{-\eps} [F(\tilde u+\tilde v) - F(\tilde u)] \bigr\|_{L_t^{q_1',2}L_x^{r_1'}}
	\lesssim \Bigl\{ \| u\|_{W}^p + \| v\|_{W}^p\Bigr\}  \| |t\nabla|^{-\eps}\tilde v\|_{L_t^{q_1,2}L_x^{r_1}}.
\end{equation}
\end{lemma}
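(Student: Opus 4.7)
The plan is to reduce the lemma to a combination of the paraproduct estimate (Lemma~\ref{L:para}) and the fractional chain rule (Lemma~\ref{L:fractional}), with the time factors reabsorbed using H\"older in Lorentz spaces.

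First, I would write
\[
F(\tilde u+\tilde v)-F(\tilde u) = \int_0^1 \bigl\{ (\partial_z F)(\tilde u+\theta\tilde v)\,\tilde v + (\partial_{\bar z}F)(\tilde u+\theta\tilde v)\,\bar{\tilde v}\bigr\}\,d\theta,
\]
so up to the harmless integral in $\theta$, we are left to estimate products of the form $G_\theta\cdot\tilde v$, where $G_\theta:=(\partial+\bar\partial)F(\tilde u+\theta\tilde v)$ is H\"older continuous of order $p$ in its argument (if $p<1$) or $C^1$ (if $p\ge1$), and in either case pointwise dominated by $|\tilde u|^p+|\tilde v|^p$. Next, I would apply Lemma~\ref{L:para} with $s=\eps$ and exponents $(r_1,\rho,r_1')$ satisfying $\tfrac{d}{r_1}+\tfrac{d}{\rho}=\tfrac{d}{r_1'}+\eps$ to obtain, at fixed $t$,
\[
\bigl\||\nabla|^{-\eps}(G_\theta\tilde v)\bigr\|_{L_x^{r_1'}} \lesssim \bigl\||\nabla|^{-\eps}\tilde v\bigr\|_{L_x^{r_1}} \bigl\||\nabla|^{\eps}G_\theta\bigr\|_{L_x^{\rho}}.
\]
Multiplying by $|t|^{-\eps}$ and recalling $|t\nabla|^{-\eps}=|t|^{-\eps}|\nabla|^{-\eps}$, the first factor on the right is exactly $\||t\nabla|^{-\eps}\tilde v\|_{L_x^{r_1}}$.

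Then I would apply Lemma~\ref{L:fractional}(ii) to $G_\theta$ with $s=\eps$, $\alpha=p$, and $\sigma=|s_c|$, which is admissible provided $\eps<p|s_c|$ (guaranteed by taking $\eps$ small, using $|s_c|<1$ from \eqref{asmp:p}). This yields
\[
\bigl\||\nabla|^\eps G_\theta\bigr\|_{L_x^\rho}
\lesssim \bigl(\|\tilde u\|_{L_x^{\tilde r}}+\|\tilde v\|_{L_x^{\tilde r}}\bigr)^{p-\eps/|s_c|}\,\bigl(\||\nabla|^{|s_c|}\tilde u\|_{L_x^{r_2}}+\||\nabla|^{|s_c|}\tilde v\|_{L_x^{r_2}}\bigr)^{\eps/|s_c|},
\]
where I choose the exponents in the chain rule so that the first factor lives in $L_x^{\tilde r}$ and the second in $L_x^{r_2}$; the scaling relations in \eqref{scaling} (specifically $\tfrac{d}{r_2}-\tfrac{d}{\tilde r}=|s_c|$) are exactly what makes the paraproduct constraint $\tfrac{d}{\rho}=\tfrac{p}{\tilde r}+\tfrac{\eps}{d}$ hold. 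For $p\ge1$, I would use Lemma~\ref{L:fractional}(i) combined with H\"older in the same way. Rewriting $\||\nabla|^{|s_c|}\tilde u\|_{L_x^{r_2}}\sim|t|^{-|s_c|}\|u(t)\|_{\dot X^{|s_c|,r_2}}$ via \eqref{def:Xn} produces an extra factor of $|t|^{-\eps}$.

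Finally, I would take $L_t^{q_1',2}$ and apply H\"older in Lorentz spaces (Lemma~\ref{lem:gH}) splitting the time exponent as
\[
\tfrac{1}{q_1'} = \tfrac{1}{q_1} + \eps + \tfrac{p-\eps/|s_c|}{\tilde q} + \tfrac{\eps/|s_c|}{q_2},
\]
which holds by \eqref{scaling} since $\tfrac{1}{\tilde q}-\tfrac{1}{q_2}=|s_c|$ and $\tfrac{1}{q_1'}-\tfrac{1}{q_1}=\tfrac{p}{\tilde q}$. The factor $|t|^{-\eps}$ contributes $\||t|^{-\eps}\|_{L_t^{1/\eps,\infty}}\lesssim 1$; the $L^{\tilde r}$ factor is controlled by the embedding $W\hookrightarrow S^{\mathrm{weak}}=L_t^{\tilde q,\infty}L_x^{\tilde r}$ from Lemma~\ref{L:embeddings}; and the remaining factor is directly the $L_t^{q_2,2}\dot X^{|s_c|,r_2}$ part of $W$. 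This yields \eqref{E:parap}.

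The main obstacle I anticipate is the bookkeeping of $|t|$-factors: each time one passes from $|\nabla|^{|s_c|}\tilde u$ to the $\dot X^{|s_c|,r}$ norm one introduces a $|t|^{-|s_c|}$, and the total power of $|t|^{-1}$ that appears outside the $L_t$-norm must integrate to a dimensionless constant in a Lorentz sense; ensuring this forces the choice $\sigma=|s_c|$ in the chain rule and explains why only $\eps$-small values are allowed. The other technical point is handling the $p<1$ and $p\ge1$ cases uniformly through Lemma~\ref{L:fractional}.
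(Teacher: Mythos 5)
Your proposal is correct and follows essentially the same route as the paper's proof: the Fundamental Theorem of Calculus decomposition, the pointwise-in-time application of Lemma~\ref{L:para}, the H\"older-continuous fractional chain rule (Lemma~\ref{L:fractional}(ii)) with $\sigma=|s_c|$ (resp.\ part (i) plus H\"older/Sobolev when $p\ge 1$), and the absorption of the resulting $|t|$-weights via \eqref{def:Xn}, \eqref{scaling}, and H\"older in Lorentz spaces. The only blemish is a harmless typo: the paraproduct constraint should read $\tfrac{1}{\rho}=\tfrac{p}{\tilde r}+\tfrac{\eps}{d}$ (equivalently $\tfrac{d}{\rho}=\tfrac{dp}{\tilde r}+\eps$), not $\tfrac{d}{\rho}=\tfrac{p}{\tilde r}+\tfrac{\eps}{d}$.
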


\begin{proof} Let $\tilde u_\theta := \tilde u+\theta\tilde v$.  By the Fundamental Theorem of Calculus, we have
$$
F(\tilde u+\tilde v) - F(\tilde u) = \int_0^1 G_1(\tilde u_\theta) \tilde v + G_2(\tilde u_\theta) \overline{\tilde v} \,d\theta,
$$
where
$$
G_1(z)= \tfrac{p+2}{2} |z|^p \qtq{and} G_2(z)= \tfrac{p z^2}{2 |z|^2} |z|^p .
$$
Notice that both are H\"older continuous of order $p$ when $p<1$.

By Lemma~\ref{L:para} and \eqref{scaling}, we can estimate
\begin{align*}
\text{LHS\eqref{E:parap}} \lesssim \| |t \nabla|^{-\eps} \tilde v\|_{L_t^{q_1,2}L_x^{r_1}}
	\int_0^1 \sum_{k=1}^2 \ \bigl\| |\nabla|^{\eps} G_k(\tilde u_\theta) \bigr\|_{L_t^{\frac{\tilde q}{p},\infty} L_x^{\gamma}} \ d\theta
\end{align*}
for $\eps>0$ small enough, where we take $\tfrac{d}{\gamma}=\tfrac{d}{r_1'} - \tfrac{d}{r_1} + \eps.$

If $p\geq 1$, we use the fractional chain rule, Lemma~\ref{L:embeddings} (and its proof), \eqref{scaling}, and Sobolev embedding to estimate
\begin{align*}
\| |\nabla|^\eps G_k(\tilde u_\theta)\|_{L_t^{\frac{\tilde q}{p},\infty} L_x^{\gamma}}
&\lesssim  \|\tilde u_\theta\|_{L_t^{\tilde q,\infty}L_x^{\tilde r}}^{p-1} \||\nabla|^{\eps}\tilde u_\theta\|_{L_t^{\tilde q,\infty} L_x^{\tilde\gamma}}  \\
&\lesssim \|u_\theta \|_{W}^{p-1} \| |\nabla|^{|s_c|}\tilde u_\theta\|_{L_t^{\tilde q,\infty} L_x^{r_2}}  \lesssim \|u_\theta \|_{W}^p,
\end{align*}
where we take $\tfrac{1}{\tilde\gamma} = \tfrac{1}{\gamma}-\tfrac{p-1}{\tilde r}.$

If $p<1$, we define $\gamma_1$ so that $\tfrac{1}{\gamma_1}  = (p-\tfrac{\eps}{|s_c|})\tfrac{1}{\tilde r}$.  For $\eps>0$ small enough, using the fractional chain rule for H\"older continuous functions and \eqref{scaling}, and estimating as above, we find
\begin{align*}
\| |\nabla|^{\eps}G_k(\tilde u_\theta) \|_{L_t^{\frac{\tilde q}{p},\infty} L_x^{\gamma}} & \lesssim \bigl\| \,\||\tilde u_\theta|^{p-\frac{\eps}{|s_c|}}\|_{L_x^{\gamma_1}} \||\nabla|^{|s_c|}\tilde u_\theta\|_{L_x^{r_2}}^{\frac{\eps}{|s_c|}}\bigr\|_{L_t^{\frac{\tilde q}{p},\infty}}\lesssim \|u_\theta\|_W^p.
\end{align*}
This completes the proof.
\end{proof}

\subsection{Local well-posedness and stability}  In this subsection, we review the local theory for \eqref{nls} under the assumption \eqref{asmp:p}.  The results presented are consequences of Strichartz (Proposition~\ref{prop:Strichartz}), together with the estimates of the previous subsection (Lemma~\ref{L:embeddings} and Lemma~\ref{L:nonlinear}).  For details, see \cite{NakOza, Mas2}.  

The first result gives local existence and uniqueness in $C_t \dot X^{|s_c|}\cap W$. 

\begin{theorem}[Local well-posedness] There exists $\delta>0$ such if $t_0\in\R$ and $u_0\in\dot X^{|s_c|}(t_0)$ satisfies 
\[
\|e^{i(t-t_0)\Delta}u_0\|_{W(I)}\leq\delta,
\] then there exists a unique solution $u\in C(I;\dot X^{|s_c|})\cap W(I)$ to \eqref{nls} with $u(t_0)=u_0$ and
\[
\|u\|_{W(I)}\lesssim\|e^{i(t-t_0)\Delta}u_0\|_{W(I)}.
\]

Similarly, if $u_0\in\fhsc$ and $I=(-\infty,T)$ is an interval such that 
\[
\|e^{it\Delta}u_0\|_{W(I)}\leq \delta,
\] 
then there exists a unique solution $u\in C(I;\dot X^{|s_c|})\cap W(I)$ to \eqref{nls} with data $u_0$ at $t_0=-\infty$ satisfying $\|u\|_{W(I)}\lesssim\|e^{it\Delta}u_0\|_{W(I)}.$
\end{theorem}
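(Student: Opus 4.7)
The plan is to prove both parts by a standard contraction-mapping argument on the Duhamel map
\[
\Phi(u)(t) := e^{i(t-t_0)\Delta}u_0 - i\mu\int_{t_0}^t e^{i(t-s)\Delta}\bigl(|u|^p u\bigr)(s)\,ds,
\]
where in the $t_0=-\infty$ case the integral is taken from $-\infty$ and the linear term is $e^{it\Delta}u_0$. The complete metric space will be the ball
\[
B := \bigl\{u\in W(I): \|u\|_{W(I)} \le 2\delta\bigr\},
\]
equipped with the metric induced by the $W(I)$-norm, which is complete. Note that under the hypothesis of the theorem, the linear flow already satisfies $\|e^{i(t-t_0)\Delta}u_0\|_{W(I)}\le\delta$ (and likewise in the $-\infty$ case).

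To check that $\Phi$ maps $B$ to itself, I apply the inhomogeneous Strichartz estimate (Proposition~\ref{prop:Strichartz}(2)) with the pairs $(q_j,r_j)$ used in the definition of $W$ and $N$, together with the nonlinear estimate of Lemma~\ref{L:nonlinear} and the embedding $\|u\|_{S^{\mathrm{weak}}}\lesssim\|u\|_W$ from Lemma~\ref{L:embeddings}. This yields
\[
\|\Phi(u)\|_{W(I)} \le \|e^{i(t-t_0)\Delta}u_0\|_{W(I)} + C\bigl\||u|^p u\bigr\|_{N(I)} \le \delta + C\|u\|_{W(I)}^{p+1},
\]
so $\|\Phi(u)\|_{W(I)}\le\delta + C(2\delta)^{p+1}$; choosing $\delta$ so that $C(2\delta)^{p+1}\le\delta$ ensures $\Phi(B)\subset B$. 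For the contraction estimate I need a Lipschitz version of Lemma~\ref{L:nonlinear}: using the Fundamental Theorem of Calculus as in the proof of Lemma~\ref{L:parap} (with $G_1,G_2$ replacing the nonlinearity) together with the fractional chain rule (Lemma~\ref{L:fractional}(i) for $p\ge 1$, and (ii) for $p<1$) and H\"older with the scaling relations \eqref{scaling}, I obtain
\[
\bigl\||u|^pu - |v|^pv\bigr\|_{N(I)} \lesssim \bigl(\|u\|_{W(I)}^p + \|v\|_{W(I)}^p\bigr)\|u-v\|_{W(I)}.
\]
For $u,v\in B$ this gives $\|\Phi(u)-\Phi(v)\|_{W(I)}\le C\delta^p\|u-v\|_{W(I)}$, which is a strict contraction once $\delta$ is small enough. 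The Banach fixed-point theorem then yields a unique $u\in B$ with $\Phi(u)=u$, and the estimate $\|u\|_{W(I)}\lesssim\|e^{i(t-t_0)\Delta}u_0\|_{W(I)}$ falls out of the fixed-point equation and the same Strichartz/nonlinear bounds.

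It remains to upgrade to $u\in C(I;\dot X^{|s_c|})$. The homogeneous Strichartz estimate (Proposition~\ref{prop:Strichartz}(1)) bounds $\|e^{i(t-t_0)\Delta}u_0\|_{L_t^\infty\dot X^{|s_c|}}$, while the inhomogeneous part is controlled in the same norm by $\||u|^pu\|_{N(I)}$; continuity of $t\mapsto e^{-it\Delta}u(t)$ in $\mathcal{F}\dot H^{|s_c|}$ then follows from the Duhamel identity and dominated convergence, and the solution satisfies Definition~\ref{D:solution}. The case $t_0=-\infty$ is essentially identical: the interpretation $\|e^{-it\Delta}u(t)-u_0\|_{\mathcal{F}\dot H^{|s_c|}}\to 0$ as $t\to-\infty$ follows from the Duhamel formula once the nonlinear term is shown to vanish in that limit, which comes from the absolute continuity of the Strichartz norm together with the smallness hypothesis on $I=(-\infty,T)$. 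The main obstacle I anticipate is the Lipschitz estimate when $p<1$, since the nonlinearity is merely H\"older of order $p$; here Lemma~\ref{L:fractional}(ii) with the specific exponents $(q_j,r_j),(\tilde q,\tilde r)$ and the bookkeeping in \eqref{scaling} does exactly what is needed, mirroring the computation already carried out in the proof of Lemma~\ref{L:parap}.
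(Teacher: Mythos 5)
Your overall strategy (Duhamel fixed point in a small ball, self-mapping via Strichartz, Lemma~\ref{L:embeddings}, and Lemma~\ref{L:nonlinear}) matches what the paper intends --- it proves no details and refers to \cite{NakOza, Mas2} --- but your contraction step contains a genuine gap. You close the contraction in the $W(I)$-metric, which requires the Lipschitz bound $\|\,|u|^pu-|v|^pv\|_{N(I)}\lesssim(\|u\|_{W}^p+\|v\|_{W}^p)\|u-v\|_{W}$, and the $N$-norm carries the derivative $J^{|s_c|}$, i.e.\ $|s_c|$ derivatives of a \emph{difference} of the nonlinearity. For $p<1$ (which is the generic case here: for $d\ge 5$ every $p$ allowed by \eqref{asmp:p} is below $1$) the functions $G_1,G_2$ appearing after the Fundamental Theorem of Calculus are only H\"older continuous of order $p$, and Lemma~\ref{L:fractional}(ii) estimates $\||\nabla|^s G(u)\|$ only for $s<\alpha=p$. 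But in part of the admissible range one has $|s_c|>p$ (e.g.\ $d=4$, $p=0.7$ gives $|s_c|\approx 0.857$), so the tool you invoke cannot produce $\||\nabla|^{|s_c|}G_k(\tilde u_\theta)\|$ at all; moreover you would additionally need a positive-order fractional Leibniz rule, which the paper never states. This is not a bookkeeping issue: it is exactly why Lemma~\ref{L:parap} is formulated with the \emph{negative}-order weight $|t\nabla|^{-\eps}$ (so that only $\eps<p$ derivatives ever land on $G_k$), and "mirroring the computation in Lemma~\ref{L:parap}" does not give a difference estimate at regularity $|s_c|$.

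The standard repair, and the one implicit in \cite{Mas2, NakOza}, is to run the contraction in a weaker, derivative-free metric on the $W$-bounded ball: take $B=\{\|u\|_{W(I)}\le 2\delta\}$ equipped with, say, $d(u,v)=\|u-v\|_{L_t^{q_1,2}L_x^{r_1}(I\times\R^d)}$ (or the $S$-metric). Then $(B,d)$ is complete (by Fatou/weak compactness, since $W$-bounded balls are closed under this convergence), the self-mapping bound is exactly your Lemma~\ref{L:nonlinear} computation, and the contraction follows from the pointwise estimate $\bigl||u|^pu-|v|^pv\bigr|\lesssim(|u|^p+|v|^p)|u-v|$ together with H\"older in Lorentz spaces and the scaling relations \eqref{scaling}:
\begin{equation*}
\bigl\||u|^pu-|v|^pv\bigr\|_{L_t^{q_1',2}L_x^{r_1'}}\lesssim\bigl(\|u\|_{S^{\text{weak}}}^p+\|v\|_{S^{\text{weak}}}^p\bigr)\|u-v\|_{L_t^{q_1,2}L_x^{r_1}},
\end{equation*}
with no fractional chain rule needed. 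The fixed point then lies in $B$, inherits the $W$ and $C_t\dot X^{|s_c|}$ bounds as in your last paragraph, and uniqueness in the full class $C(I;\dot X^{|s_c|})\cap W(I)$ (not just in the ball) is recovered by the usual continuity/subdivision argument, a point your write-up also leaves implicit. With this modification your argument goes through and coincides with the cited proofs; as written, the contraction step fails for $p<1$.
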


We also have the following blowup/scattering criterion in terms of the $S$-norm. 

\begin{proposition}[Blowup/scattering criterion]
Let $u:I_{\max}\times\R^d\to\C$ be a maximal-lifespan solution solution to \eqref{nls}.  Suppose that $I_{\max}\ni 1$. 
\begin{itemize}
\item If $T_{\min}>-\infty$, then $\lim_{t\downarrow T_{\min}}S_{(t,1]}(u)=\infty$.  
\item If $T_{\max}<\infty$, then $\lim_{t\uparrow T_{\max}}S_{[1,t)}(u)=\infty$. 
\item If $T_{\max}=\infty$ and $S_{(1,\infty)}(u)<\infty$, then $u$ scatters in $\fhsc$ forward in time. 
\end{itemize}
\end{proposition}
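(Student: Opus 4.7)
My plan is to reduce all three claims to a single persistence statement: any a priori control on the scale-invariant norm $S_J(u)$ can be bootstrapped to control on $\|u\|_{W(J)}$, and hence by Proposition~\ref{prop:Strichartz} to a uniform bound on $\|e^{-it\Delta}u(t)\|_{\fhsc}$ over $J$. I would prove this upgrade first, then read off the three conclusions.

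For the upgrade, fix a subinterval $J\subset I_{\max}$ containing $1$ with $S_J(u)<\infty$. Because the temporal exponent $\tilde q$ in the $S$-norm is finite, the $S$-norm is absolutely continuous in the interval, so I can partition $J$ into a finite collection of consecutive subintervals $J_1,\ldots,J_K$ (with $1\in J_1$) on which $\|u\|_{S(J_k)}\le\eta$, for any prescribed small $\eta>0$. On $J_1$, Duhamel based at $t=1$, Proposition~\ref{prop:Strichartz}, and Lemmas~\ref{L:embeddings}--\ref{L:nonlinear} combine to give
\[
\|u\|_{W(J_1)} \lesssim \|e^{-i\Delta}u(1)\|_{\fhsc} + \|u\|_{S(J_1)}^p\,\|u\|_{W(J_1)}.
\]
Choosing $\eta$ small enough to absorb the nonlinear term yields $\|u\|_{W(J_1)}\lesssim\|e^{-i\Delta}u(1)\|_{\fhsc}$, and Strichartz then bounds $\sup_{t\in J_1}\|e^{-it\Delta}u(t)\|_{\fhsc}$. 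I would iterate this on $J_2,\ldots,J_K$, taking the $\fhsc$-value at the previous endpoint as new data; the bounds may grow geometrically in $K$ but remain finite.

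From here the three conclusions follow. For (i) and (ii) I argue contrapositively: if $S$ stayed finite up to the maximal endpoint, the upgrade would give $\|u\|_W$ finite on the closed subinterval, so Strichartz and Lemma~\ref{L:nonlinear} yield
\[
\|e^{-it\Delta}u(t) - e^{-i\tau\Delta}u(\tau)\|_{\fhsc} \lesssim \|u\|_{S^{\text{weak}}((t,\tau))}^p\,\|u\|_{W((t,\tau))} \to 0
\]
as $t,\tau$ approach the endpoint (the $W$-factor vanishes by absolute continuity of the finite-exponent $L_t^{q_j,2}$-norms, while the $S^{\text{weak}}$-factor stays bounded). Thus $e^{-it\Delta}u(t)$ has a limit $\psi\in\fhsc$ at that endpoint, and the local well-posedness theorem, applied with $\psi$ as data there, extends $u$ strictly past it---contradicting maximality. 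For (iii) the same Cauchy argument, now as $t,\tau\to\infty$, defines $u_+\in\fhsc$ satisfying \eqref{eq:scatter}.

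The main delicate point is the partitioning step, which relies on the Lorentz exponent $L_t^{\tilde q,2}$ in the $S$-norm rather than the weak-type $L_t^{\tilde q,\infty}$ of $S^{\text{weak}}$; this is precisely the frangibility emphasized after \eqref{strong-hyp2} as what distinguishes a genuine scattering norm from the weak-type bound that follows directly from \eqref{asmp:bdd}. A secondary subtlety is the broken time-translation symmetry (Remark~\ref{R:tt}): because the spaces $\dot X^{|s_c|}(t)$ depend on $t$ through the modulation $M(t)$, the iteration across subintervals must propagate the actual values of $e^{-it\Delta}u(t)$ at intermediate times (which lie in $\fhsc$ by Definition~\ref{D:solution}), and cannot be simplified by translating any base point to zero.
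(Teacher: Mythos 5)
Your argument is correct and is essentially the proof the paper has in mind: the paper states this proposition without proof, remarking that it follows from Proposition~\ref{prop:Strichartz} together with Lemmas~\ref{L:embeddings} and~\ref{L:nonlinear} (deferring details to \cite{NakOza, Mas2}), and your route—frangibility of the $L_t^{\tilde q,2}$-norm to get small-$S$ subintervals, absorption to bound $\|u\|_W$, then the Duhamel/Strichartz Cauchy estimate in $\fhsc$ feeding into local well-posedness to extend past a finite endpoint or to produce the scattering state—is exactly that standard argument, including the correct handling of the broken time translation via the spaces $\dot X^{|s_c|}(t)$.
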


The last two results together give global well-posedness and scattering for sufficiently small data.

We next record a stability result from \cite{Mas2}, which plays an important role in the construction and analysis of minimal blowup solutions.

\begin{proposition}[Stability, \cite{Mas2}]\label{thm:lpt}  Let $I$ be an interval and $t_0\in I$.  Let $\tilde{u}:I\times\R^d\to\C$ satisfy
\[
(i\partial_t+\Delta)\tilde u = \mu |\tilde u|^p \tilde u + \tilde e\qtq{and}\|\tilde u\|_{W(I)}\leq M
\]
for some function $\tilde{e}$ and some $M>0$.  Let $u_0\in\dot X^{|s_c|}(t_0)$, and let $u$ be defined on an interval containing $t_0$ and satisfy
\[
(i\partial_t+\Delta) u = \mu |u|^p  u + e, \quad u(t_0)=u_0,
\]
where $e:I\times\R^d\to\C$.  There exists $\eps_1=\eps_1(M)$ such that if 
\[
\|\tilde u(t_0)-u_0\|_{\dot X^{|s_c|}(t_0)}+ \|e\|_{N(I)} + \|\tilde e\|_{N(I)} < \eps
\]
for some $0<\eps<\eps_1$, then $u$ is defined for all $t\in I$ and satisfies
\[
\|u-\tilde u\|_{L_t^\infty \dot X^{|s_c|}(I\times\R^d)\cap W(I)}\lesssim_{M} \eps^\beta\qtq{for some}\beta\in(0,1].
\] 
\end{proposition}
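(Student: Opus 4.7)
The plan is to set up and iteratively control the difference $w := u - \tilde u$, which satisfies
\[
(i\partial_t+\Delta)w = \mu\bigl[F(\tilde u+w)-F(\tilde u)\bigr] + (e-\tilde e), \qquad w(t_0)=u_0-\tilde u(t_0),
\]
where $F(z)=|z|^pz$. Applying Duhamel and the Strichartz estimates of Proposition~\ref{prop:Strichartz} on a subinterval $J\ni t_0$ gives
\[
\|w\|_{L_t^\infty\dot X^{|s_c|}\cap W(J)} \lesssim \|w(t_0)\|_{\dot X^{|s_c|}(t_0)} + \bigl\|F(\tilde u+w)-F(\tilde u)\bigr\|_{N(J)} + \|e\|_{N(I)}+\|\tilde e\|_{N(I)}.
\]
Thus the scheme is: (i) prove a difference analog of the nonlinear estimate Lemma~\ref{L:nonlinear}; (ii) partition $I$ into a controlled number of subintervals on which $\tilde u$ is small in $W$; (iii) bootstrap on each piece and iterate.

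For step (i), I would establish
\[
\bigl\|F(\tilde u+w)-F(\tilde u)\bigr\|_{N(J)} \lesssim \bigl(\|\tilde u\|_{W(J)}+\|w\|_{W(J)}\bigr)^{p}\|w\|_{W(J)}.
\]
When $p\geq 1$ this follows from the Fundamental Theorem of Calculus, writing $F(\tilde u+w)-F(\tilde u)=\int_0^1[G_1(\tilde u_\theta)w+G_2(\tilde u_\theta)\bar w]\,d\theta$ as in the proof of Lemma~\ref{L:parap}, and then repeating the fractional chain rule + H\"older + Sobolev chain used in Lemma~\ref{L:nonlinear}. When $p<1$, the integrands $G_k$ are only H\"older of order $p$, so one instead splits the fractional derivative via Lemma~\ref{L:fractional}(ii), exactly as in Lemma~\ref{L:parap}. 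The embeddings of Lemma~\ref{L:embeddings} convert the weak-type terms appearing in Lemma~\ref{L:nonlinear} to norms in $W$, which is what the iteration will need.

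For steps (ii)--(iii), since $\|\tilde u\|_{W(I)}\le M$, I would subdivide $I$ into $J=J(M,\eta)$ consecutive subintervals $I_j$ on which $\|\tilde u\|_{W(I_j)}\le\eta$ for an absolute $\eta$ to be chosen. On the $j$-th subinterval, starting from $t_j$ with data $w(t_j)$, the Strichartz bound and the difference nonlinear estimate yield
\[
\|w\|_{W(I_j)} \le C\Bigl[\|w(t_j)\|_{\dot X^{|s_c|}} + \bigl(\eta^p+\|w\|_{W(I_j)}^p\bigr)\|w\|_{W(I_j)} + \|e\|_{N(I)}+\|\tilde e\|_{N(I)}\Bigr].
\]
Choosing $\eta$ so that $C\eta^p\le\tfrac14$, a standard continuity/bootstrap argument (provided the right-hand side is initially small enough) gives
\[
\|w\|_{W(I_j)}+\|w\|_{L_t^\infty\dot X^{|s_c|}(I_j)} \le 2C\bigl[\|w(t_j)\|_{\dot X^{|s_c|}} + \|e\|_N+\|\tilde e\|_N\bigr].
\]
Passing $w(t_{j+1})$ as the initial datum on $I_{j+1}$ and iterating produces a bound of the form $\|w\|_{W(I)\cap L_t^\infty\dot X^{|s_c|}(I)}\le (2C)^{J}(\eps+\|e\|_N+\|\tilde e\|_N)\lesssim_M \eps$. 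The hypothesis $\eps<\eps_1(M)$ is precisely what guarantees the bootstrap remains valid through all $J$ steps.

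The main obstacle is the bootstrap closing at every step of the iteration when $p<1$: the nonlinear difference estimate is controlled by a $p$-th H\"older term $\|w\|_W^p\|w\|_W$, so the cumulative constant $(2C)^J$ depends sharply on $M$, and one must be careful that the smallness inherited by $w$ at $t_j$ is not consumed too quickly. This is where the power $\eps^\beta$ with $\beta\in(0,1]$ enters: in the $p\ge 1$ case one obtains $\beta=1$ directly from the Lipschitz difference estimate, while in the $p<1$ case the H\"older-type fractional chain rule forces $\beta$ to depend on $p$, and one must interpolate via Lemma~\ref{L:embeddings} and \eqref{scaling} to exchange the sub-linear gain for the desired power of $\eps$ at the cost of a $M$-dependent constant.
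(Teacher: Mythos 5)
This proposition is quoted from \cite{Mas2}; the paper under review gives no proof of it, so your argument must stand on its own. Its skeleton (Duhamel plus Strichartz for the difference, subdivision of $I$ into $O_M(1)$ subintervals where $\|\tilde u\|_W$ is small, bootstrap and iterate) is indeed the standard stability scheme. The genuine gap is your step (i): the asserted difference estimate
\[
\bigl\|F(\tilde u+w)-F(\tilde u)\bigr\|_{N(J)} \lesssim \bigl(\|\tilde u\|_{W(J)}+\|w\|_{W(J)}\bigr)^{p}\|w\|_{W(J)}
\]
is exactly the step that cannot be obtained ``exactly as in Lemma~\ref{L:parap}'' when $p<1$. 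The $N$-norm carries a full $|s_c|$ derivatives (through $J^{|s_c|}$), so after the Fundamental Theorem of Calculus the Leibniz-type term in which the derivatives fall on $G_k(\tilde u_\theta)\sim|\tilde u_\theta|^p$ requires $\||\nabla|^{|s_c|}G_k(\tilde u_\theta)\|$, and Lemma~\ref{L:fractional}(ii) (Hölder chain rule with $\alpha=p$) is only applicable for $s<\alpha$, i.e.\ it needs $|s_c|<p$. But \eqref{asmp:p} does not guarantee this: e.g.\ for $d=4$ and $p\in(\tfrac23,\sqrt3-1)$ one has $p<|s_c|<1$, and similarly near the lower endpoint $p=\tfrac4{d+2}$ in higher dimensions, where $|s_c|$ is close to $1$ while $p<1$. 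This is precisely why Lemma~\ref{L:parap} in the paper is formulated with the \emph{negative}, small derivative $|t\nabla|^{-\eps}$ on the difference and uses the paraproduct estimate so that only $\eps<p\sigma$ derivatives ever land on the Hölder-continuous factor $G_k$; it does not yield, and cannot be upgraded by the cited tools to, a Lipschitz difference estimate at the full regularity $|s_c|$.

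Because of this, your iteration as written does not close in the regime $p<1$, $|s_c|\ge p$, and it also never actually produces the Hölder exponent $\beta$: your scheme outputs $\|w\|\lesssim_M\eps$ (i.e.\ $\beta=1$), while your closing paragraph concedes $\beta<1$ without deriving it. The known way around this obstruction (and the reason the conclusion is only $\eps^\beta$) is to avoid a full-derivative difference estimate altogether: first close a bootstrap for $u-\tilde u$ in a derivative-free critical norm such as $S$ (or in a reduced-derivative space in the spirit of Lemma~\ref{L:parap}), where $F$ is genuinely Lipschitz pointwise, $|F(u)-F(\tilde u)|\lesssim(|u|^p+|\tilde u|^p)|u-\tilde u|$; then control $u$ itself in $W$ via the local theory and Lemma~\ref{L:nonlinear}; and finally recover smallness of $u-\tilde u$ in $L_t^\infty\dot X^{|s_c|}\cap W$ by interpolating between the derivative-free smallness and the uniform $W$-bounds, which is where the loss $\beta<1$ (and its dependence on $p$, $s_c$) enters. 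As it stands, your proof is complete only in the case $p\ge1$ (so $d\le3$), where $G_k\in C^1$ and Lemma~\ref{L:fractional}(i) applies with $s=|s_c|\le1$ and one indeed gets $\beta=1$.
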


We typically apply Proposition~\ref{thm:lpt} with $\tilde e \neq 0$ and $e\equiv 0$, using an approximate solution to deduce information about a true solution.  In Lemma~\ref{lem:unJ}, we do the opposite: we apply Proposition~\ref{thm:lpt} with $\tilde e \equiv 0$ and $e\neq 0$, using a true solution to deduce information about an approximate solution.

We also record the following corollary.

\begin{corollary}\label{C:stability} Let $u:I_{\max}\times\R^d\to\C$ be a maximal-lifespan solution to \eqref{nls}, $t_0\in I_{\max}$, and $I\ni t_0$.  Suppose
\begin{equation}\label{C:stabasmp}
\| e^{-it_0\Delta} u(t_0) \|_{\fhsc} \leq M \qtq{and} \|e^{i(t-t_0)\Delta} u(t_0)\|_{S(I)} < \eps
\end{equation}
for some $M, \eps>0$. If $\eps$ is sufficiently small, then $I\subset I_{\max}$ and
\begin{equation}\label{C:stabconc}
\|u-e^{i(t-t_0)\Delta}u(t_0)\|_{L_t^\infty \dot X^{|s_c|}(I\times\R^d)\cap W(I)} \lesssim_M \eps^\beta\qtq{for some}\beta>0.
\end{equation}
\end{corollary}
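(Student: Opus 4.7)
The plan is to apply the stability result, Proposition~\ref{thm:lpt}, with the linear evolution $\tilde u(t) := e^{i(t-t_0)\Delta}u(t_0)$ serving as the approximate solution and the genuine solution $u$ as the exact solution. Since $\tilde u$ solves the free Schr\"odinger equation, it obeys
\[
(i\partial_t + \Delta)\tilde u = \mu|\tilde u|^p\tilde u + \tilde e, \qquad \tilde e := -\mu|\tilde u|^p\tilde u,
\]
and of course $\tilde u(t_0) = u(t_0)$, so the data difference vanishes and one may take $e \equiv 0$ for the true solution. The only nontrivial hypotheses of Proposition~\ref{thm:lpt} left to verify are an \emph{a priori} $W$-bound on $\tilde u$ and smallness of $\|\tilde e\|_{N}$.

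For the $W$-bound, Strichartz (Proposition~\ref{prop:Strichartz}) applied to the free evolution yields
\[
\|\tilde u\|_{W(I)} = \|e^{it\Delta}(e^{-it_0\Delta}u(t_0))\|_{W(I)} \lesssim \|e^{-it_0\Delta}u(t_0)\|_{\fhsc} \leq M,
\]
so one may use $M' = CM$ as the $W$-size parameter in the stability statement, producing a threshold $\eps_1(CM)$. For the error term, I would chain the nonlinear estimate (Lemma~\ref{L:nonlinear}), the embedding $S \hookrightarrow S^{\text{weak}}$ (Lemma~\ref{L:embeddings}), and the hypothesis \eqref{C:stabasmp}:
\[
\|\tilde e\|_{N(I)} \lesssim \|\tilde u\|_{S^{\text{weak}}(I)}^{p}\,\|\tilde u\|_{W(I)} \lesssim \|\tilde u\|_{S(I)}^p \cdot M \lesssim_M \eps^p.
\]

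Choosing $\eps$ small enough (in a manner depending only on $M$) that $C_M \eps^p < \eps_1(CM)$, Proposition~\ref{thm:lpt} applies. Its conclusion is that the solution extends to all of $I$---combined with the uniqueness clause in local well-posedness, this forces $I \subset I_{\max}$---and it yields
\[
\|u - \tilde u\|_{L_t^\infty \dot X^{|s_c|}(I\times\R^d)\cap W(I)} \lesssim_M \eps^{p\beta_0}
\]
for the $\beta_0 \in (0,1]$ provided by stability, establishing \eqref{C:stabconc} with $\beta := p\beta_0$.

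There is no serious obstacle here: the corollary is essentially a repackaging of stability in which the "approximate" solution is the linear flow of the initial data itself. The one conceptually relevant point is that the hypothesis is phrased with the $S$-norm (the natural scattering norm), while the nonlinear estimate in Lemma~\ref{L:nonlinear} is stated with the slightly weaker $S^{\text{weak}}$-norm; Lemma~\ref{L:embeddings} bridges this gap at no cost, so the smallness assumption feeds directly into the error bound needed by Proposition~\ref{thm:lpt}.
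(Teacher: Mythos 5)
Your proof matches the paper's argument essentially line for line: take $\tilde u(t) = e^{i(t-t_0)\Delta}u(t_0)$ as the approximate solution (so $\tilde e = -\mu|\tilde u|^p\tilde u$, $e\equiv 0$, and the data agree at $t_0$), bound $\|\tilde u\|_{W(I)}\lesssim M$ by Strichartz, bound $\|\tilde e\|_{N(I)}\lesssim M\eps^p$ via Lemma~\ref{L:nonlinear} and the embedding $S\hookrightarrow S^{\text{weak}}$, and then invoke Proposition~\ref{thm:lpt} together with uniqueness. This is correct and is the same approach the paper takes.
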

\begin{proof} We will apply Proposition~\ref{thm:lpt} on $I$, taking $\tilde u(t) := e^{i(t-t_0)\Delta}u(t_0)$ as an approximate solution.  By Strichartz and \eqref{C:stabasmp}, we have
\[
\|\tilde u\|_{W(I)} \lesssim M.
\] 
Furthermore, by Lemma~\ref{L:nonlinear}, Lemma~\ref{L:embeddings}, and \eqref{C:stabasmp}, $\tilde u$ solves \eqref{nls} up to an error that is bounded by
\[
\|\, |\tilde u|^p \tilde u\|_{N(I)} \lesssim \|\tilde u\|_{S(I)}^p \|\tilde u\|_{W(I)} \lesssim M\eps^p. 
\] 
Moreover, $\tilde u(t_0)=u(t_0)$.  Taking $\eps$ sufficiently small, we can invoke Proposition~\ref{thm:lpt} and uniqueness to conclude that $I\subset I_{\max}$ and that the bound \eqref{C:stabconc} holds. 
\end{proof}

\subsection{Concentration compactness}  We recall the linear profile decomposition from \cite{Mas2}, which will be a crucial tool in the construction of minimal blowup solutions in Section~\ref{S:existence}. 

\begin{proposition}[Linear profile decomposition, \cite{Mas2}]\label{prop:pd}  Let $\{\phi_n\}$ be a bounded sequence in $\fhsc$. Passing to a subsequence, there exist non-zero profiles $\psi^j \in \fhsc$, parameters $\xi_n^j\in\R^d,$ $h_n^j\in(0,\infty)$, and remainders $W^J_n\in\fhsc$ such that
\begin{equation}\label{eq:pd1}
\phi_n = \sum_{j=1}^J e^{ix\xi_n^j} \psi^j_{\{ h_n^j \}} + W^J_n\qtq{for all}J\geq 1.
\end{equation}

For each $J\geq 1$, we have the decoupling
\begin{equation}\label{eq:pd3}
\norm{\phi_n}_{\fhsc}^2 = \sum_{j=1}^J \norm{\psi^j}_{\fhsc}^2  + \norm{W_n^J}_{\fhsc}^2 + o(1)\qtq{as}n\to\infty.
\end{equation}

The parameters are asymptotically orthogonal: if $j\neq k$, then
\begin{equation}\label{eq:pd2}
\tfrac{h_n^j}{h_n^k} + \tfrac{h_n^k}{h_n^j} + \tfrac{|\xi^j_n-\xi^k_n|}{h_n^j} \to \infty\qtq{as}n\to\infty.
\end{equation}

Finally, the remainders satisfy
\begin{equation}\label{eq:pd5}
(e^{-ix\xi_n^j} W_n^J)_{\{\frac{1}{h_n^j}\}} \rightharpoonup 0 \qtq{weakly in}\fhsc\qtq{as}n\to\infty
\end{equation}
for all $1\leq j\leq J$ and vanish in Strichartz norms:
\begin{equation}\label{eq:pd4}
\liminf_{J\to\infty} \limsup_{n\to\infty} \norm{e^{it\Delta}W^J_n}_{L_t^{q,\infty}L_x^r(\R\times\R^d)} =0
\end{equation}
 for any $1<q,r<\infty$ such that $\tfrac{1}{q}\in(s_c,s_c+\tfrac12)$ and $\tfrac{2}{q}+\tfrac{d}{r}=\tfrac{2}{p}.$
\end{proposition}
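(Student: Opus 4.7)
The plan is to prove Proposition~\ref{prop:pd} by the standard inverse-Strichartz bubble-extraction procedure, adapted to the group $G$ of noncompact symmetries that preserve both $\fhsc$ and the linear Schr\"odinger flow. Since translation in space and time fails to preserve $\fhsc$, the only relevant symmetries are Galilei boosts $f \mapsto e^{ix\xi_0} f$ and the critical scaling \eqref{eq:scale1.5}, so each profile carries a single pair $(\xi_n^j, h_n^j) \in \R^d \times (0,\infty)$ and no time- or space-translation parameter.

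The core step is an inverse-Strichartz lemma: if $\{\phi_n\}$ is bounded in $\fhsc$ and $\limsup_n \|e^{it\Delta}\phi_n\|_{L_t^{q,\infty}L_x^r} \geq \eps$ for some $(q,r)$ as in \eqref{eq:pd4}, then, passing to a subsequence, there exist $\xi_n \in \R^d$, $h_n > 0$, and a nonzero $\psi \in \fhsc$ with
\[
\bigl(e^{-ix\xi_n}\phi_n\bigr)_{\{1/h_n\}} \rightharpoonup \psi \quad\text{weakly in }\fhsc, \qquad \cn{\psi} \gtrsim \eps^{\alpha}
\]
for some universal $\alpha > 0$. To prove this I would pass to the Fourier side---where $\fhsc$ becomes $\dot H^{|s_c|}$---and Littlewood--Paley decompose $\widehat\phi_n$ into dyadic frequency cubes. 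A bilinear/refined Strichartz bound then controls the Lorentz Strichartz norm by the product of a standard $\fhsc$ Strichartz norm and $\sup_Q |Q|^{\gamma}\|\widehat{P_Q\phi_n}\|_{L^\infty}$ for a suitable exponent $\gamma$; the constraint on $(q,r)$ in \eqref{eq:pd4} is exactly what permits this interpolation against Proposition~\ref{prop:Strichartz}. A cube $Q_n$ near-saturating the supremum then yields $\xi_n$ (its center) and $h_n$ (its inverse sidelength), and weak compactness in $\fhsc$ extracts $\psi$, with the lower bound obtained by testing the dispersive estimate against a bump adapted to $Q_n$.

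The decomposition \eqref{eq:pd1} is built inductively: set $W_n^0 := \phi_n$, and so long as $\limsup_n \|e^{it\Delta}W_n^{J-1}\|_{L_t^{q,\infty}L_x^r}$ exceeds a chosen threshold $\eps_J$, apply the lemma to $W_n^{J-1}$ to produce $(\xi_n^J, h_n^J, \psi^J)$ and define
\[
W_n^J := W_n^{J-1} - e^{ix\xi_n^J}\psi^J_{\{h_n^J\}}.
\]
Since $\fhsc$ is a Hilbert space on which $G$ acts unitarily, the Pythagorean identity combined with the weak convergence \eqref{eq:pd5} at each stage immediately gives the energy decoupling \eqref{eq:pd3}; in particular $\sum_{j}\cn{\psi^j}^2 \leq \limsup_n \cn{\phi_n}^2 < \infty$. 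Asymptotic orthogonality \eqref{eq:pd2} is proved by contradiction: if some pair $(j,k)$, $j \neq k$, failed \eqref{eq:pd2} along a subsequence, then $(g_n^j)^{-1} g_n^k$ (where $g_n^\ell$ denotes the symmetry element of the $\ell$-th profile) would converge to an element $g_\infty \in G$, and applying this limit to \eqref{eq:pd5} at the $j$-th stage forces $\psi^k = 0$, contradicting nontriviality.

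Finally, \eqref{eq:pd4} follows from the decoupling: since $\cn{\psi^J} \to 0$, the inverse-Strichartz lower bound $\cn{\psi^J} \gtrsim \eps_J^\alpha$ forces $\eps_J \to 0$ as $J \to \infty$. I expect the main obstacle to be the inverse-Strichartz step at this specific combination of weak-in-time Lorentz norm and negative critical regularity: one must execute a bilinear or Besov-type refinement in a non-endpoint setting where the Fourier-side weight $|\xi|^{|s_c|}$ is not smoothing, and verify that a single frequency cube indeed captures all concentration. The absence of space- and time-translation parameters is consistent with the hypotheses---large spatial translations would blow up the $\fhsc$ norm of $\phi_n$, while the weak-in-time character of $L_t^{q,\infty}$ prevents time-translation concentration---but must still be carefully justified at the level of the refined Strichartz estimate.
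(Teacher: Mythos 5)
First, note that the paper does not prove Proposition~\ref{prop:pd} at all: it is imported verbatim from \cite{Mas2}. Your outline is essentially the route taken there — conjugate by the Fourier transform so that boundedness in $\fhsc$ becomes boundedness in $\dot H^{|s_c|}$, extract bubbles via a cube-refined Strichartz estimate in the spirit of \cite{Bou2, MerVeg, BegVar, CarKer}, use that scaling and modulation act unitarily on $\fhsc$ to get the one-step Pythagorean decoupling \eqref{eq:pd3} and \eqref{eq:pd5}, prove \eqref{eq:pd2} by the usual contradiction argument, and deduce \eqref{eq:pd4} from square-summability of the profile norms. Your justification for the absence of space- and time-translation parameters (either would destroy $\fhsc$-boundedness, and the admissible exponent window does the rest) is also the correct one.

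The one step that would fail as written is the refined estimate driving the inverse-Strichartz lemma. You propose to interpolate the Strichartz bound of Proposition~\ref{prop:Strichartz} against $\sup_Q |Q|^{\gamma}\|\widehat{P_Q\phi_n}\|_{L^\infty}$; but $\widehat{\phi_n}$ lies only in $\dot H^{|s_c|}$ with $|s_c|<\tfrac d2$, so its restriction to a cube is generically not in $L^\infty$, and that supremum is typically infinite, making the proposed inequality vacuous. The usable refinements quantify concentration by scale-normalized \emph{integral} averages of $\widehat{\phi_n}$ over cubes (an $X_p$/Morrey-type quantity as in \cite{BegVar, CarKer}, which is the form of the refinement used in \cite{Mas2}); the constraint $\tfrac1q\in(s_c,s_c+\tfrac12)$ is precisely what allows this interpolation, and with that replacement the near-extremal cube's center and sidelength do produce $\xi_n$ and $h_n$ as you describe, with the lower bound on $\cn{\psi}$ obtained by pairing against a bump adapted to the cube. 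Two smaller points: the extraction is run with one fixed pair $(q,r)$, so to obtain \eqref{eq:pd4} for \emph{every} admissible pair you should diagonalize over a countable family or interpolate along the scaling line $\tfrac2q+\tfrac dr=\tfrac2p$ using the uniform $\fhsc$ bounds; and \eqref{eq:pd5} for $j$ strictly less than $J$ is not automatic from the construction — it uses \eqref{eq:pd2}, so the orthogonality and the weak-vanishing statements must be established together by induction on the number of profiles rather than cited independently of one another.
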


\section{Existence of minimal blowup solutions}\label{S:existence}

In this section, we prove that if Theorem~\ref{T:main} fails, then we can construct minimal blowup solutions with a good compactness property, namely, almost periodicity modulo the symmetries of the equation; see Theorem~\ref{thm:apsol}.  For an introduction to the techniques we will be using, we refer the reader to \cite{KV1, Vis3}.  

In this paper, we consider for the first time almost periodic solutions in the mass-subcritical setting.  Compared to the case of non-negative critical regularity, we prove that it is the orbit of the interaction variable $f(t)=e^{-it\Delta}u(t)$ that is almost periodic, rather than the orbit of the solution $u$ itself.  Because of this, it turns out to be prudent not to attack Theorem~\ref{T:main} directly, but to first recast it in a form more amenable to analysis; this is the role of Theorem~\ref{P:main}.

\begin{definition}[Almost periodic]  Let $u:I\times\R^d\to\C$  and define $f(t):=e^{-it\Delta}u(t)$.  We say $u$ is \emph{almost periodic} (modulo symmetries) on $I$ if 
\[
f \in L_t^\infty \fhsc(I\times\R^d)
\] 
and there exist $h:I \to (0,\infty)$, $\xi:I \to \R^d$, and $C:(0,\infty) \to (0,\infty)$ such that
\begin{equation}\label{eq:ap1}
\int_{|x|\le \frac{C(\eta)}{h(t)}} \bigl||x|^{|s_c|}f(t)\bigr|^2\, dx + \int_{|\xi - \xi(t)| \ge {C(\eta)}{h(t)}} \bigl||\nabla|^{|s_c|} \widehat{f}(t)\bigr|^2 \,d\xi \le \eta\qtq{for all}t\in I.
\end{equation}
We call $\xi(t)$ the \emph{frequency center}, $h(t)$ the \emph{frequency scale}, and $C(\eta)$ the \emph{compactness modulus}. 
\end{definition}

\begin{remark}\label{rmk:ap} By Arzel\`a--Ascoli, $u$ is almost periodic if and only if
\[
K:= \bigl\{ \bigl(e^{-ix\xi(t)} e^{-it \Delta} u(t)\bigr)_{\{\frac{1}{h(t)}\}}: t \in I \bigr\}
\]
is pre-compact in $\fhsc$. This also implies that
\begin{equation}\label{eq:ap3}
e^{-it\Delta} u(t,x) = h(t)^{\frac{2}{p}} e^{ix\xi(t)} \psi(t,h(t) x) \qtq{for some} \psi:I \to K,
\end{equation}
that is, the orbit of $f(t)=e^{-it\Delta}u(t)$ is pre-compact in $\fhsc$ modulo scaling and Galilei boosts.
\end{remark}

The goal of this section is to establish the following. 

\begin{theorem}[Reduction to almost periodic solutions]\label{thm:apsol}  Suppose Theorem~\ref{T:main} fails.  Then there exists a maximal-lifespan solution $u$ to \eqref{nls} with $1\in I_{\max}$ such that
\begin{itemize}
\item[(i)] $u$ does not scatter forward in time,
\item[(ii)] $u$ is almost periodic on $[1,T_{\max})$. 
\end{itemize}
Furthermore, the frequency scale satisfies $h(t)\lesssim_u t^{-\frac12}$. 
\end{theorem}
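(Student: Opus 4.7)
My plan is to run the standard concentration-compactness reduction to a minimal counterexample. The main difference from the classical setting is that the lack of time-translation symmetry (Remark~\ref{R:tt}) must be compensated by the scaling symmetry~\eqref{eq:scale1}, which I use to pin the reference time at $t=1$. By~\eqref{eq:scale1}, any counterexample may be rescaled so that $1\in I_{\max}$. Defining
\[
L(E):=\sup\bigl\{S_{[1,T_{\max}(u))}(u):\ 1\in I_{\max}(u),\ \sup\nolimits_{t}\norm{e^{-it\Delta}u(t)}_{\fhsc}\leq E\bigr\},
\]
the small-data theory gives $L(E)<\infty$ for small $E$; failure of Theorem~\ref{T:main} then forces a critical threshold $E_c\in(0,\infty)$. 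Pick counterexamples $u_n$ with $\sup_t\norm{e^{-it\Delta}u_n(t)}_{\fhsc}\to E_c$ and $S_{[1,T_{\max,n})}(u_n)\to\infty$.

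\textbf{Single-profile extraction and almost periodicity.} Apply Proposition~\ref{prop:pd} to $f_n:=e^{-i\Delta}u_n(1)\in\fhsc$, obtaining profiles $\psi^j$, parameters $(\xi_n^j,h_n^j)$, and Strichartz-small remainders $W_n^J$. Each profile is promoted to a nonlinear profile $v_n^j$ solving~\eqref{nls}: when $(h_n^j)^{-2}$ stays bounded, solve a Cauchy problem at a finite time; when $(h_n^j)^{-2}\to 0$ or $\infty$, invoke the at-infinity form of local well-posedness. Were two profiles to carry positive mass, the $\fhsc$-decoupling~\eqref{eq:pd3} and sub-criticality of $L$ below $E_c$ would bound each $v_n^j$ in the $S$-norm; asymptotic $S$-orthogonality (from~\eqref{eq:pd2}) together with Proposition~\ref{thm:lpt} would then bound $S_{[1,T_{\max,n})}(u_n)$ uniformly, a contradiction. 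Hence exactly one profile survives, and absorbing $(\xi_n,h_n)$ into the symmetry group yields a maximal-lifespan solution $u$ realizing $E_c$ and satisfying~(i). Almost periodicity~(ii) follows by applying the same Palais--Smale machinery to $\{e^{-it_n\Delta}u(t_n)\}$ for any sequence $t_n\in[1,T_{\max})$: the single-profile conclusion gives pre-compactness in $\fhsc$ modulo scaling and Galilei boosts, which by Remark~\ref{rmk:ap} is exactly almost periodicity with some parameters $h(t)$ and $\xi(t)$.

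\textbf{Frequency-scale bound.} Suppose toward contradiction that $h(t_n)t_n^{1/2}\to\infty$ for some $t_n\in[1,T_{\max})$. By Remark~\ref{rmk:ap},
\[
e^{-it_n\Delta}u(t_n)=h(t_n)^{\frac{2}{p}}e^{ix\xi(t_n)}\psi(t_n,h(t_n)\,\cdot)
\]
with $\psi(t_n,\cdot)$ lying in a compact set $K\subset\fhsc$. Using~\eqref{gal} and the dilation behaviour of the Lorentz-in-time $S$-norm, a direct computation produces
\[
\norm{e^{i(t-t_n)\Delta}u(t_n)}_{S([t_n,T_{\max}))}=\norm{e^{i\tau\Delta}\psi(t_n,\cdot)}_{S([t_nh(t_n)^2,\,T_{\max}h(t_n)^2))}.
\]
Since $t_nh(t_n)^2\to\infty$ and $\norm{e^{i\tau\Delta}\phi}_{S(\R)}$ is finite for every $\phi\in\fhsc$ (Proposition~\ref{prop:Strichartz}), compactness of $K$ forces the right-hand side to $0$. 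Corollary~\ref{C:stability} then produces forward scattering of $u$ from $t_n$, in violation of~(i).

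The principal obstacle is the nonlinear-profile step in the single-profile extraction: with $t_0=1$ fixed but $h_n^j$ free to drift, each profile must be placed in its natural time frame, the divergent cases $(h_n^j)^{-2}\to 0,\infty$ require the at-infinity well-posedness theory, and the $S$-orthogonality feeding Proposition~\ref{thm:lpt} must be verified by hand.
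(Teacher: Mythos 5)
Your proposal follows the same concentration-compactness strategy as the paper (scaling to pin $t_0=1$, profile decomposition, nonlinear profiles, minimality forcing a single profile, then almost periodicity from the resulting Palais--Smale property), but two points are worth contrasting. First, for profiles whose rescaled time parameter $(h_n^j)^2 t_n$ tends to infinity you propose constructing nonlinear profiles via an ``at-infinity'' Cauchy theory; the paper instead absorbs such profiles directly into the remainder $W_n^J$, because rescaling, Strichartz, and monotone convergence show that their linear flow already has vanishing Strichartz norm on $[t_n,\infty)$. This is cleaner---the paper's at-infinity well-posedness is stated for $t_0=-\infty$, not $t_0=+\infty$, so your route would require an extra time-reversal---and your dichotomy is slightly miscalibrated: $(h_n^j)^{-2}\to\infty$ means $(h_n^j)^2 t_n\to 0$, i.e.\ $\tau^j=0$, which is a finite time handled by the ordinary $t_0=0$ Cauchy theory, not by any at-infinity argument. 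Second, your proof of $h(t)\lesssim_u t^{-1/2}$ is genuinely different from the paper's. The paper obtains it by re-running Proposition~\ref{prop:key} on $u_n=u|_{[t_n,T_{\max})}$ and reading off that the profile parameters must satisfy $h_n^2 t_n\to\tau<\infty$, then comparing with the almost-periodicity parameters of $u$. Your argument uses only almost periodicity and Corollary~\ref{C:stability}: the rescaling identity for the $S$-norm, compactness of $K$, and the vanishing of $\|e^{i\tau\Delta}\phi\|_{S([s,\infty))}$ as $s\to\infty$ show the linear $S$-norm from time $t_n$ is small, which by stability forces $T_{\max}=\infty$ and forward scattering, a contradiction. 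This is valid and arguably more self-contained, since it bypasses the implicit comparison of symmetry parameters in the paper's argument; just be careful to take $I=[t_n,\infty)$ in Corollary~\ref{C:stability} so that the conclusion $I\subset I_{\max}$ simultaneously gives $T_{\max}=\infty$ and finiteness of $\|u\|_{W([t_n,\infty))}$.
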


Note that Theorem~\ref{thm:apsol} makes no reference to the initial time $t_0$ appearing in Theorem~\ref{T:main}, despite the broken time-translation symmetry (cf. Remark~\ref{R:tt}) of the problem.  As a substitute for time translation, we will make use of the scaling symmetry \eqref{eq:scale1.5} to reduce Theorem~\ref{T:main} to a well-posedness statement about the case $t_0=1$.   The following property plays an important role:
\begin{equation}\label{eq:scale2}
f \in e^{it \Delta} \fhsc \Longleftrightarrow f_{\{\lambda\}} \in e^{it\lambda^{-2}\Delta} \fhsc.
\end{equation}

Consider the case $t_0\in(0,\infty)$.  By scaling, prescribing data $u_0\in e^{it_0\Delta}\fhsc$ at $t=t_0$ is equivalent to prescribing  data $u_{0\{\sqrt{t_0}\}}\in e^{i\Delta}\fhsc$ at $t=1$.  Furthermore, rescaling has no effect on the \emph{a priori} bound \eqref{asmp:bdd}. Thus, to prove Theorem~\ref{T:main} for arbitrary $t_0\in(0,\infty)$, it suffices to prove it when $t_0=1$.  Similarly, the case $t_0\in(-\infty,0)$ can be reduced to the case $t_0=-1$.  Thus, it suffices to consider only $t_0\in\{-\infty,-1,0,1\}$. 

Next, suppose we prescribe data $u_0\in\fhsc$ at $t=0$.  By local well-posedness, a solution $u$ exists on some interval $(-\delta,\delta)$.  The rescaling $u_{[\sqrt{\delta/2}]}$ is now a solution on $(-2,2)$ with $u_{[\sqrt{\delta/2}]}(1)\in e^{i\Delta}\fhsc$.  Thus, if we can prove Theorem~\ref{T:main} with $t_0=1$, we can also handle $t_0=0$.  A similar argument reduces the case $t_0=-\infty$ to the case $t_0=-1$.  

The arguments above also show that the case $t_0=-1$ reduces to the case $t_0=1$, provided we prove that \eqref{asmp:bdd} implies $T_{\max}>0$.  By the time-reversal symmetry $u(t,x)\mapsto \overline{u(-t,x)}$, this is equivalent to proving that \eqref{asmp:bdd} implies $T_{\min}<0$ when $t_0=1$.  

In conclusion, we have reduced Theorem~\ref{T:main} to the following:

\begin{theorem}\label{P:main} Let $u_0\in e^{i\Delta}\fhsc$. Let $u:I_{\max}\times\R^d\to\C$ be the maximal-lifespan solution to \eqref{nls} with $e^{-i\Delta}u(1)=u_0$. Suppose
\[
\sup_{t\in I_{\max}} \|e^{-it\Delta}u(t)\|_{\fhsc} <\infty.
\]
Then $T_{\min}<0$, $T_{\max}=\infty$, and $u$ scatters in $\fhsc$ forward in time.
\end{theorem}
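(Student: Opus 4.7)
The plan is the standard concentration-compactness contradiction, guided by the roadmap in the introduction. Assume scattering forward in time fails for some initial datum at $t=1$ satisfying the \emph{a priori} bound. Small-data theory furnishes a positive critical threshold
\[
M_c := \inf\bigl\{ M>0 : \text{every maximal solution with } \|e^{-it\Delta}u(t)\|_{L_t^\infty\fhsc}\leq M \text{ scatters forward}\bigr\}.
\]
By hypothesis $M_c<\infty$; pick a sequence $u_n$ of non-scattering maximal solutions with $\|e^{-it\Delta}u_n(t)\|_{L_t^\infty\fhsc}\to M_c$ and $S_{[1,T_{\max,n})}(u_n)\to\infty$. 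Apply Proposition~\ref{prop:pd} to $\phi_n := e^{-i\Delta}u_n(1)$. Using the $\fhsc$-decoupling \eqref{eq:pd3}, the orthogonality \eqref{eq:pd2}, the Strichartz-norm vanishing \eqref{eq:pd4} of the remainder, and the stability Proposition~\ref{thm:lpt} (via Corollary~\ref{C:stability}), the standard induction-on-critical-norm scheme rules out multiple profiles and reduces to a single one carrying all of $M_c$. This produces a minimal non-scattering solution $u$ whose interaction orbit $\{e^{-it\Delta}u(t)\}$ is pre-compact in $\fhsc$ modulo scaling and Galilei boosts on $[1,T_{\max})$, which is exactly Theorem~\ref{thm:apsol}. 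A rescaling argument, using the decay \eqref{strong-hyp} and the fact that $u$ exists on $[1,T_{\max})$, forces the frequency scale to satisfy $h(t)\lesssim_u t^{-1/2}$.

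Next I push $u$ into a self-similar regime with $h(t)\sim t^{-1/2}$ and $\xi(t)\equiv 0$ via the Duhamel decomposition carried out in Section~\ref{S:reduction}; the idea is that the low-frequency part of the almost periodic orbit would decouple into a separate asymptotic profile unless every scale saturates the bound $h(t)\lesssim t^{-1/2}$. In this self-similar regime, and in the spirit of the radial mass-critical treatment of \cite{KTV,KVZ}, I propagate regularity below the critical Galilean order $|s_c|$: applying $|t\nabla|^{-\eps}$ to the Duhamel formula in the $\dot X^{|s_c|}$-picture and invoking the paraproduct estimate of Lemma~\ref{L:parap} gives a closed bound on $\||t\nabla|^{-\eps}\tilde u\|_{L_t^{q_1,2}L_x^{r_1}}$ in terms of $W$-norms already controlled by almost periodicity. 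Iterating this improvement across the self-similar scales eventually places $u$ in $L_t^\infty L_x^2$. Mass-subcriticality and mass conservation then extend $u$ to a global $L_x^2$-solution, in particular giving it a continuous $L_x^2$ trace at $t=0^+$. But a genuinely self-similar almost periodic solution must concentrate its entire $\fhsc$-norm into $t=0^+$, and this is incompatible with such an $L_x^2$ trace. The resulting contradiction establishes forward scattering and $T_{\max}=\infty$.

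For the remaining claim $T_{\min}<0$, the broken time translation (Remark~\ref{R:tt}) prevents a direct rescaling reduction. Suppose instead that $T_{\min}\in[0,1)$. The blowup criterion gives $\lim_{t\downarrow T_{\min}}S_{(t,1]}(u)=\infty$, and I run the same concentration-compactness/self-similar preclusion scheme on this backward-blowup family. The output is a minimal backward-blowup almost periodic solution which is again forced into the self-similar regime concentrating at $t=0^+$, and is again precluded by the argument above, yielding $T_{\min}<0$. The chief obstacle throughout is this self-similar preclusion: propagating $|t\nabla|^{-\eps}$-regularity below the critical Galilean order requires the paraproduct bound to interact cleanly with the $\dot X^{|s_c|}$-structure, and one must keep every constant uniform in the self-similar dilation so that the final $L_x^2$-bound does not degenerate as the iteration progresses.
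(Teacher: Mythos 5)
Your forward-scattering half is essentially the paper's argument: extract a minimal non-scattering solution whose interaction variable is almost periodic modulo scaling and Galilei boosts, reduce to a self-similar solution, and preclude it by propagating decay below the critical Galilean order (the $\M$--$\S$--$\N$ bootstrap built on Lemma~\ref{L:parap}) until the solution lands in $L_x^2$, where mass conservation gives the contradiction. Two small inaccuracies there: the bound $h(t)\lesssim_u t^{-1/2}$ does not come from the dispersive decay \eqref{strong-hyp} but from the profile-decomposition bookkeeping in Proposition~\ref{prop:key}(i) (profiles with $(h_n^j)^2t_n\to\infty$ are shunted into the remainder because their forward Strichartz norms vanish, so the surviving profile has $h_n^2t_n\to\tau<\infty$); and your final contradiction (``$\fhsc$-concentration at $t=0^+$ is incompatible with an $L_x^2$ trace'') is left vague --- the paper avoids having to reconcile the $\fhsc$ and $L^2$ solution theories at $t=0$ by instead observing that the bound $\M(A)\lesssim_u A^{|s_c|+\eps}$ makes the $L^2_x$-norm tend to zero as $t\to\infty$, so mass conservation forces $u\equiv0$.

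The genuine gap is your treatment of $T_{\min}<0$. Your critical threshold $M_c$ is defined only through failure of \emph{forward} scattering, and for the backward statement you assert that one can ``run the same scheme on the backward-blowup family'' to produce a minimal backward-blowup almost periodic solution concentrating at $t=0^+$. As stated this does not go through: no backward critical quantity is ever defined, and the machinery you invoke (Theorem~\ref{thm:apsol}, the local-constancy and self-similar reduction of Section~\ref{S:reduction}) is built forward in time --- the almost periodicity is established on $[1,T_{\max})$ and the self-similar limit is taken along $t_n\to\infty$ --- so a backward analogue concentrating at $T_{\min}\ge0$ would require its own construction, not a citation of ``the same scheme.'' The paper's resolution is different and is precisely what makes the two halves cost nothing extra: it defines $L(E)$ and $\E_c$ over compact intervals $I\subset(0,\infty)$ containing $t=1$, and Lemma~\ref{lem:E0} shows that failure of backward extension ($T_{\min}\ge0$, so $S_{[t,1]}(u)\to\infty$ as $t\downarrow T_{\min}$) and failure of forward scattering \emph{both} force $\E_c<\infty$; the point is that blowup of the scattering norm on $[t_n,1]$ is forward blowup from the left endpoints $t_n$, so the restricted solutions already form an admissible minimizing family for the forward-directed Proposition~\ref{prop:key}, whose output is always a forward-non-scattering minimal solution. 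A single self-similar preclusion then kills both failure modes simultaneously. To repair your argument you should either adopt this unified definition of the critical threshold, or else carry out in detail the backward analogues of Theorem~\ref{thm:apsol} and of Corollary~\ref{cor:ss}/Theorem~\ref{thm:ss} (including showing the minimal backward object has $T_{\min}=0$ and $h(t)\sim t^{-1/2}$ as $t\downarrow0$), which is substantially more work than your sketch acknowledges.
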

 
It remains to prove Theorem~\ref{P:main}.  To this end, we define the following two quantites:
\begin{equation}\label{def:Einf}
\E_\infty := \inf\bigl\{ \limsup_{t\uparrow T_{\max}} \| e^{-it\Delta}u(t)\|_{\fhsc}\bigr\},
\end{equation}
where the infimum is taken over all maximal-lifespan solutions to \eqref{nls} with $1\in I_{\max}$ that do \emph{not} scatter forward in time, and
\begin{equation}\label{def:E0}
\E_0 := \inf\bigl\{ \limsup_{t\downarrow T_{\min}} \|e^{-it\Delta}u(t)\|_{\fhsc}\bigr\},
\end{equation}
where the infimum is taken over all maximal-lifespan solutions to \eqref{nls} with $1\in I_{\max}$ such that $T_{\min}\geq 0$.  To prove Theorem~\ref{P:main}, we must show $\E_0=\E_\infty=\infty$. 

\begin{remark}  At first glance, the statement $T_{\min}<0$ in Theorem~\ref{P:main} might appear easier to prove than the forward scattering conclusion of this theorem.  However, if $\E_\infty<\infty$, then we can construct a solution that blows up at $t=0$ (see Theorem~\ref{thm:ss} below), giving $\E_0<\infty$.  That is, $\E_0=\infty$ implies $\E_\infty=\infty$.  In fact, one can also show that $\E_\infty=\infty$ implies $\E_0=\infty$.  Thus, proving that the solution in Theorem~\ref{P:main} extends to negative times is as hard as proving forward scattering for this solution.
\end{remark}

In order to treat these two problems simultaneously, we introduce the quantity $L:[0,\infty)\to[0,\infty]$, defined by
\[
L(E) = \sup\{S_{I}(u)\},
\]
where the supremum is taken over all solutions to \eqref{nls} on a compact interval $I \subset (0,\infty)$
with $1\in I$ and
\[
\sup_{t\in I} \|e^{-it\Delta}u(t)\|_{\fhsc} \leq E.
\]
We now define
\begin{equation}\label{def:Ec}
\E_c := \inf \{ E : L(E) = \infty \} = \sup \{ E: L(E)<\infty \}.
\end{equation}

An important relationship between the quantities just defined is the following. 

\begin{lemma}\label{lem:E0} If $\min (\E_0,\E_\infty) <\infty$, then $\E_c<\infty$.
\end{lemma}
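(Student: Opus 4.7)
I would prove the contrapositive: assume $\E_c=\infty$, i.e.\ $L(E)<\infty$ for every finite $E$, and deduce $\E_0=\E_\infty=\infty$. The point is that the finiteness of $L(E)$ gives a uniform $S$-norm bound for any solution living on a compact sub-interval of $(0,\infty)$ containing $1$ whose interaction variable $e^{-it\Delta}u(t)$ is $\fhsc$-bounded by $E$. Combined with the three-part blowup/scattering criterion, this should exclude the existence of any of the solutions whose existence is required by the finiteness of $\E_\infty$ or $\E_0$.

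\textbf{Ruling out $\E_\infty<\infty$.} Let $u$ be a maximal-lifespan solution with $1\in I_{\max}$, not scattering forward in time, and with $\limsup_{t\uparrow T_{\max}}\|e^{-it\Delta}u(t)\|_{\fhsc}<\infty$. Since $t\mapsto e^{-it\Delta}u(t)$ is continuous on $I_{\max}$ into $\fhsc$, finiteness of the $\limsup$ upgrades to a genuine sup: $E':=\sup_{t\in[1,T_{\max})}\|e^{-it\Delta}u(t)\|_{\fhsc}<\infty$. For every compact $[1,T]\subset[1,T_{\max})$ we have $[1,T]\subset(0,\infty)$, so the definition of $L$ gives $S_{[1,T]}(u)\le L(E')<\infty$; monotone convergence as $T\uparrow T_{\max}$ yields $S_{[1,T_{\max})}(u)\le L(E')$. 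If $T_{\max}<\infty$ this violates the first bullet of the blowup/scattering criterion, while if $T_{\max}=\infty$ the third bullet forces $u$ to scatter forward, contradicting our choice of $u$. Hence no such $u$ exists, so $\E_\infty=\infty$.

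\textbf{Ruling out $\E_0<\infty$.} Now let $u$ be maximal-lifespan with $1\in I_{\max}$, $T_{\min}\ge 0$, and $\limsup_{t\downarrow T_{\min}}\|e^{-it\Delta}u(t)\|_{\fhsc}<\infty$. The same continuity argument gives $E'':=\sup_{t\in(T_{\min},1]}\|e^{-it\Delta}u(t)\|_{\fhsc}<\infty$. Since $T_{\min}\ge 0$, every $a\in(T_{\min},1]$ satisfies $a>0$, so $[a,1]\subset(0,\infty)$ and the definition of $L$ gives $S_{[a,1]}(u)\le L(E'')$. Letting $a\downarrow T_{\min}$ produces $S_{(T_{\min},1]}(u)\le L(E'')<\infty$. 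Since $T_{\min}>-\infty$, this contradicts the blowup criterion $\lim_{t\downarrow T_{\min}}S_{(t,1]}(u)=\infty$, so no such $u$ exists and $\E_0=\infty$.

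\textbf{Main obstacle.} None of the individual steps is substantial; the proof is essentially bookkeeping over the blowup/scattering criterion. The only real point requiring care is making sure every compact interval fed into $L$ stays inside $(0,\infty)$. On the right this is automatic because $\E_\infty$ is defined starting from $t=1>0$; on the left it is precisely the restriction $T_{\min}\ge 0$ built into the definition of $\E_0$ that makes the argument go through.
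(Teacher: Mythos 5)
Your proof is correct, and the underlying mechanism is the same as the paper's: restrict the solution to compact subintervals of $(0,\infty)$ containing $t=1$, feed the uniform $\fhsc$-bound into the definition of $L$, and play the resulting finite $S$-norm against the blowup/scattering criterion. The difference is that you run this as a contrapositive, while the paper argues directly: given a near-minimizer for $\E_\infty$ (or $\E_0$), it chooses $t_0$ so that the $\fhsc$-norm is bounded by $\E_\infty+2\eps$ on $[t_0,T_{\max})$, uses non-scattering to get $S_{[t_0,T_{\max})}(u)=\infty$, and rescales by $\sqrt{t_0}$ so that the lifespan contains $1$, thereby obtaining the quantitative inequality $\E_c\le\min(\E_0,\E_\infty)$ rather than the bare implication. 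That sharper inequality is what the paper actually invokes later (e.g.\ in \eqref{eq:1stprofile} and in concluding $\E_\infty=\E_c$ at the end of the proof of Proposition~\ref{prop:key}), so your argument proves the lemma as stated but would not by itself support those later uses; on the other hand, your continuity argument upgrading the finite $\limsup$ to a finite sup over all of $[1,T_{\max})$ (resp.\ $(T_{\min},1]$) lets you dispense with the rescaling step, which is a genuine simplification when only qualitative finiteness is needed. Two small points: the reading of $\E_c=\infty$ as ``$L(E)<\infty$ for every finite $E$'' uses the monotonicity of $L$ in $E$ and deserves a word, and in the forward-in-time case the relevant item of the blowup/scattering criterion is the second bullet (the one for $T_{\max}<\infty$), not the first.
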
 
\begin{proof}  We will show that if $\min(\E_0,\E_\infty)<\infty$, then $\E_c\leq \min(\E_0,\E_\infty)$. 

First, suppose $\E_\infty<\infty$.  By definition, for any $\eps>0$ there exists a solution $ u$ such that  $1\in I_{\max}$, $ u$ does not scatter forward in time, and
\[
\limsup_{t \uparrow T_{\max}} \cn{e^{-it\Delta}{u}(t)} \le \E_\infty + \eps.
\]
In particular, there exists $0<t_0 \in I_{\max}$ such that
\[
E:=\sup_{t\in [t_0, T_{\max})} \cn{e^{-it\Delta}{u}(t)} \le \E_\infty + 2\eps.
\]

As ${u}$ does not scatter forward in time, we have $S_{[t_0, T_{\max})}({u}) =\infty.$  As the lifespan of the rescaled solution $u_{[\sqrt{t_0}]}$ contains $t=1$, we deduce that $L(E)=\infty$.  This implies that $\E_c\leq \E_\infty+2\eps$.  As $\eps$ was arbitrary, we conclude $\E_c\leq \E_\infty$. 

A similar argument shows that $\E_0<\infty$ implies $\E_c\leq \E_0$. 
\end{proof}

A direct consequence of Lemma~\ref{lem:E0} is the following

\begin{corollary}
If $\E_c=\infty$, then Theorem~\ref{T:main} holds.
\end{corollary}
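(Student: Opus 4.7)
The plan is a short formal deduction: take the contrapositive of Lemma~\ref{lem:E0}, use the resulting information to verify Theorem~\ref{P:main} directly, and then invoke the scaling/time-reversal reductions spelled out in the paragraphs preceding Theorem~\ref{P:main} to pass from Theorem~\ref{P:main} back to Theorem~\ref{T:main}. The first step is immediate: if $\E_c=\infty$, then $\min(\E_0,\E_\infty)=\infty$ and hence $\E_0=\E_\infty=\infty$.

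For the second step, let $u$ be the maximal-lifespan solution to \eqref{nls} with $e^{-i\Delta}u(1)=u_0\in e^{i\Delta}\fhsc$ and assume $\sup_{t\in I_{\max}} \|e^{-it\Delta}u(t)\|_{\fhsc}<\infty$. Since $1\in I_{\max}$, if $u$ failed to scatter forward in time then the definition \eqref{def:Einf} would attach to $u$ a finite value of $\limsup_{t\uparrow T_{\max}} \|e^{-it\Delta}u(t)\|_{\fhsc}$, contradicting $\E_\infty=\infty$. Hence $u$ scatters forward in time, and in particular $T_{\max}=\infty$. Analogously, if $T_{\min}\geq 0$ then \eqref{def:E0} would attach to $u$ a finite value of $\limsup_{t\downarrow T_{\min}} \|e^{-it\Delta}u(t)\|_{\fhsc}$, again contradicting $\E_0=\infty$; therefore $T_{\min}<0$. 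This is precisely the conclusion of Theorem~\ref{P:main}, and the reductions recalled above then yield Theorem~\ref{T:main}.

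There is no genuine obstacle here: the corollary is an essentially tautological consequence of the definitions \eqref{def:Einf}--\eqref{def:Ec} combined with Lemma~\ref{lem:E0}. The substantive content lives upstream---in the scaling and time-reversal reduction justifying the passage from Theorem~\ref{T:main} to Theorem~\ref{P:main}, and in Lemma~\ref{lem:E0} itself---so once those are in hand the deduction here requires no new estimates.
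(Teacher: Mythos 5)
Your proposal is correct and follows essentially the same route the paper intends: the corollary is stated there as a direct consequence of Lemma~\ref{lem:E0} (whose contrapositive gives $\E_0=\E_\infty=\infty$), combined with the paper's earlier remark that Theorem~\ref{P:main} amounts to showing $\E_0=\E_\infty=\infty$ and the scaling/time-reversal reduction of Theorem~\ref{T:main} to Theorem~\ref{P:main}. Your explicit verification that $\E_0=\E_\infty=\infty$ yields the conclusions of Theorem~\ref{P:main} is exactly the intended (omitted) argument.
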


We will prove that $\E_c=\infty$ by contradiction.  The first major step is Theorem~\ref{thm:apsol}, in which we will show that if $\E_c<\infty$, we can find almost periodic solutions.  The key ingredient in the proof of Theorem~\ref{thm:apsol} is the following convergence result for minimizing sequences. 

\begin{proposition}[Key convergence result]\label{prop:key}  Suppose $\E_c<\infty$.  Suppose that there exist intervals $I_n\subset(0,\infty)$ and solutions $u_n:I_n\times\R^d\to\C$ to \eqref{nls} such that 
\begin{equation}\label{asmp:PS1}
\limsup_{n\to\infty} \sup_{t\in I_n} \norm{ e^{-it\Delta} u_n(t)}_{\fhsc} = \E_c.
\end{equation}
Suppose further that there exist $t_n\in I_n$ satisfying
\begin{equation}\label{asmp:PS2}
\lim_{n\to\infty} S_{I_n \cap [ t_n,\infty) } (u_n) =\infty.
\end{equation}
Then $\E_\infty=\E_c$, and the following hold: 
\begin{itemize}
\item[(i)] There exist non-zero $\psi \in\fhsc$, $\tau\in[0,\infty)$, $\xi_n \in \R^d$, and $h_n\in(0,\infty)$ such that 
\[
h_n^2 t_n \to \tau\qtq{and} \bigl(e^{-ix\xi_n} e^{-it_n\Delta} u_n (t_n)\bigr)_{\{\frac{1}{h_n}\}} \to  \psi\qtq{in}\fhsc
\]
as $n\to\infty$ along a subsequence. 
\item[(ii)] The solution $u$ to \eqref{nls} with $e^{-i\tau\Delta}u(\tau)=\psi$ does not scatter forward in time and
\begin{equation}\label{sat}
\sup_{ t\in [\tau,T_{\max})} \cn{e^{-it\Delta} u(t)} = \limsup_{ t\uparrow T_{\max}} \cn{e^{-it\Delta} u(t)} = \E_c.
\end{equation}
\end{itemize}
\end{proposition}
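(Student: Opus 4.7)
The plan is to apply the linear profile decomposition (Proposition~\ref{prop:pd}) to the $\fhsc$-bounded sequence $\phi_n := e^{-it_n\Delta}u_n(t_n)$ and show, via minimality of $\E_c$ together with the stability result (Proposition~\ref{thm:lpt}), that only one profile survives and its time parameter is finite. This is the standard Palais--Smale/concentration-compactness strategy, adapted to the weighted, mass-subcritical setting.

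First I would write
\[
\phi_n = \sum_{j=1}^{J} e^{ix\xi_n^j}\, \psi^j_{\{h_n^j\}} + W_n^J,
\]
and, after passing to a subsequence, assume $\tau_n^j := (h_n^j)^2 t_n \to \tau^j \in [0,\infty]$ for each $j$. To each profile $\psi^j$ I associate a nonlinear profile $V^j$: for $\tau^j \in [0,\infty)$, let $V^j$ be the maximal-lifespan solution to \eqref{nls} with $e^{-i\tau^j\Delta}V^j(\tau^j) = \psi^j$; for $\tau^j = \infty$, let $V^j$ be the (unique) solution scattering forward to $e^{it\Delta}\psi^j$, provided by small-data theory in a neighborhood of $+\infty$. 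Using the scaling and Galilei symmetries of \eqref{nls}, I define the adapted nonlinear profile
\[
v_n^j(t,x) := (h_n^j)^{\frac{2}{p}} e^{ix\xi_n^j - it|\xi_n^j|^2} V^j\bigl((h_n^j)^2 t,\ h_n^j(x - 2t\xi_n^j)\bigr),
\]
which is a solution to \eqref{nls}.

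Next, I argue by contradiction that either two profiles are nonzero, or the single surviving profile has $\tau^j = \infty$. In the multi-profile case, the decoupling \eqref{eq:pd3} forces $\|\psi^j\|_{\fhsc} < \E_c$ for every $j$, so by the definition of $\E_c$ each $V^j$ has finite $S$-norm on its forward lifespan; by scaling, the $S$-, $W$-, and $\fhsc$-norms of $v_n^j$ on $[t_n,\infty)$ are bounded uniformly in $n$ and $j$. Form the approximate solution
\[
\tilde u_n(t) := \sum_{j=1}^{J} v_n^j(t) + e^{i(t-t_n)\Delta} W_n^J.
\]
Using the asymptotic orthogonality \eqref{eq:pd2} (which causes cross terms $|v_n^j|^p v_n^k$ with $j \neq k$ to vanish in the $N$-norm as $n\to\infty$), the vanishing Strichartz norms of the remainder \eqref{eq:pd4}, and Lemma~\ref{L:nonlinear} together with the embeddings of Lemma~\ref{L:embeddings}, one checks that $\tilde u_n$ solves \eqref{nls} up to an error that is small in $N([t_n,\infty))$, with bounded $W$-norm and data at $t=t_n$ agreeing with $u_n(t_n)$ up to $o_{\fhsc}(1)$ as $n\to\infty$ (after sending $J\to\infty$). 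Proposition~\ref{thm:lpt} then yields $S_{[t_n,\infty)}(u_n)$ bounded uniformly in $n$, contradicting \eqref{asmp:PS2}. A single profile with $\tau^j = \infty$ is ruled out the same way: $V$ has finite $S$-norm near $+\infty$, and the stability argument applies on $[t_n,\infty)$.

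Hence exactly one profile $\psi$ survives, with $\tau := \lim (h_n)^2 t_n \in [0,\infty)$, and \eqref{eq:pd3} gives $\|\psi\|_{\fhsc} = \E_c$ and $\|W_n\|_{\fhsc}\to 0$, proving~(i). For~(ii), the nonlinear profile $u$ with $e^{-i\tau\Delta}u(\tau) = \psi$ cannot scatter forward, since otherwise one more application of stability would force $u_n$ to have bounded $S$-norm on $[t_n,\infty)$, contradicting \eqref{asmp:PS2}. The bound $\sup_{t\in[\tau,T_{\max})} \|e^{-it\Delta}u(t)\|_{\fhsc} \le \E_c$ follows from the definitions \eqref{def:Ec} and \eqref{def:Einf} applied to $u$, since any strictly larger norm would transfer (via the same profile/stability scheme, now using $u$ itself as the sequence) to a violation of the threshold; the reverse inequality is obtained at $t=\tau$, giving \eqref{sat} and the identification $\E_\infty = \E_c$. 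The main obstacle, as usual, is the approximate-solution step: one must show that nonlinear interactions between distinct $v_n^j$'s vanish in the $N$-norm, which requires carefully exploiting the asymptotic orthogonality \eqref{eq:pd2} in the various Lorentz-type space-time norms adapted to the operators $J^{|s_c|}(t)$ — a point of subtlety in the weighted framework, compounded by the absence of time-translation symmetry (cf.\ Remark~\ref{R:tt}).
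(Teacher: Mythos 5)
Your overall skeleton (profile decomposition, nonlinear profiles, orthogonality, stability, contradiction with \eqref{asmp:PS2}) matches the paper's, but there is a genuine gap at the heart of the argument. You write that in the multi-profile case ``the decoupling \eqref{eq:pd3} forces $\|\psi^j\|_{\fhsc} < \E_c$ for every $j$, so by the definition of $\E_c$ each $V^j$ has finite $S$-norm on its forward lifespan.'' This step fails: $\E_c$ is defined through $L(E)$, which requires the \emph{time-uniform} bound $\sup_{t\in I}\|e^{-it\Delta}V^j(t)\|_{\fhsc}\le E$, not merely a bound on the initial datum. Since the $\fhsc$-norm is \emph{not} a conserved quantity for \eqref{nls}, the single-time information $\|\psi^j\|_{\fhsc}<\E_c$ gives you no control on $\sup_t\|e^{-it\Delta}V^j(t)\|_{\fhsc}$, so you cannot invoke the minimality of $\E_c$ to conclude that $V^j$ scatters. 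This is exactly the subtlety the paper flags (``This is not obvious, as the $\fhsc$-norm is not conserved'') and resolves through the stopping-time / persistence-of-decoupling argument of Lemma~\ref{lem:key_proof2}, which follows \cite{KV2}: one first shows there is at least one non-scattering profile (Lemma~\ref{lem:key_proof1}), then stops at times $K_n^m$ where the $W$-norms of the nonlinear profiles reach a prescribed size, and proves that the $\fhsc$-decoupling persists at those times so that a second nonvanishing profile would push $\sup_t\|e^{-it\Delta}u_n^J(t)\|_{\fhsc}$ strictly above $\E_c$. Your proposal skips this entirely. The same issue affects your bound $\sup_{t\in[\tau,T_{\max})}\|e^{-it\Delta}u(t)\|_{\fhsc}\le\E_c$ in part (ii): the paper obtains it as a by-product of the time-persistent decoupling, whereas you only wave at ``the same profile/stability scheme.''

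A secondary, non-fatal difference: you treat the case $(h_n^j)^2 t_n\to\infty$ by introducing nonlinear profiles $V^j$ with data prescribed at $t_0=+\infty$. The paper instead absorbs such profiles into the remainder $W_n^J$, noting via rescaling, Strichartz and monotone convergence that their contribution vanishes on $[t_n,\infty)$; this yields the weakened smallness \eqref{eq:mod_small}, which is still adequate for Lemma~\ref{lem:unJ}. The paper's route is preferable here because the local theory in Section~\ref{S:notation} is stated only for $t_0\in[-\infty,\infty)$ and the problem is not time-translation invariant, so a ``data at $+\infty$'' theory is not readily available; the absorption trick sidesteps the issue cleanly. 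These two points aside, your account of the approximate-solution construction and the use of Proposition~\ref{thm:lpt} to contradict \eqref{asmp:PS2} is consistent with the paper's Lemmas~\ref{lem:unJ} and~\ref{lem:key_proof1}.
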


\begin{proof} We apply Proposition~\ref{prop:pd} to write
\begin{equation}\label{eq:key_proof1}
e^{-it_n\Delta}u_n(t_n) = \sum_{j=1}^J e^{ix\xi_n^j} \psi^j_{\{ h_n^j \}}+ W^J_n
\end{equation}
along a subsequence. We modify the decomposition in the following way: for any $j$ such that $(h_n^j)^2t_n\to\infty$, we add the profile $e^{ix\xi_n^j}\psi^j_{\{h_n^j\}}$ to the remainder term $W_n^J$.  For these profiles, rescaling, Strichartz, and the monotone convergence theorem imply 
\[
\norm{e^{it\Delta}e^{ix\xi_n^j} \psi^j_{\{ h_n^j \}}}_{L_t^{q,\infty}L_x^r([t_n,\infty)\times\R^d)}\to0\qtq{as}n\to\infty.
\]
Note that the vanishing of the remainder $W_n^J$ now holds only in the weaker sense 
\begin{equation}\label{eq:mod_small}
	\liminf_{J\to\infty} \limsup_{n\to\infty} \norm{e^{it\Delta}W^J_n}_{L_t^{q,\infty}L_x^r([t_n,\infty)\times\R^d)} =0,
\end{equation}
rather than the original sense \eqref{eq:pd4}. 

Having made this modification, we can assume that each $(h_n^j)^2 t_n$ is bounded in $n$, so that $(h_n^j)^2 t_n\to \tau^j\in[0,\infty)$ along a subsequence.

Now let $\Psi^j: I^j \times \R^d \to \C$ be the maximal-lifespan solution to \eqref{nls} with $ e^{-i\tau^j\Delta}\Psi^j(\tau^j) = \psi^j$ and define
\[
v_n^j(t,x) := e^{ix\xi_n^j} e^{-it|\xi_n^j|^2} \Psi^j_{[h_n^j]}(t,x-2t\xi_n^j),
\]
which solves \eqref{nls} and satisfies
\begin{equation}\label{eq:v_Psi}
e^{-it\Delta} v_n^j(t) = e^{ix\xi_n^j} e^{-it\Delta} \Psi^j_{[h_n^j]}(t) = e^{ix\xi_n^j}\bigl (e^{-i(h_n^j)^2 t \Delta} \Psi^j ((h_n^j)^2t)\bigr)_{\{ h_n^j\}}.
\end{equation}

We also define the functions
\[
{u}^J_n(t) := \sum_{j=1}^J v_n^j(t) + e^{it\Delta} W_n^J,
\]
which we regard as approximate solutions to \eqref{nls} that asymptotically match $u_n$ at time $t_n$.  Indeed, such asymptotic agreement at time $t_n$ follows by construction:
\begin{align*}
e^{-it_n\Delta}({u}_n^J(t_n) - u_{n}(t_n))
&= \sum_{j=1}^J e^{-it_n\Delta} v_n^j(t_n) - e^{ix\xi_n^j} \psi^j_{\{ h_n^j \}} \\
&= \sum_{j=1}^J e^{ix\xi_n^j} \bigl[(e^{-i(h_n^j)^2t_n\Delta} \Psi^j((h_n^j)^2t_n))_{\{h_n^j\}}-\psi^j_{\{ h_n^j \}} \bigr],
\end{align*}
and hence, by the definition of $\Psi^j$ we have
\begin{equation}\label{E:unJ1}
\norm{e^{-it_n\Delta}({u}_n^J(t_n) - u_{n}(t_n))}_{\fhsc}\to 0\qtq{as}n\to\infty.
\end{equation}

To see that $u_n^J$ are actually approximate solutions will require control over the functions $v_n^j$.  As we will see below, the decoupling \eqref{eq:pd3} and the boundedness \eqref{asmp:PS1} imply good control for large $j$.  The following lemma shows that whenever we have control over \emph{all} of the $v_n^j$, then $u_n^J$ also obey good bounds and approximately solve \eqref{nls}.

\begin{lemma}\label{lem:unJ} Suppose $\tilde I_n=(t_n,t_n+T_n)$ are intervals such that
\begin{equation}\label{goodcontrol}
\sup_j\limsup_{n\to\infty} \|v_n^j\|_{W(\tilde I_n)} \lesssim 1.
\end{equation}
Then the functions $u_n^J$ satisfy the following:
\begin{itemize}
\item[(i)] Inheritance of bounds:
\[
\limsup_{J\to\infty}\limsup_{n\to\infty} \| u_n^J\|_{W(\tilde I_n)} \lesssim 1. 
\]
\item[(ii)] Approximate solutions: Writing $F(u)=\mu |u|^p u$, 
\[
\limsup_{J\to\infty}\limsup_{n\to\infty} \|(i\partial_t+\Delta)u_n^J - F(u_n^J) \|_{N(\tilde I_n)} = 0. 
\]
\end{itemize}
\end{lemma}

\begin{proof}  We only sketch the proof; for complete details, one can refer to \cite{Mas2}. 

We begin with (i). Let $J_0\geq1$ to be determined below.  For $J>J_0$, define
\[
z_{n,J_0}^J(t) := \sum_{j=J_0+1}^J v_n^j(t).
\] 
Using the decoupling \eqref{eq:pd3} and boundedness \eqref{asmp:PS1}, for any $\eps>0$ we may find $J_0=J_0(\eps)$ large enough that
\[
\limsup_{n\to\infty} \norm{ e^{-it_n\Delta}z_{n,J_0}^J(t_n)}_{\fhsc}^2 \le\sum_{j=J_0+1}^J \|\psi^j\|_{\fhsc}^2 <\eps^2.
\]
For example, we can choose $\eps$ to be half of the small-data threshold.  In particular, for $n$ large, the solution $\tilde u_n$ to \eqref{nls} with $\tilde u_n(t_n)=z_{n,J_0}^J(t_n)$ is global and satisfies
\[
\|\tilde u_n\|_{W(\R)} \lesssim \eps.
\]
The orthogonality \eqref{eq:pd2} and the bounds \eqref{goodcontrol} imply that $z_{n,J_0}^J$ asymptotically solve \eqref{nls}.  Therefore, invoking Proposition~\ref{thm:lpt}, we deduce
\[
\limsup_{n\to\infty}\norm{z_{n,J_0}^J}_{W(\tilde I_n)} \lesssim \eps^\beta\qtq{for all}J>J_0
\]
(see \cite[Lemma~7.1]{Mas2} for more details).  Invoking \eqref{goodcontrol} for $j<J_0$, we obtain (i).

We turn to (ii). Let $\eps>0$ and choose $J_0=J_0(\eps)$ as above.  We now write
\begin{align}
\|&(i\partial_t+\Delta)u_n^J - F(u_n^J) \|_{N(\tilde I_n)} \nonumber \\
&\lesssim \|F(u_n^J) - F(u_n^J - z_{n,J_0}^J) \|_{N(\tilde I_n)} + \bigl\| F\bigl(\sum_{j=1}^{J_0} v_n^j\bigr) - F\bigl(\sum_{j=1}^J v_n^j\bigr)\bigr\|_{N(\tilde I_n)} \label{err1} \\
& \quad +\bigl\|F\bigl(\sum_{j=1}^J v_n^j\bigr) - \sum_{j=1}^J F(v_n^j) \bigr\|_{N(\tilde I_n)} + \bigl\| F(u_n^J - z_{n,J_0}^J) - F\bigl(\sum_{j=1}^{J_0} v_n^j\bigr) \bigr\|_{N(\tilde I_n)}. \label{err2}
\end{align}

As 
\begin{align*}
\eqref{err1} \lesssim  \|z_{n,J_0}^J\|_{W(\tilde I_n)}\bigl[ \|u_n^J\|_{W(\tilde I_n)} + \|\textstyle\sum_{j=1}^{J_0} v_n^j\|_{W(\tilde I_n)} + \|W_n^J\|_{\fhsc}\bigr]^p,
\end{align*} 
we can use the boundedness of $u_n^J$ and smallness of $z_{n,J_0}^J$ to deduce
\[
\limsup_{J\to\infty}\limsup_{n\to\infty} \eqref{err1} \lesssim \eps^\beta.
\]

Exploiting the orthogonality \eqref{eq:pd2} again, one can show that the first term in \eqref{err2} is $o(1)$ as $n\to\infty$ for each $J$.  

Finally, writing
\[
u_n^J - z_{n,J_0}^J = \sum_{j=1}^{J_0} v_n^j + e^{it\Delta} W_n^J,
\]
we can use the vanishing condition \eqref{eq:mod_small} to show that the second term in \eqref{err2} is $o(1)$ as $n\to\infty$ and then $J\to\infty$.  The complete details may be found in \cite[Lemma~7.2]{Mas2}.
\end{proof}

Our first goal is to show that there exists \emph{at least one} non-scattering profile. 

\begin{lemma}\label{lem:key_proof1}
There exists $j$ such that $\Psi^j$ does not scatter forward in time.
\end{lemma}
\begin{proof} Assume towards a contradiction that every $\Psi^j$ scatters forward in time.  Then $I_j\supset[\tau^j,\infty)$ and  $\norm{\Psi^j}_{W(I_j)} <\infty$.

For large $n$,
\[
\norm{v_n^j}_{W([t_n , \infty))}= \norm{\Psi^j_{[h_n^j]}}_{W([t_n , \infty))} \le \norm{\Psi^j}_{W(I_j)} <\infty.
\]
Thus, we can apply Lemma~\ref{lem:unJ} on the intervals $[t_n,\infty)$ to see that the $u_n^J$ are approximate solutions to \eqref{nls} for $n,J$ large.  Furthermore, by \eqref{E:unJ1}, $u_n^J$ asymptotically match $u_n$ at time $t_n$.   Applying Proposition~\ref{thm:lpt}, we deduce that
\[
\norm{u_n}_{W([t_n,\infty))} \lesssim 1
\]
for $n$ large, contradicting \eqref{asmp:PS2}.
\end{proof}

We now wish to prove that there is \emph{only one} non-scattering profile. We re-order indices so that $\Psi^j$ blows up if and only if $1\leq j\leq J_1$.  The decoupling \eqref{eq:pd3} and small-data theory guarantee that $J_1$ is finite.  We now wish to use minimality to prove that $J_1=1$.  To make this precise, we need to prove that the decoupling \eqref{eq:pd3} persists in time.  This is not obvious, as  the $\fhsc$-norm is not conserved.  We follow the arguments in \cite{KV2}.

For $m,n \ge 1$, define $j(m,n) \in \{1,2,\dots,J_1\}$ and $K_n^m=[t_n,t_n+T_n^m]$ by
\[
\sup_{1\le j \le J_1} \norm{v_n^j}_{W(K_n^m)} = \norm{v_n^{j(m,n)}}_{W(K_n^m)}  = m.
\]
By the pigeonhole principle, there exists $j_1 \in \{1,2,\dots,J_1\}$ so that for infinitely many $m$, one has $j(m,n) = j_1$ for infinitely many $n$.  By reordering the indices, we may assume that $j_1=1$.
By the definition of $\E_\infty$ and Lemma~\ref{lem:E0}, it follows that
\begin{equation}\label{eq:1stprofile}
	\limsup_{m\to\infty} \limsup_{n\to\infty} \sup_{t\in K_n^m} \norm{e^{-it\Delta}v_n^1(t)}_{\fhsc}
	\ge \E_\infty \ge \E_c.
\end{equation}

We are now in a position to prove that there is only one non-scattering profile.
\begin{lemma}\label{lem:key_proof2} The following hold:
\begin{itemize}
\item[(i)] $\psi^j\equiv 0$ for $j \ge 2$, 
\item[(ii)] $W_n^1 \to 0$ in $\fhsc$ as $n\to\infty$.
\end{itemize}
\end{lemma}
\begin{proof} By the definition of $K_n^m$,
\[
\sup_{n}\norm{v_n^j}_{W(K_n^m)} \le m \qtq{for}1\leq j\leq J_1.
\]

For $j>J_1$, $\Psi^j$ scatters forward in time and so we have 
\[
\norm{v_n^j}_{W([t_n,\infty))} \le \norm{\Psi^j}_{W(I_j)} < \infty.
\]
Applying Lemma~\ref{lem:unJ} and appealing to Proposition~\ref{thm:lpt}, it follows that
\begin{equation}\label{eq:approximation}
\limsup_{J\to\infty} \limsup_{n\to\infty}\sup_{t\in K_n^m}\norm{e^{-it\Delta}(u_n(t) - {u}_n^J(t))}_{\fhsc} = 0
\end{equation}
for each $m$. 

Now define
\[
c_j := \inf_{t \in I^j} \norm{e^{-it\Delta}\Psi^j(t)}_{\fhsc}
\]
for each $j$. We will show that $c_j=0$ for $j\ge2$, which implies that $\psi_j=0$ for $j\geq 2$, thus settling (i).

Suppose towards a contradiction that $c_{j_0} >0$ for some $j_0\ge2$.

Fix $\eps>0$. Choose $m=m(\eps)$ and an $m$-dependent subsequence in $n$ such that
\[
\sup_{t \in K_n^m} \norm{ e^{-it\Delta} v_n^1(t)}_{\fhsc}^2 \ge \E_c^2 -\eps.
\]
Next, using \eqref{eq:approximation}, choose $J=J(\eps)$ so that
\begin{align}\label{318}
\sup_{t\in K^m_n}\norm{e^{-it\Delta}(u_n(t) - {u}_n^J(t))}_{\fhsc}^2\le \eps
\end{align}
for all $n$ sufficiently large depending on $J$.  Without loss of generality, we may assume that $J>j_0$. 

Next, choose a sequence $\tilde t_n \in K_n^m$ so that 
\[
\norm{e^{-i\tilde t_n\Delta}v^1_n(\tilde t_n)}_{\fhsc}^2 \ge \sup_{t\in K_n^m} \norm{e^{-it\Delta}v^1_n(t)}_{\fhsc}^2 -\eps\geq \E_c^2-2\eps.
\]
We now claim that 
\begin{align}
\norm{e^{-i\tilde t_n\Delta}{u}_n^J(\tilde t_n)}_{\fhsc}^2  &= \sum_{j=1}^J \norm{e^{-i\tilde t_n\Delta}v^j_n(\tilde t_n)}_{\fhsc}^2 + \norm{W^J_n}_{\fhsc}^2  + o(1) \nonumber\\
&\geq  \sup_{t\in K_n^m} \norm{e^{-it\Delta}v^1_n(t)}_{\fhsc}^2 -\eps + c_{j_0}^2 + o(1)\label{later decouple}
\end{align}
as $n\to\infty$.  To see this, we need to show that
\begin{equation}
\langle e^{-i\tilde t_n\Delta} v_n^j(\tilde t_n), e^{-i\tilde t_n\Delta} v_n^k(\tilde t_n)\rangle + \langle  W_n^J, e^{-i\tilde t_n\Delta} v_n^j(\tilde t_n)\rangle \to 0 \qtq{as}n\to\infty  \label{cross1}
\end{equation}
whenever $1\leq j,k\leq J$ and $j\neq k$. Here $\langle f,g\rangle = \int |x|^{2|s_c|} f\bar g\,dx.$  The proof of \eqref{cross1} follows the argument in \cite{KV2}:  

The definition of $K_n^m$ guarantees that each $(h_n^j)^2 \tilde t_n$ belongs to a closed interval inside $I_j$ for all $1\leq j\leq J_1$.  Using also the fact that for $j>J_1$, the solutions $\Psi^j$ obey uniform bounds, we can find $f^j$ such that
\[
e^{-i(h_n^j)^2\tilde t_n\Delta} \Psi^j((h_n^j)^2 \tilde t_n) \to f^j\qtq{strongly in}\fhsc
\]
along a subsequence.  Using \eqref{eq:v_Psi}, we can replace $e^{-i\tilde t_n\Delta}v_n^j(\tilde t_n)$ by $e^{ix\xi_n^j}f^j_{\{h_n^j\}}$.  Thus \eqref{cross1} follows from the orthogonality \eqref{eq:pd2} and the weak decoupling \eqref{eq:pd5}.  

Continuing from \eqref{later decouple} and using the definition of $m$,  
\[
\sup_{t\in K_n^m} \norm{e^{-it\Delta}{u}_n^J(t)}_{\fhsc}^2 \ge \E_c^2 + c_{j_0}^2 - 3\eps
\]
for large $n$.  On the other hand, by assumption \eqref{asmp:PS1} and \eqref{318}, we have
\[
\sup_{t\in K_n^m} \norm{e^{-it\Delta}{u}_n^J(t)}_{\fhsc}^2 \le \E_c^2 + \eps
\]
for large $n$.  We deduce that
\[
\E_c^2 + \eps \geq \E_c^2 + c_{j_0}^2 - 3\eps.
\]
Choosing $\eps$ sufficiently small now contradicts $c_{j_0}>0$, which completes the proof of (i).  

Consequently, we have $W_n^J\equiv W_n^1$.  Arguing as above, we can similarly deduce that
\[
\limsup_{n\to\infty}\norm{W_n^1}_{\fhsc}^2 \le \eps
\] 
for any $\eps>0$, thus proving (ii).
\end{proof}

Returning to the decomposition \eqref{eq:key_proof1}, we have obtained part (i) of Proposition~\ref{prop:key} with $\psi:=\psi^1$, $\xi_n:=\xi_n^1$, and $h_n:=h_n^1$.

For part (ii), we take $u:=\Psi^1$, which we have already proven does not scatter forward in time.  Furthermore, arguing as in Lemma~\ref{lem:key_proof2}, 
\[
\sup_{t\in [\tau,T_{\max})} \norm{e^{-it\Delta}u(t)}_{\fhsc} \le \E_c.
\]
This implies that $\E_\infty \le \E_c <\infty$.  In light of Lemma~\ref{lem:E0}, we conclude that $\E_\infty=\E_c$ and that \eqref{sat} holds.  This completes the proof of Proposition~\ref{prop:key}. 
\end{proof}

We turn to the proof of Theorem~\ref{thm:apsol}.
\begin{proof}[Proof of Theorem~\ref{thm:apsol}]  We suppose that Theorem~\ref{T:main} fails, so that $\E_c<\infty$.

We take a sequence $\eps_n\to 0$ such that $L(\E_c-\eps_n)\geq n$.  In particular, there exist closed intervals $I_n\subset(0,\infty)$ and solutions $u_n$ on $I_n$ so that
\[
S_{I_n}(u_n)\geq n-1 \qtq{and} \sup_{t\in I_n} \cn{e^{-it\Delta}u_n(t)}\leq \E_c-\eps_n.
\]
We set $t_n =\min I_n$.  Note also that 
\[
\limsup_{n\to\infty} \sup_{t\in I_n}\|e^{-it\Delta}u_n(t)\|_{\fhsc} = \E_c,
\]
for otherwise $S_{I_n}(u_n)$ would remain bounded (cf. the definition of $\E_c$). 

This shows that the assumptions of Proposition~\ref{prop:key} are satisfied for the intervals $I_n$, the solutions $u_n$, and the times $t_n$.  Thus, $\E_c=\E_\infty$ and we can find a maximal-lifespan solution $u$ with $[0,\infty)\cap I_{\max}\neq \emptyset$, which does not scatter forward in time and satisfies
\begin{equation}\label{uisminimal}
\sup_{[\tau,T_{\max})} \|e^{-it\Delta}u(t)\|_{\fhsc} = \limsup_{t\uparrow T_{\max}} \|e^{-it\Delta}u(t)\|_{\fhsc} = \E_\infty
\end{equation}
for some $\tau\geq 0$. By scaling, we may assume that $I_{\max}\ni 1$.

To see that $u$ satisfies the conclusions of Theorem~\ref{thm:apsol}, it remains only to prove that $u$ is almost periodic on $[1,T_{\max})$, with $h(t)\lesssim_u t^{-\frac12}$.  Given a sequence $\{t_n\}\subset [1,T_{\max})$, the hypotheses of Proposition~\ref{prop:key} are satisfied with $u_n\equiv u$ and $I_n=[1,T_{\max})$, and so we deduce that $e^{-it_n\Delta}u(t_n)$ converges along a subsequence in $\fhsc$, modulo scaling and Galilei boosts.  This proves that $u$ is almost periodic on $[1,T_{\max})$.

To see that the scaling parameter satisfies $h(t)\lesssim_u t^{-\frac12}$, we argue by contradiction:  Any sequence $\{t_n\}\subset [1,T_{\max})$ such that $t_n h(t_n)^2\to\infty$ as $n\to\infty$ would lead to a counterexample to Proposition~\ref{prop:key}(i) by taking $u_n$ to be $u$ restricted to the interval $[t_n,T_{\max})$. 
\end{proof}

\section{Reduction to self-similar solutions}\label{S:reduction}
In Theorem~\ref{thm:apsol}, we showed that the failure of Theorem~\ref{T:main} implies the existence of almost periodic solutions.  In this section, we show that the failure of Theorem~\ref{T:main} implies the existence of a special type of almost periodic solution, namely, a \emph{self-similar} almost periodic solution. 

\begin{definition}[Self-similar]\label{Def:SS} An almost periodic solution $u$ to \eqref{nls} is called \emph{self-similar} if $I_{\max}=(0,\infty)$, $\xi(t)\equiv 0$, and $h(t)=t^{-\frac12}$. 
\end{definition}

We warn the reader that this is a broader notion than that commonly encountered.   By Remark~\ref{rmk:ap}, a solution $u$ is self-similar in our sense if and only if
\begin{equation}\label{ssK}
e^{-it\Delta}u(t) = t^{-\frac{1}{p}} \psi(t, \tfrac{x}{\sqrt{t}}),
\end{equation}
where $\psi(t)$ takes values in a pre-compact set $K\subset\fhsc$.  The traditional notion of self-similar solution corresponds to the case where the set $K$ consists of a single point.

\begin{theorem}[Reduction to self-similar solutions]\label{thm:ss}  If Theorem~\ref{T:main} fails, then there exists a self-similar almost periodic solution.
\end{theorem}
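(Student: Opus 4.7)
Plan. By Theorem~\ref{thm:apsol}, the failure of Theorem~\ref{T:main} yields an almost periodic, non-scattering, $\E_c$-saturating solution $u$ on $[1,T_{\max})$ with $h(t)\lesssim_u t^{-1/2}$. I construct the self-similar solution $w$ as a strong limit (via Proposition~\ref{prop:key}) of rescaled copies $v_n:=u_{[\sqrt{t_n}]}$ along $t_n\to T_{\max}$, each solving \eqref{nls} on $I_n:=(T_{\min}/t_n,T_{\max}/t_n)$. Scale invariance of the $\fhsc$- and $S$-norms makes each $v_n$ non-scattering with the same $\E_c$-bound and satisfying $S_{[1,\infty)\cap I_n}(v_n)=S_{[t_n,T_{\max})}(u)=\infty$ (the tail being infinite by the Banach-space triangle inequality in $L_t^{\tilde q,2}L_x^{\tilde r}$). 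In the case $T_{\max}=\infty$, to which one reduces via a preliminary rescaling (boundedness of $h$ inherited from $h(t)\lesssim t^{-1/2}$ precludes concentration blowup at finite $T_{\max}$), the intervals $I_n$ exhaust $(0,\infty)$ as $t_n\to\infty$.

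Proposition~\ref{prop:key} (applied with its reference time taken to be $1$) produces a nonzero $\psi\in\fhsc$, parameters $h_n>0$, $\xi_n\in\R^d$, and a maximal-lifespan non-scattering solution $w$ with $e^{-i\tau\Delta}w(\tau)=\psi$, $\tau:=\lim h_n^2\in[0,\infty)$. The scaling identity $e^{-i\Delta}v_n(1)=(e^{-it_n\Delta}u(t_n))_{\{\sqrt{t_n}\}}$ combined with the almost-periodic representation \eqref{eq:ap3} of $u$ gives
\[
e^{-i\Delta}v_n(1)=e^{ix\tilde\xi_n}[\psi(t_n)]_{\{\tilde h_n\}},\qquad \tilde h_n:=\sqrt{t_n}h(t_n),\quad\tilde\xi_n:=\sqrt{t_n}\xi(t_n),
\]
and pre-compactness of the orbit $\{\psi(t_n)\}$ yields (along a subsequence) $\psi(t_n)\to\psi_0$ in $\fhsc$ and $\tilde h_n\to\bar h\in[0,C]$. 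Matching with the profile decomposition of $e^{-i\Delta}v_n(1)$ then forces $h_n\sim\tilde h_n$, $\xi_n\sim\tilde\xi_n$, so $\tau=\bar h^2$. After arranging $\bar h\in(0,\infty)$ (by a judicious choice of $t_n$, or a secondary rescaling of $w$) and applying Proposition~\ref{prop:key} iteratively to $w$ itself as in Theorem~\ref{thm:apsol}, one obtains almost periodicity of $w$ on $[\tau,T_{\max}^w)$ with $h_w(t)\lesssim t^{-1/2}$.

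It remains to verify the defining properties of self-similarity: $I_{\max}^w=(0,\infty)$, $\xi_w\equiv 0$, and the matching lower bound $h_w(t)\gtrsim t^{-1/2}$. The inclusion $(0,\infty)\subset I_{\max}^w$ follows from the exhausting intervals $I_n$ combined with Strichartz-based stability (Proposition~\ref{thm:lpt}); the endpoints $T_{\min}^w=0$ and $T_{\max}^w=\infty$ are then forced because $h_w(t)\asymp t^{-1/2}$ requires $h_w(0^+)=\infty$, incompatible with continuity in $\fhsc$. The sharp lower bound $h_w(t)\gtrsim t^{-1/2}$ and the vanishing $\xi_w\equiv 0$ I would derive from the minimality of $\E_c$: any deficit at a time $t_0>0$ would allow the secondary rescaling $w_{[\sqrt{t_0}]}$ to be fed back into the extraction procedure, producing a solution of strictly smaller critical $\fhsc$-bound and contradicting minimality. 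This minimality-propagation step---upgrading a one-sided scale bound to the exact self-similar scaling while simultaneously pinning the frequency center to zero---is the main obstacle, as it requires careful tracking of the scaling and Galilei parameters in Proposition~\ref{prop:pd} and ruling out any leakage of mass into profiles with non-self-similar scaling.
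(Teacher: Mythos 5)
There is a genuine gap at exactly the point you flag as ``the main obstacle'': you never actually prove the two facts that make the limiting solution self-similar, namely the matching lower bound $h(t)\gtrsim_u t^{-1/2}$ and the normalization $\xi(t)\equiv 0$ (nor, strictly, forward globality $T_{\max}=\infty$, which you assert rather than derive). The paper obtains all three from a \emph{local constancy} property of the modulation parameters (Proposition~\ref{prop:lc}): using almost periodicity plus the stability theory (Corollary~\ref{C:stability}), one shows $h(t)\sim_u h(t_0)$ and $|\xi(t)-\xi(t_0)|\lesssim_u h(t_0)$ on windows $|t-t_0|\leq\delta h(t_0)^{-2}$. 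Combined with $h(t)\lesssim_u t^{-1/2}$ this yields $T_{\max}=\infty$, upgrades the one-sided bound to $h(t)\sim_u t^{-1/2}$, and, by summing a geometric series over the times $s_n=(1+\eta)^{n-1}$, shows $\xi(t)\to\xi_0$ with $|\xi(t)-\xi_0|\lesssim_u h(t)$; a single Galilei boost then sets $\xi\equiv 0$ and one may take $h(t)=t^{-1/2}$ exactly after enlarging the compactness modulus (Corollary~\ref{cor:ss}). Your proposed substitute---a ``minimality-propagation'' step in which a deficit in the scale or a nonzero frequency center would produce a solution with strictly smaller critical $\fhsc$-bound---does not work as sketched: the $\fhsc$-norm and the a priori bound \eqref{asmp:bdd} are invariant under both the rescaling \eqref{eq:scale1.5} and Galilei boosts, so rescaling $w_{[\sqrt{t_0}]}$ or re-feeding it into the extraction machinery cannot lower the critical bound, and minimality of $\E_c$ by itself places no constraint on the modulation parameters. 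Relatedly, your step ``arranging $\bar h\in(0,\infty)$ by a judicious choice of $t_n$'' presupposes a sequence with $t_nh(t_n)^2$ bounded away from zero, which is precisely what the missing lower bound on $h$ would provide; with only $h(t)\lesssim t^{-1/2}$ in hand you cannot rule out $t h(t)^2\to 0$, and then your limit lives at $\tau=0$ with no self-similar structure.

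A secondary, non-fatal difference: once the parameters are normalized, the paper does not need to re-run Proposition~\ref{prop:key} on the rescaled family. Since $e^{-it\Delta}v(t)=t^{-1/p}\psi(t,x/\sqrt t)$ with $\psi$ valued in a pre-compact set $K\subset\fhsc$, one simply extracts $\psi(t_n)\to\psi_1$ and lets $\Psi$ solve \eqref{nls} with $e^{-i\Delta}\Psi(1)=\psi_1$; the exact self-similar almost periodicity of $\Psi$ on $(0,\infty)$ then follows from continuity of the data-to-solution map and a change of variables, and non-scattering follows from Proposition~\ref{thm:lpt} exactly as you indicate. So your limiting/stability framework is fine; what is missing is the local-constancy input (or an equivalent argument) that pins $h$ and $\xi$ before taking the limit.
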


We first prove a local constancy property for the modulation parameters of almost periodic solutions. 

\begin{proposition}[Local constancy]\label{prop:lc}
Let $u$ be a solution to \eqref{nls} with $1\in I_{\max}$.  Suppose $u$ is almost periodic on $[1,T_{\max})$. Then there exists $\delta=\delta(u)>0$ such that
\begin{equation}\label{Idelta}
\{t\in\R:|t-t_0|\leq \delta h(t_0)^{-2}\}\subset I_{\max}\qtq{for any} t_0\in[1,T_{\max}).
\end{equation}
Furthermore,
\[
h(t)\sim_u h(t_0)\qtq{and} \tfrac{|\xi(t)-\xi(t_0)|}{h(t_0)} \lesssim_u 1\qtq{whenever} |t-t_0|\leq\delta h(t_0)^{-2}. 
\]
\end{proposition}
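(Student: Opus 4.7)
The approach is a compactness–perturbation scheme: produce a uniform small-data window of length $\sim h(t_0)^{-2}$, invoke the stability result (Corollary~\ref{C:stability}) to extend $u$ on this window while pinning $e^{-it\Delta}u(t)$ close to $e^{-it_0\Delta}u(t_0)$ in $\fhsc$, and deduce near-constancy of $(h,\xi)$ from a standard no-two-bubbles orthogonality argument in $\fhsc$.

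Set $M := \sup_{t\in[1,T_{\max})} \cn{e^{-it\Delta}u(t)} < \infty$ and let $\eps_0 = \eps_0(M)$ be the smallness threshold from Corollary~\ref{C:stability}.  By Remark~\ref{rmk:ap}, the orbit
\[
K := \bigl\{ \bigl(e^{-ix\xi(t)}e^{-it\Delta}u(t)\bigr)_{\{1/h(t)\}} : t \in [1, T_{\max}) \bigr\}
\]
is pre-compact in $\fhsc$.  The map $\psi \mapsto \|e^{i\tau\Delta}\psi\|_{S(J)}$ is continuous on $\fhsc$ by Strichartz (Proposition~\ref{prop:Strichartz}), and for each fixed $\psi$ the $S$-norm over an interval of length $\le 2\delta$ tends to zero uniformly in the interval's position as $\delta\downarrow 0$ (by absolute continuity of the $L_t^{\tilde q,2}$-norm).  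A finite $\fhsc$-net on $\bar K$ then yields $\delta = \delta(u) > 0$ with
\[
\sup_{\psi \in \bar K} \ \sup_{\substack{J \subset \R \\ |J| \leq 2\delta}} \|e^{i\tau\Delta}\psi\|_{S(J)} < \eps_0.
\]

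Fix $t_0 \in [1,T_{\max})$ and abbreviate $h = h(t_0)$, $\xi = \xi(t_0)$, $\psi_0 = \psi_{t_0} \in \bar K$, $I_{t_0} := [t_0 - \delta h^{-2},\, t_0 + \delta h^{-2}]$.  Using \eqref{eq:ap3} and \eqref{gal} to undo the Galilei boost and the scaling, together with the critical scaling relation $\tfrac{2}{\tilde q}+\tfrac{d}{\tilde r}=\tfrac{2}{p}$ satisfied by $S$, a direct change of variables in space and time yields
\[
\|e^{i(t-t_0)\Delta}u(t_0)\|_{S(I_{t_0})} = \|e^{i\tau\Delta}\psi_0\|_{S(h^2 I_{t_0})} < \eps_0,
\]
since $h^2 I_{t_0}$ has length $2\delta$.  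Corollary~\ref{C:stability} therefore gives $I_{t_0} \subset I_{\max}$ and, after applying $e^{-it\Delta}$,
\[
\sup_{t \in I_{t_0}} \cn{e^{-it\Delta}u(t) - e^{-it_0\Delta}u(t_0)} \lesssim_M \eps_0^\beta.
\]
This proves \eqref{Idelta}, and by initially choosing $\eps_0$ smaller (still depending only on $u$) the right-hand side becomes as small as we like.  If the bounds on $h,\xi$ failed, some sequence $t_n \in I_{t_0}$ would make one of $h(t_n)/h,\, h/h(t_n),\, |\xi(t_n)-\xi|/h$ diverge; passing to a subsequence with $\psi_{t_n}\to\psi_\infty$ in $\fhsc$, the non-triviality of $u$ (which via compactness gives $\inf_{\psi \in \bar K}\cn{\psi} > 0$) combined with the $\fhsc$-orthogonality principle underlying \eqref{eq:pd2}--\eqref{eq:pd3} would force
\[
\cn{e^{-it_n\Delta}u(t_n) - e^{-it_0\Delta}u(t_0)}^2 \longrightarrow \cn{\psi_\infty}^2 + \cn{\psi_0}^2 > 0,
\]
contradicting the preceding bound once $\eps_0$ is small enough relative to the lower bound on $\bar K$.

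The main obstacle is the first step: producing a small-data window of width $\sim h(t_0)^{-2}$ uniformly in $t_0$, which is precisely the point where scaling invariance and pre-compactness must cooperate.  Once that scale-aware smallness is in hand, the stability theorem and the standard asymptotic orthogonality of bubbles in $\fhsc$ complete the argument through routine bookkeeping.
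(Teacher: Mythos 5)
Your proof follows essentially the same route as the paper's: uniform smallness of the free evolution of $u(t_0)$ on windows of length $\sim \delta h(t_0)^{-2}$ via pre-compactness of the orbit, then Corollary~\ref{C:stability} to obtain \eqref{Idelta} together with $\cn{e^{-it\Delta}u(t)-e^{-it_0\Delta}u(t_0)}\lesssim_u \eps^\beta$, and finally a vanishing-cross-term argument to pin down $h$ and $\xi$. The only noteworthy difference is that you obtain smallness of the linear flow over \emph{all} intervals of length $\le 2\delta$ (via the absolute-continuity-plus-net argument in the Lorentz norm), which spares you the paper's use of the bound $t_0h(t_0)^2\lesssim_u 1$ coming from Theorem~\ref{thm:apsol}; working with the $S$-norm rather than $W$ is harmless since Corollary~\ref{C:stability} is stated in terms of $S$. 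Two small corrections are needed. First, the uniform lower bound $\inf_{\psi\in\bar{K}}\cn{\psi}>0$ does not follow from compactness and non-triviality alone, since $0$ could a priori be a limit point of the orbit; as in the paper, it comes from the small-data theory combined with the fact that $u$ does not scatter forward in time (if $\cn{\psi(t_n)}\to 0$, the solution would scatter from time $t_n$ onward). Second, because the asserted constants are uniform in $t_0$, the contradiction argument must be run on sequences of pairs $(t_{0,n},t_n)$ with $|t_n-t_{0,n}|\le \delta h(t_{0,n})^{-2}$, not for a fixed $t_0$; your compactness argument applies verbatim since both $\psi_{t_{0,n}}$ and $\psi_{t_n}$ lie in $\bar{K}$, so this is a one-line fix.
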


\begin{proof} Fix $\eps>0$.  We claim that there exists $\delta=\delta(\eps,u)>0$ such that
\begin{equation}\label{eq:lc1}
\norm{e^{i(t-t_0)\Delta}u(t_0)}_{W([t_0-\delta h(t_0)^{-2},t_0+\delta h(t_0)^{-2}])} \le \eps\qtq{for any}t_0\in [1,T_{\max}).
\end{equation}

Using almost periodicity, specifically \eqref{eq:ap3}, we first write
\[
 e^{-it_0\Delta} u(t_0,x) = e^{ix\xi(t_0)} h(t_0)^{\frac2{p}} \psi (t_0,h(t_0) x), 
\]
where $\psi$ takes values in a pre-compact set $K\subset\fhsc$.  Next, using \eqref{gal},
\begin{align*}
\text{LHS\eqref{eq:lc1}} = \norm{e^{it\Delta}( &e^{ix\xi(t_0)} h(t_0)^{\frac2{p}} \psi (t_0,h(t_0) x)) }_{W([t_0-\delta h(t_0)^{-2},t_0+\delta h(t_0)^{-2}])} \\
&= \norm{e^{it\Delta} \psi(t_0)}_{W([t_0h(t_0)^2 -\delta, t_0h(t_0)^2 + \delta])}.
\end{align*}
Now recall that there exists $M=M(u)$ so that $0\leq t_0 h(t_0)^2 \leq M$. By compactness, there exists $\delta>0$ so that
\[
	\sup_{\phi \in K} \sup_{\sigma \in [0,M]} \norm{e^{it\Delta} \phi}_{W([\sigma -\delta, \sigma + \delta])} \le {\eps}.
\]
With this $\delta$, we have \eqref{eq:lc1}.

Choosing $\eps$ small, we can apply Corollary~\ref{C:stability} on the interval 
\[
[t_0-\delta h(t_0)^{-2},t_0+\delta h(t_0)^{-2}].
\]  This implies \eqref{Idelta} and shows that
\begin{equation}\label{eq:lc2}
\norm{e^{-it\Delta} u(t) - e^{-it_0\Delta}u(t_0)}_{\fhsc} \lesssim \eps^\beta \qtq{whenever}|t-t_0|\leq\delta h(t_0)^{-2}.
\end{equation}

We are now in a position to complete the proof of the proposition.  Suppose towards a contradiction that there exist sequences $\{t_n\}$ and $\{t_n' \}$
with $|t_n - t_n'| \le \delta h(t_n')^{-2}$ satisfying
\begin{equation}\label{eq:lc3}
	\bigl| \log \tfrac{h(t_n)}{h(t_n')} \bigr| + \bigl| \tfrac{\xi(t_n)-\xi(t_n')}{h(t_n')} \bigr| \to \infty\qtq{as}n\to\infty.
\end{equation}
By  \eqref{eq:lc2}, we have
\begin{equation}\label{const1}
\norm{e^{-it_n\Delta} u(t_n) - e^{-it_n'\Delta}u(t_n')}_{\fhsc} \lesssim\eps^\beta.
\end{equation}
On the other hand, a change of variables shows
\begin{align}
\| & e^{-it_n\Delta} u(t_n) - e^{-it_n'\Delta}u(t_n')\|_{\fhsc}^2 = \|\psi(t_n)\|_{\fhsc}^2 + \|\psi(t_n')\|_{\fhsc}^2 \label{eq:lc4}\\
&+  2\Re \int_{\R^d} e^{\frac{ix[\xi(t_n)-\xi(t_n')]}{h(t_n')}} \bigl(\tfrac{h(t_n)}{h(t_n')}\bigr)^{\frac{d}2}
	\bigl|\tfrac{h(t_n)}{h(t_n')} x\bigr|^{|s_c|} \psi\bigl(t_n,\tfrac{h(t_n)}{h(t_n')} x\bigr) |x|^{|s_c|}\bar\psi(t_n',x) \,dx. \notag
\end{align}
By the small data theory and the fact that $u$ does not scatter, we have that $e^{-it\Delta}u(t)$, and hence $\psi(t)$, remains bounded away from zero in $\fhsc$ uniformly in $t$.  The third term tends to zero as $n\to\infty$, as can be seen from \eqref{eq:lc3} and the fact that $\psi$ takes values in a compact set.  

Comparing \eqref{const1} and \eqref{eq:lc4}, we now reach a contradiction by choosing $\eps=\eps(u)$ sufficiently small.
\end{proof} 

\begin{corollary}\label{cor:ss} Suppose $u$ is a solution as in Theorem~\ref{thm:apsol}. Then the following hold: 
\begin{itemize}
\item[(i)] $u$ is forward global,
\item[(ii)] $h(t)\sim_u t^{-\frac12}$ for $t\geq 1$, 
\item[(iii)] There exists $\xi_0\in\R^d$ such that $|\xi(t)-\xi_0|\lesssim_u h(t)$ for $t\geq 1$. 
\end{itemize}
\end{corollary}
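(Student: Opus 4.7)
The strategy is to extract everything from Proposition~\ref{prop:lc}, combined with the \emph{a priori} upper bound $h(t) \leq C_u t^{-1/2}$ on $[1, T_{\max})$ supplied by Theorem~\ref{thm:apsol}. Let $\delta = \delta(u) > 0$ denote the local constancy radius from Proposition~\ref{prop:lc}. For (i), I argue by contradiction: if $T_{\max} < \infty$, then for any $t_0 \in [1, T_{\max})$ the inclusion $[t_0 - \delta h(t_0)^{-2}, t_0 + \delta h(t_0)^{-2}] \subset I_{\max}$, together with $\delta h(t_0)^{-2} \geq \delta C_u^{-2} t_0$, shows that $(1 + \delta C_u^{-2}) t_0 \in I_{\max}$. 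Taking $t_0$ sufficiently close to $T_{\max}$ produces an element of $I_{\max}$ strictly greater than $T_{\max}$, a contradiction.

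For the lower bound in (ii), I suppose for contradiction that $h(t_n)^2 t_n \to 0$ along some sequence $\{t_n\} \subset [1, \infty)$. Since $t_n \geq 1$, this forces $h(t_n) \to 0$. If $\{t_n\}$ were bounded, after passing to a subsequence $t_n \to t_* \in [1, \infty)$, Proposition~\ref{prop:lc} applied at $t_*$ gives $h(t_n) \sim_u h(t_*) > 0$ for $n$ large, contradicting $h(t_n) \to 0$. Hence $t_n \to \infty$, and then $\delta h(t_n)^{-2} > t_n$ eventually, so the local constancy interval around $t_n$ contains the point $1$; Proposition~\ref{prop:lc} then forces $h(1) \sim_u h(t_n) \to 0$, another contradiction. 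I expect this step to be the main obstacle, as it requires leveraging the upper bound twice---first to rule out a bounded subsequence, and then to stretch the local constancy interval all the way back to $t = 1$ in the unbounded case.

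For (iii), I combine (ii) with the $\xi$-local-constancy. Fix $\lambda > 1$ small enough that $[t_0, \lambda t_0]$ lies inside $[t_0 - \delta h(t_0)^{-2}, t_0 + \delta h(t_0)^{-2}]$ for every $t_0 \geq 1$; this is possible precisely because (ii) gives $\delta h(t_0)^{-2} \sim_u t_0$. Setting $t_n := \lambda^n$, Proposition~\ref{prop:lc} yields
\[
|\xi(t_{n+1}) - \xi(t_n)| \lesssim_u h(t_n) \sim_u \lambda^{-n/2},
\]
a geometrically summable tail, so $\{\xi(t_n)\}$ is Cauchy and converges to some $\xi_0 \in \R^d$. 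For general $t \geq 1$, pick $n$ with $t \in [t_n, t_{n+1}]$; the triangle inequality together with the telescoping bound $|\xi(t_n) - \xi_0| \lesssim_u \lambda^{-n/2}$ and the local constancy bound $|\xi(t) - \xi(t_n)| \lesssim_u h(t_n)$ gives $|\xi(t) - \xi_0| \lesssim_u h(t_n) \sim_u h(t)$, as desired.
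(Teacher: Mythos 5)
Your proof is correct and follows essentially the same approach as the paper: part (i) from the local-constancy interval \eqref{Idelta} together with the upper bound $h(t)\lesssim_u t^{-1/2}$, part (ii) by contradiction via Proposition~\ref{prop:lc}, and part (iii) by a geometric partition of $[1,\infty)$ and a telescoping/Cauchy argument. The only minor inefficiency is the case split in (ii): the paper observes directly that $t_n h(t_n)^2\to 0$ already gives $|1-t_n|\,h(t_n)^2\le t_n h(t_n)^2\le\delta$ for large $n$ (since $t_n\ge 1$), so your ``unbounded'' branch in fact handles all cases at once and the bounded/unbounded dichotomy is unnecessary.
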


\begin{proof} Recall from Theorem~\ref{thm:apsol} that $h(t)\lesssim_u t^{-\frac12}$ for $t\geq 1$.  Thus item (i) follows immediately from \eqref{Idelta}.

For (ii), we argue by contradiction and suppose there is a sequence $t_n\geq 1$ satisfying 
\[
t_n h(t_n)^2\to 0\qtq{as} n\to\infty.
\]  
In particular, $h(t_n)\to 0$ as $n\to \infty$.  Choosing $\delta$ as in Proposition~\ref{prop:lc}, we have 
\[
|1-t_n| h(t_n)^2\leq \delta\qtq{for all large} n.
\]  
Proposition~\ref{prop:lc} now gives $h(t_n)\sim_u h(1)$ for all $n$ large, a contradiction. 

For (iii), we take $\delta$ as in Proposition~\ref{prop:lc} and fix $\eta>0$ so that
\[
\eta\cdot \sup_{t\geq 1} [t\cdot h(t)^2] \leq \delta.
\]
Define
\[
s_n=(1+\eta)^{n-1}\qtq{and} I_n=[s_n,s_{n+1}]\qtq{for}n\geq1.
\]
Then $[1,\infty)=\cup_{n\geq 1} I_n$, and by definition we have
\[
|t-s_n|\leq |s_{n+1}-s_n|=\eta s_n \leq \delta h(s_n)^{-2}\qtq{for any}n\geq1 \qtq{and}t\in I_n.
\]
By Proposition~\ref{prop:lc}, this gives
\begin{equation}\label{eq:ss1}
\sup_{t\in I_n}\tfrac{|\xi(t)-\xi(s_n)|}{h(s_n)} \lesssim_u 1\qtq{for all}n \geq 1.
\end{equation}  
In particular, as $h(s_n)\sim_u (1+\eta)^{-\frac{n-1}{2}}$, we have that
\[
	|\xi(s_n)-\xi(s_m)| \le \sum_{j=m}^{n-1} |\xi(s_{j+1})-\xi(s_j)|
	\lesssim_u \sum_{j=m}^{n-1} h(s_j)
	\lesssim_{u} h(s_m)\qtq{for}n>m.
\]
Thus $\{\xi(s_n)\}$ is a Cauchy sequence and so it converges to some $\xi_0\in\R^d$.

Now for any $t\geq 1$, there exists $I_n\ni t$, and we have
\[
|\xi(t)-\xi_0|\leq |\xi(t)-\xi(s_n)| + |\xi(s_n)-\xi_0|  \lesssim_u h(s_n) \sim_u h(t). 
\]
This completes the proof of Corollary~\ref{cor:ss}. 
\end{proof}

We turn to the proof of Theorem~\ref{thm:ss}.

\begin{proof}[Proof of Theorem \ref{thm:ss}]  We suppose Theorem~\ref{T:main} fails. Then Theorem~\ref{thm:apsol} gives us an almost periodic solution $u$ with modulation parameters $\xi(t)$ and $h(t)$ and compactness modulus $C(\eta)$.  Furthermore,  $u$ satisfies the conclusions of Corollary~\ref{cor:ss}. 

We first claim that $u$ is almost periodic with respect to parameters $\tilde\xi(t)\equiv \xi_0$ and $h(t)$ and compactness modulus $\tilde C(\eta)=C(\eta)+c(u)$ for some constant $c(u)$.  To see this, recall \eqref{eq:ap1} and note that by Corollary~\ref{cor:ss}(iii), we have
\[
|\xi - \xi(t)| \ge |\xi-\xi_0| - |\xi(t)-\xi_0| \ge |\xi-\xi_0| - c(u)h(t)
\]
for some $c(u)>0$. 

We can now define a solution $v$ via 
\[
e^{-it\Delta}v(t) = e^{-ix\xi_0}e^{-it\Delta}u(t),\qtq{that is,} v(t,x)=e^{-ix\xi_0-it|\xi_0|^2}u(t,x+2t\xi_0). 
\]
It follows that $v$ is almost periodic with parameters $h(t)$ and $\xi(t)\equiv 0$ and compactness modulus $\tilde C(\eta)$. 

As Corollary~\ref{cor:ss} implies $h(t)\sim_u t^{-\frac12}$ for $t\geq 1$, we can set $h(t)=t^{-\frac12}$, provided we modify $\tilde C(\eta)$ by another constant depending on $u$.

We have thus constructed an almost periodic solution $v:[1,\infty)\times\R^d\to\C$ that blows up forward in time, with modulation parameters $\xi(t)\equiv 0$ and $h(t)=t^{-\frac12}$. Equivalently, we can write
\[
[e^{-it\Delta} v(t)](x) = t^{-\frac{1}{p}} \psi(t, \tfrac{x}{\sqrt{t}})\qtq{for} t\geq 1,
\]
where $\psi$ takes values in a pre-compact set $K\subset\fhsc$. 

Now take a sequence $1\leq t_n\to\infty$ and define the normalized solutions
\[
v^{[t_n]}(t,x) := t_n^{\frac1{p}} v(t_nt,\sqrt{t_n}x).
\]
In particular, 
\[
v^{[t_n]}(1,x) = t_n^{\frac1{p}} v(t_n, \sqrt{t_n} x)= [e^{i\Delta} \psi(t_n)](x).
\]
Passing to a subsequence, we see that $v^{[t_n]}(1)\to e^{i\Delta}\psi_1$ strongly in $e^{i\Delta}\fhsc$ for some non-zero $\psi_1\in\fhsc$. 

Now let $\Psi$ be a solution to \eqref{nls} with data $e^{-i\Delta}\Psi(1)=\psi_1$.  We will show that $\Psi$ satisfies the conclusions of Theorem~\ref{thm:ss}.  That $\Psi$ is almost periodic on $I_{\max}\cap (0, \infty)$ with parameters $h(t)=t^{-\frac12}$ and $\xi(t)\equiv 0$ and compactness modulus $\tilde C(\eta)$ follows from the continuity of the data to solution map and a change of variables:
\begin{align*}
\int_{|x|\geq \tilde C(\eta)\sqrt{t}} \bigl||x|^{|s_c|}e^{-it\Delta}\Psi(t)\bigr|^2 \,dx & = \lim_{n\to\infty} \int_{|x|\geq \tilde C(\eta)\sqrt{t}}  \bigl||x|^{|s_c|}e^{-it\Delta}v^{[t_n]}(t)\bigr|^2\,dx \\
& = \lim_{n\to\infty} \int_{|x|\geq\tilde C(\eta)\sqrt{t_nt}}\bigl||x|^{|s_c|}e^{-it_nt\Delta}v(t_nt)\bigr|^2\,dx \leq\eta
\end{align*}
for any $\eta>0$ and $t>0$.  Similarly,
\[
\int_{|\xi|\geq\tilde C(\eta) t^{-\frac12}} \bigl|\,|\nabla|^{s_c}\F e^{-it\Delta}\Psi(t)\bigr|^2\,d\xi \leq \eta
\]
for any $\eta>0$ and $t>0$.  In particular, as $h(t)=t^{-\frac12}$, Proposition~\ref{prop:lc} implies that $I_{\max} \supseteq (0, \infty)$.  The fact that $h(t)$ blows up as $t\to 0$ shows that the solution cannot be extended continuously to $t=0$.  Thus $I_{\max} = (0, \infty)$.

To see that $\Psi$ blows up forward in time, we argue by contradiction: If $\|\Psi\|_{S([1, \infty))}<\infty$, then by Proposition~\ref{thm:lpt},
\[
\norm{v}_{S([t_n, \infty))}=\norm{v^{[t_n]}}_{S([1, \infty))} < \infty\quad \text{for $n$ sufficiently large},
\]
which contradicts the fact that $v$ blows up forward in time.  This completes the proof of Theorem~\ref{thm:ss}. 
\end{proof}

\section{Preclusion of self-similar solutions}\label{S:preclusion}

 The goal of this section is to prove the following:

\begin{theorem}[No self-similar solutions]\label{noss}  There are no self-similar almost periodic solutions to \eqref{nls}. 
\end{theorem}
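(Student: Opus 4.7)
The plan is to derive a contradiction by showing that any self-similar almost periodic solution $u$ must belong to $L_x^2$ at some positive time $t_0$. Since $p<\frac{4}{d}$, data in $L_x^2$ yields a global solution by the subcritical $L^2$ local theory and mass conservation; in particular such a $u$ would extend continuously across and past $t=0$, contradicting the defining property $I_{\max}=(0,\infty)$ of self-similar solutions. The mechanism for upgrading the weighted regularity $e^{-it\Delta}u(t)\in\fhsc$ to the unweighted endpoint $L_x^2$ is inspired by the treatment of the radial mass-critical NLS in \cite{KTV, KVZ}.

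The first step is to establish a forward-in-time Duhamel representation
\[
e^{-it_0\Delta}u(t_0) \;=\; -i\mu \int_{t_0}^\infty e^{-is\Delta}\bigl(|u|^p u\bigr)(s)\,ds,
\]
interpreted as a weak limit in $\fhsc$. The boundary contribution $e^{-iT\Delta}u(T)$ vanishes weakly as $T\to\infty$: indeed, the self-similar scaling $h(T)=T^{-1/2}\to 0$ together with the Fourier-side half of the almost-periodicity condition \eqref{eq:ap1} forces $|\nabla|^{|s_c|}\widehat{f}(T)$ to concentrate in a shrinking neighborhood of $\xi=0$, hence to converge weakly to $0$ in $L_x^2$.

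The heart of the argument is a bootstrap on a regularity index $s\in[0,|s_c|]$. Assuming $e^{-it\Delta}u(t)$ is uniformly bounded in $\F\dot H^s$ on $[t_0,\infty)$, I would apply the dual Strichartz estimate of Proposition~\ref{prop:Strichartz} to the Duhamel integral, combined with the paraproduct bound of Lemma~\ref{L:parap}, to place $e^{-it_0\Delta}u(t_0)$ in $\F\dot H^{s-\varepsilon}$ for a universal $\varepsilon=\varepsilon(d,p)>0$. The required $W$- and $S$-norm inputs for $u$ come from almost periodicity together with the pointwise decay \eqref{strong-hyp}, and the self-similar relation $t\,h(t)^2\equiv 1$ is used to convert the time weight appearing in $|t\nabla|^{-\varepsilon}$ into the spatial weights that define $\F\dot H^s$. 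Iterating this fixed gain $\varepsilon$ finitely many times brings $s$ all the way to $0$, yielding $u(t_0)\in L_x^2$.

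The main technical obstacle is closing the bootstrap at the endpoint: one must verify that the paraproduct constants of Lemma~\ref{L:parap} do not degenerate as $s\downarrow 0$, that the Strichartz pairs remain admissible throughout the iteration, and that the weak identity above can be legitimately paired with test functions in the dual spaces that arise at each stage. Once $u(t_0)\in L_x^2$ is in hand, mass conservation gives $u(t)\in L_x^2$ on all of $I_{\max}$, and the mass-subcritical $L_x^2$-theory then produces an extension of $u$ across $t=0$, contradicting $I_{\max}=(0,\infty)$ and completing the proof.
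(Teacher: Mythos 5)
Your overall strategy matches the paper's: use a forward-in-time reduced Duhamel formula (the paper's Proposition~\ref{P:RD}) and a gain-of-regularity bootstrap to place a self-similar solution in $L^2_x$, then contradict the blowup at $t=0$ via mass conservation. (The paper in fact closes even more simply: once $\M(A)\lesssim_u A^{|s_c|+\eps}$ is in hand, it gives $\|u(t)\|_{L^2}\lesssim_u t^{-|s_c|/2}\to 0$, so conserved mass forces $u\equiv 0$; this avoids any concern about matching the $C_t\dot X^{|s_c|}$ and $C_t L^2$ solution notions.) The KTV/KVZ inspiration and the dispersive/Strichartz/paraproduct toolkit are also all correct.

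The genuine gap is the bootstrap mechanism. You propose to iterate directly on the Sobolev index, ``bounded in $\F\dot H^s$ implies bounded in $\F\dot H^{s-\eps}$.'' That step does not close: applying Strichartz at regularity $s-\eps$ to the Duhamel integral requires bounding $F(u)$ in a $\dot X^{s-\eps}$-type dual norm, and every fractional chain rule or paraproduct estimate for $F(u)$ at regularity $s-\eps$ demands control of $u$ at regularity $s-\eps$ --- exactly what you are trying to prove. Lemma~\ref{L:parap} by itself transfers a negative power of $t\nabla$ from $F(\tilde u+\tilde v)-F(\tilde u)$ to $\tilde v$; it does not by itself lower the regularity at which $u$ must be controlled. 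The paper sidesteps this circularity by bootstrapping a \emph{different} parameter: it keeps the regularity pinned at $|s_c|$ and introduces a spatial truncation scale $A$ in the interaction variable, defining $\M(A),\S(A),\N(A)$ using the cutoffs $J^{|s_c|}_{\le A T^{1/2}}$. The $\eps$ of gain then comes from a concrete Bernstein step: after splitting $F(\tilde u)=F(\tilde u_{\le AT^{1/2}t^{-1}})+[F(\tilde u)-F(\tilde u_{\le AT^{1/2}t^{-1}})]$, the second piece, projected to low frequencies, satisfies
\[
\bigl\||t\nabla|^{|s_c|}P_{\le AT^{1/2}t^{-1}}(\cdots)\bigr\|\lesssim (AT^{1/2})^{|s_c|+\eps}\bigl\||t\nabla|^{-\eps}(\cdots)\bigr\|,
\]
after which Lemma~\ref{L:parap} with $v=-M(t)P_{>AT^{1/2}t^{-1}}\tilde u$ yields the summable weights $(A/N)^{|s_c|+\eps}$ in Lemma~\ref{L:ss2}. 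The iteration $\M(A)\lesssim A^\delta\Rightarrow\S(A)\lesssim A^{\delta+\delta_0}\Rightarrow\N(A)\lesssim A^{\delta+\delta_0}\Rightarrow\M(A)\lesssim A^{\delta+\delta_0}$ runs in finitely many steps to $\M(A)\lesssim A^{|s_c|+\eps}$, and only at the end is this converted to $u(t)\in L^2$ by summing $(B\sqrt t)^{-|s_c|}\M(B)$ over dyadic $B\le A$ --- a sum that converges \emph{only} because of the extra $\eps$. Finally, you should not dismiss the uniform bound $\sup_{T>0}\|u\|_{W([T,2T])}\lesssim_u 1$ (Proposition~\ref{P:boci}) as following from \eqref{strong-hyp}: it is a nontrivial consequence of almost periodicity, the self-similar scaling $h(t)=t^{-1/2}$, and the stability theory, and it is used in an essential way in Lemma~\ref{L:ss2}.
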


Together with Theorem~\ref{thm:ss}, Theorem~\ref{noss} implies the main result, Theorem~\ref{T:main}.  We prove Theorem~\ref{noss} by contradiction. In particular, we show that self-similar solutions must belong to $L_x^2$.  However, by conservation of mass and $L_x^2$-subcriticality, this would imply that self-similar solutions are global, contradicting the fact that they blow up at $t=0$.

We begin by collecting some properties of self-similar solutions.  The first result is the following reduced Duhamel formula.

\begin{proposition}[Reduced Duhamel formula]\label{P:RD} Suppose $u$ is a self-similar almost periodic solution to \eqref{nls}. Then 
\[
\wlim_{t\to\infty} e^{-it\Delta}u(t) = \wlim_{t\to 0} e^{-it\Delta} u(t) = 0,
\]
where $\wlim$ indicates that the limit is taken in the weak topology on $\fhsc$.  In particular, for all $t\in (0,\infty)$,
\begin{align*}
e^{-it\Delta}u(t) = \wlim_{T\to\infty} i\mu\int_t^T e^{-is\Delta}\bigl(|u|^pu)(s)\,ds = -\wlim_{T\to 0} i\mu\int_T^t e^{-is\Delta}(|u|^p u)(s)\,ds.
\end{align*}
\end{proposition}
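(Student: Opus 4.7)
The plan is to deduce the reduced Duhamel formulas immediately from the two weak limits $e^{-it\Delta}u(t)\rightharpoonup 0$ in $\fhsc$ as $t\to\infty$ and as $t\to 0^+$. Rearranging the standard Duhamel identity of Definition~\ref{D:solution} gives
\[
i\mu\int_t^T e^{-is\Delta}\bigl(|u|^pu\bigr)(s)\,ds = e^{-it\Delta}u(t) - e^{-iT\Delta}u(T),
\]
so if the boundary term vanishes weakly as $T\to\infty$, the integral converges weakly to $e^{-it\Delta}u(t)$; the analogous rearrangement with $T\to 0^+$ yields the second reduced formula. Thus the whole task reduces to establishing the two weak limits.

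Using self-similarity (cf.\ \eqref{ssK}), write $f(t):=e^{-it\Delta}u(t) = (\psi(t))_{\{1/\sqrt{t}\}}$, where $\psi(t)$ takes values in a pre-compact set $K\subset \fhsc$. The $\fhsc$ norm is invariant under the rescaling $\phi\mapsto\phi_{\{\lambda\}}$ (an elementary exponent count using $|s_c|=\tfrac{2}{p}-\tfrac{d}{2}$), so $\sup_{t}\|f(t)\|_{\fhsc} = \sup_K\|\cdot\|_{\fhsc}<\infty$. Given any sequence $t_n$ tending to $\infty$ (or to $0^+$), pre-compactness lets us extract a subsequence along which $\psi(t_n)\to \psi^*$ strongly in $\fhsc$; then $f(t_n) = (\psi^*)_{\{1/\sqrt{t_n}\}} + o_{\fhsc}(1)$. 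By a standard subsequence argument using the uniform $\fhsc$ bound, the two weak limits will follow once one shows, for every $\psi^*\in\fhsc$,
\[
(\psi^*)_{\{\lambda\}} \rightharpoonup 0 \text{ in } \fhsc \qtq{as} \lambda\to 0^+ \qtq{and as} \lambda\to\infty.
\]

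To test these weak limits, I use that the map $\varphi\mapsto |x|^{|s_c|}\varphi$ is an isometric isomorphism $\fhsc\to L_x^2$, so functions of the form $\varphi = |x|^{-|s_c|}\tilde\varphi$ with $\tilde\varphi\in C_c^\infty(\R^d\setminus\{0\})$ supported in an annulus $\{\epsilon\le|x|\le R\}$ are dense in $\fhsc$. For such a $\varphi$, the change of variables $y=\lambda x$ in $\langle (\psi^*)_{\{\lambda\}},\varphi\rangle_{\fhsc}=\int |x|^{2|s_c|}(\psi^*)_{\{\lambda\}}\overline{\varphi}\,dx$ (collecting powers via $\tfrac{2}{p}-|s_c|-d=-\tfrac{d}{2}$) yields
\[
\bigl\langle(\psi^*)_{\{\lambda\}},\varphi\bigr\rangle_{\fhsc} = \lambda^{-d/2}\int_{\epsilon\lambda\le|y|\le R\lambda} |y|^{|s_c|}\psi^*(y)\,\overline{\tilde\varphi(y/\lambda)}\,dy,
\]
which Cauchy--Schwarz (together with $\|\tilde\varphi(\cdot/\lambda)\|_{L^2}=\lambda^{d/2}\|\tilde\varphi\|_{L^2}$) bounds by
\[
\|\varphi\|_{\fhsc}\Bigl(\int_{\epsilon\lambda\le|y|\le R\lambda} |y|^{2|s_c|}|\psi^*(y)|^2\,dy\Bigr)^{1/2}.
\]
Since $|y|^{2|s_c|}|\psi^*|^2\in L^1(\R^d)$, the absolute continuity of the Lebesgue integral forces this to vanish both as $\lambda\to 0$ (the annulus shrinks to a null set) and as $\lambda\to\infty$ (the annulus recedes to infinity).

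The main subtlety I anticipate is the choice of test functions. A naive test against $\varphi\in C_c^\infty(\R^d)$ yields only a bounded, not a vanishing, estimate: the weight $|x|^{2|s_c|}$ in the $\fhsc$ inner product precisely cancels the scaling behaviour of $(\psi^*)_{\{\lambda\}}$. Absorbing this weight into $\tilde\varphi:=|x|^{|s_c|}\varphi$ restores the power mismatch needed to exploit the $L^1$ integrability of $|y|^{2|s_c|}|\psi^*|^2$, after which vanishing of the tail and vanishing of an $L^1$ integral over shrinking sets give the desired limits in both directions. All remaining ingredients --- the subsequence/precompactness argument, the scale-invariance of the $\fhsc$ norm, and the passage to the weak limit in Duhamel --- are routine.
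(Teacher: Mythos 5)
Your proof is correct and follows essentially the same route as the paper, which disposes of the proposition in one sentence by invoking the self-similar form \eqref{ssK} and a change of variables. Your writeup supplies the details the paper leaves implicit --- the choice of dense test functions $|x|^{-|s_c|}C_c^\infty(\mathbb{R}^d\setminus\{0\})$, the use of pre-compactness of $K$ via a subsequence argument, the exponent bookkeeping, and the reduction of the Duhamel statement to the two weak vanishing limits --- and each step checks out.
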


\begin{proof} It suffices to note that
\[
\lim_{t\to\infty} \langle e^{-it\Delta} u(t),\phi\rangle = \lim_{t\to 0}\langle e^{-it\Delta} u(t),\phi\rangle = 0
\]
for all test functions $\phi$, which follows from \eqref{ssK} and a change of variables. 
\end{proof}

Next, we show that almost periodic solutions obey uniform space-time bounds on dyadic intervals $[T,2T]\subset(0,\infty)$.

\begin{proposition}\label{P:boci} Suppose $u$ is a self-similar almost periodic solution to \eqref{nls}.  Then
\begin{equation}\label{E:boci}
\sup_{T\in(0,\infty)}\|u\|_{W(T,2T)} \lesssim_u 1.
\end{equation}
\end{proposition}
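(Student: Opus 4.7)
The plan is to exploit the scaling invariance of the self-similar class together with the local constancy property (Proposition~\ref{prop:lc}). First I would observe that the self-similar class is preserved under the rescaling~\eqref{eq:scale1}: a direct computation using the intertwining $e^{-it\Delta}u_{[\lambda]}(t,x) = \lambda^{2/p}[e^{-i\lambda^2 t\Delta}u(\lambda^2 t)](\lambda x)$ combined with~\eqref{ssK} yields
\[
[e^{-it\Delta}u_{[\lambda]}(t)](x) = t^{-1/p}\psi(\lambda^2 t, x/\sqrt{t}),
\]
so $u_{[\lambda]}$ is again self-similar with orbit $\{\psi(\lambda^2 t)\}_{t>0}$ contained in the same pre-compact set $K\subset\fhsc$. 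Since the $W$-norm is invariant under~\eqref{eq:scale1},
\[
\|u\|_{W(T,2T)} = \|u_{[\sqrt{T}]}\|_{W(1,2)},
\]
so it suffices to prove a uniform bound $\|v\|_{W(1,2)}\lesssim_K 1$ for every self-similar solution $v$ whose orbit lies in $K$.

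Next I would reproduce the $W$-estimate of the form (4.1) derived inside the proof of Proposition~\ref{prop:lc}. For a self-similar solution one has $h(t_0)^{-2} = t_0$, so for any fixed small $\eps>0$ the compactness of $K$, together with Strichartz and dominated convergence, furnishes some $\delta = \delta(K,\eps)>0$ such that
\[
\|e^{i(t-t_0)\Delta}u(t_0)\|_{W([t_0-\delta t_0,\,t_0+\delta t_0])} \leq \eps\qtq{for every} t_0\in[1,2].
\]
Choosing $\eps$ small enough to invoke Corollary~\ref{C:stability} on each such window then yields $\|u\|_{W([t_0-\delta t_0,\,t_0+\delta t_0])} \lesssim 1$, with the implicit constant depending only on $K$.

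Finally I would cover $[1,2]$ by a finite geometric progression $t_k = (1+\tfrac{\delta}{2})^k$, requiring only $N = O(1/\delta) = O_u(1)$ windows. Each constituent $L_t^{q_j,2}\dot X^{|s_c|,r_j}$ of $W$ is a Banach space (the Lorentz spaces $L^{q,2}$ with $q\in(1,\infty)$ admit an equivalent norm), so the triangle inequality over this finite partition gives
\[
\|u\|_{W(1,2)} \leq \sum_{k=1}^N \|u\|_{W(I_k)} \lesssim_u 1,
\]
which, combined with the scaling reduction above, establishes~\eqref{E:boci} for every $T\in(0,\infty)$.

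The main technical point to watch is that the $\delta$ produced by the local constancy argument must depend only on $K$ (and $\eps$), not on the particular self-similar solution chosen after the scaling rearrangement—otherwise the final bound is not uniform in $T$. This is automatic here because both the pre-compact orbit $K$ and the compactness modulus are preserved under the mapping $u\mapsto u_{[\lambda]}$ within the self-similar class, as the first paragraph shows, so the Strichartz/compactness step yielding~(4.1) is genuinely uniform across the whole scaling family.
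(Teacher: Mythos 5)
Your proof is correct and takes essentially the same approach as the paper: you rerun the compactness/Strichartz argument from Proposition~\ref{prop:lc}, apply Corollary~\ref{C:stability} on a small relative window, and then cover a dyadic interval by $O_u(1)$ such windows. The only difference is your preliminary scaling reduction to $T=1$, which the paper bypasses by noting directly that for self-similar solutions $t_0 h(t_0)^2\equiv 1$, so the local estimate \eqref{eq:lc1} already holds with a single $\delta=\delta(u,\eps)$ uniformly over \emph{all} $t_0\in(0,\infty)$; the scaling step is harmless but unnecessary.
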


\begin{proof}  The argument leading to \eqref{eq:lc1} applies also in the current setting, yielding the following: For any $\eps>0$ there exists $\delta=\delta(u,\eps)$ so that
\begin{equation}\label{eq:lc1'}
\norm{e^{i(t-t_0)\Delta}u(t_0)}_{W([(1-\delta)t_0,(1+\delta)t_0] )} \le \eps\qtq{for any}t_0\in (0,\infty).
\end{equation}

Choosing $\eps>0$ sufficiently small, Corollary~\ref{C:stability} yields
$$
\norm{u}_{W([(1-\delta)t_0,(1+\delta)t_0] )} \lesssim \eps + \eps^\beta
$$
uniformly for $t_0\in (0,\infty)$.  The estimate \eqref{E:boci} now follows by covering $[T,2T]$ by $O(\delta^{-1})$ intervals of this type.
\end{proof}

We are now in a position to rule out self-similar solutions.  We modify arguments from \cite{KTV, KVZ}.  In our setting, the argument is considerably simpler due mainly to Lemma~\ref{L:ss1} below.

\begin{proof}[Proof of Theorem~\ref{noss}]  Throughout the proof, we denote the nonlinearity by
\[
F(u) = \mu |u|^p u.
\]
Suppose towards a contradiction that there exists a self-similar almost periodic solution  $u$.  Recalling the notation from \eqref{def:JN} and Section~\ref{S:FS}, we define 
\begin{align*}
 \M(A) & := \sup_{T\in(0,\infty)} \| J^{|s_c|}_{\leq AT^{\frac12}}(T)u(T)\|_{L_x^2}, \\
 \S(A) &  := \sup_{T\in(0,\infty)} \sum_{j=1}^2 \|J^{|s_c|}_{\leq AT^{\frac12}}(t)u(t)\|_{L_t^{q_j,2}L_x^{r_j}([T,2T]\times\R^d)}, \\
 \N(A) & := \sup_{T\in(0,\infty)} \| J^{|s_c|}_{\leq AT^{\frac12}}(t)F(u(t))\|_{L_t^{q_1',2} L_x^{r_1'}([T,2T]\times\R^d)}.
\end{align*}

As $1\leq \frac{t}{T}\leq 2$ for $t\in[T,2T]$ and $\M(A)$ is increasing in $A$, we have
\begin{equation}\label{infty}
\| J^{|s_c|}_{\leq AT^{\frac12}}(t)u(t)\|_{L_t^\infty L_x^2([T,2T]\times\R^d)} \lesssim \M(A),
\end{equation}
uniformly in $A,T$.  Also, by Proposition~\ref{P:boci},
\begin{equation}\label{E:msnbd}
\M(A)+\S(A) \lesssim_u 1.
\end{equation}

We prove three lemmas relating $\M,$ $\S$, and $\N$. 

\begin{lemma}[$\M$ controls $\S$]\label{L:ss1} There exists $\delta_0>0$ such that
\[
\S(A)\lesssim A^{\delta_0}\M(A).
\]
\end{lemma}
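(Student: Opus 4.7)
The strategy is Duhamel plus Strichartz, then exploit the low-frequency truncation through Bernstein. For any $T>0$ I apply the Duhamel formula to $u$ on $[T,2T]$ with base point $T$. Since $J^{|s_c|}_{\leq N}(t) = e^{it\Delta}\varphi(x/N)|x|^{|s_c|}e^{-it\Delta}$ satisfies the commutation
\[
J^{|s_c|}_{\leq N}(t)\,e^{i(t-s)\Delta}g(s) = e^{i(t-s)\Delta}\,J^{|s_c|}_{\leq N}(s)g(s),
\]
one may apply $J^{|s_c|}_{\leq AT^{1/2}}(t)$ throughout the Duhamel identity. Proposition~\ref{prop:Strichartz} with $s=0$ then gives, for each admissible pair $(q_j,r_j)$,
\[
\|J^{|s_c|}_{\leq AT^{1/2}}(t)u(t)\|_{L_t^{q_j,2}L_x^{r_j}([T,2T])} \lesssim \|J^{|s_c|}_{\leq AT^{1/2}}(T)u(T)\|_{L_x^2} + \|J^{|s_c|}_{\leq AT^{1/2}}(s)F(u(s))\|_{L_t^{q_1',2}L_x^{r_1'}([T,2T])}.
\]
Summing over $j=1,2$ and taking the supremum in $T$ reduces the lemma to proving $\N(A) \lesssim A^{\delta_0}\M(A)$.

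Next I rewrite the nonlinear integrand using the gauge invariance $F(u)=M(t)F(\tilde u)$ with $\tilde u=M(-t)u$ and the identity \eqref{def:JN}; on $[T,2T]$ the factor $(2|s|)^{|s_c|}$ is comparable to $T^{|s_c|}$ and the cutoff $AT^{1/2}/s$ is comparable to $N_0 := AT^{-1/2}$. Split $\tilde u=\tilde u_L+\tilde u_H$ with $\tilde u_L = P_{\leq N_0}\tilde u$, and write $F(\tilde u)=F(\tilde u_L)+\bigl[F(\tilde u)-F(\tilde u_L)\bigr]$. For the contribution of $F(\tilde u_L)$, Lemma~\ref{L:fractional} (part (i) for $p\geq 1$, part (ii) for $p<1$) yields $\||\nabla|^{|s_c|}F(\tilde u_L)\|_{L_x^{r_1'}} \lesssim \|\tilde u\|_{L_x^{\tilde r}}^p\||\nabla|^{|s_c|}\tilde u_L\|_{L_x^{r_1}}$; since $\tilde u_L$ has Fourier support $|\xi|\leq N_0$, Bernstein gives $\||\nabla|^{|s_c|}\tilde u_L\|_{L_x^{r_1}}\lesssim N_0^{d(1/2-1/r_1)}\||\nabla|^{|s_c|}\tilde u_L\|_{L_x^2}$, with exponent $d(1/2-1/r_1)=2/q_1$ by admissibility. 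Integrating in time via Lemma~\ref{lem:gH} and \eqref{scaling}, bounding $\|\tilde u\|_{S^{\text{weak}}}\lesssim \|u\|_W\lesssim 1$ (Lemma~\ref{L:embeddings} and Proposition~\ref{P:boci}) and applying \eqref{infty} to control $T^{|s_c|}\||\nabla|^{|s_c|}\tilde u_L\|_{L_t^\infty L_x^2([T,2T])}$ by $\M(A)$, the powers of $T$ cancel by admissibility, leaving a bound $\lesssim A^{2/q_1}\M(A)$.

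For the remainder $F(\tilde u)-F(\tilde u_L)$ I apply the mean value / H\"older-continuity framework from the proof of Lemma~\ref{L:parap}, bounding $\|F(\tilde u)-F(\tilde u_L)\|_{L_t^{q_1',2}L_x^{r_1'}([T,2T])}$ by $\|\tilde u\|_{S^{\text{weak}}}^p\,\|\tilde u_H\|_{L_t^{q_1,2}L_x^{r_1}([T,2T])}$ (or its $|t\nabla|^{-\eps}$-weighted version when $p<1$). A Littlewood-Paley decomposition together with Bernstein on each dyadic shell $N>N_0$ gives $\|\tilde u_H\|_{L_x^{r_1}} \lesssim N_0^{-|s_c|}\||\nabla|^{|s_c|}\tilde u\|_{L_x^{r_1}}$; the full norm on the right is in turn bounded by $T^{-|s_c|}\|u\|_W$. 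Since Bernstein introduces a factor $N_0^{|s_c|}$ when removing the outer $P_{\leq N_0}|\nabla|^{|s_c|}$, these powers cancel exactly, and the remainder contributes a universal constant---absorbed into $A^{\delta_0}\M(A)$ using that $\M(A)$ is bounded below for $A$ past the compactness modulus of the self-similar solution (and trivially otherwise). The main obstacle is the high-frequency term: one must track the Littlewood-Paley summation precisely, and handle the non-smooth case $p<1$ via the paraproduct mechanism developed in Lemma~\ref{L:parap}.
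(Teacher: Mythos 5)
Your reduction via Duhamel and Strichartz is fine as far as it goes: the commutation $J^{|s_c|}_{\leq N}(t)e^{i(t-s)\Delta}=e^{i(t-s)\Delta}J^{|s_c|}_{\leq N}(s)$ is correct, Proposition~\ref{prop:Strichartz} with $s=0$ gives $\S(A)\lesssim \M(A)+\N(A)$, and your low-frequency term does produce $A^{2/q_1}\M(A)$ (up to $u$-dependent constants from Proposition~\ref{P:boci}, which the application tolerates). The genuine gap is the high-frequency remainder. Your own bookkeeping shows it contributes only $O_u(1)$: the $N_0^{|s_c|}$ lost by Bernstein on the outer projection exactly cancels the $N_0^{-|s_c|}$ gained from the high-frequency cutoff, and the powers of $T$ cancel as well, so no power of $A$ survives. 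The proposed absorption of this constant into $A^{\delta_0}\M(A)$ works only for $A$ large (there indeed $\M(A)\gtrsim_u 1$, by pre-compactness of $K$ and the fact that $\psi(t)$ stays bounded away from zero). For small $A$ the claim is not ``trivial'': $\M(A)\to 0$ as $A\to 0$, so an additive $O_u(1)$ error destroys the estimate precisely in the regime that matters. And the small-$A$ regime is the whole point: in the proof of Theorem~\ref{noss} the lemma is iterated with Lemmas~\ref{L:ss2} and \ref{L:ss3} to get $\M(B)\lesssim_u B^{|s_c|+\eps}$ for arbitrarily small dyadic $B$, which is then summed against $(B\sqrt{t})^{-|s_c|}$. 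Worse, extracting a power of $A$ from the high-frequency part of the nonlinearity is exactly the content of Lemma~\ref{L:ss2} (the $|t\nabla|^{-\eps}$ paraproduct bound giving $\sum_{N>A}(A/N)^{|s_c|+\eps}\S(N)$), and that only yields decay in $A$ once decay of $\S(N)$ is already known; so your route is essentially circular: it tries to prove the engine of the bootstrap by running the bootstrap.

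For contrast, the paper's proof never touches the equation. For each fixed $t\in[T,2T]$ one writes $J^{|s_c|}_{\leq A\sqrt{T}}(t)u(t)=e^{it\Delta}\bigl[\varphi\bigl(\tfrac{x}{A\sqrt{T}}\bigr)|x|^{|s_c|}e^{-it\Delta}u(t)\bigr]$, applies the $L_x^{r_j'}\to L_x^{r_j}$ dispersive estimate, uses H\"older on the support $\{|x|\lesssim A\sqrt{T}\}$ to pass from $L_x^{r_j'}$ to $L_x^2$ at the cost of $(A\sqrt{T})^{2/q_j}$, bounds the resulting $L_x^2$ norm by $\M(A)$ via \eqref{infty}, and integrates $|t|^{-2/q_j}\sim T^{-2/q_j}$ over the dyadic interval in $L_t^{q_j,2}$ to get $T^{-1/q_j}$; the powers of $T$ cancel, leaving $A^{2/q_j}\M(A)$ for every $A>0$ with universal constants. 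That purely linear, pointwise-in-time mechanism is what supplies the uniform small-$A$ gain that your nonlinear argument cannot produce.
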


\begin{proof} By the dispersive estimate, H\"older's inequality, \eqref{infty}, and the fact that each $(q_j,r_j)$ is an admissible pair, we have
\begin{align*}
\| & J^{|s_c|}_{\leq AT^{\frac12}}(t)u(t) \|_{L_t^{q_j,2} L_x^{r_j}([T,2T]\times\R^d)} \\
& \lesssim \| |t|^{-\frac{2}{q_j}} \|_{L_t^{q_j,2}([T,2T])} \| \varphi(\tfrac{x}{A\sqrt{T}})|x|^{|s_c|} e^{-it\Delta}u(t)\|_{L_t^\infty L_x^{r_j'}([T,2T]\times\R^d)} \\
& \lesssim |T|^{-\frac{1}{q_j}}(AT^{\frac12})^{\frac{2}{q_j}} \|J^{|s_c|}_{\leq AT^{\frac12}}(t)u(t)\|_{L_t^\infty L_x^2([T,2T]\times\R^d)} \lesssim A^{\frac{2}{q_j}} \M(A)
\end{align*}
for $j=1,2.$ The result follows. 
\end{proof}

\begin{lemma}[$\S$ controls $\N$]\label{L:ss2} For $\eps>0$ small enough, 
\begin{equation}
\N(A) \lesssim_u \sum_{N>A} (\tfrac{A}{N})^{|s_c|+\eps} \S(N) + \bigl[\S(A)\bigr]^{p+1}. \label{macN}
\end{equation}
Consequently, for any $0<\delta<|s_c|+\eps$, we have
\begin{equation}\label{macN2}
\S(A)\lesssim_u A^\delta \implies \N(A)\lesssim_u A^\delta.
\end{equation}
\end{lemma}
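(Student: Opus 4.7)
The plan is to pass to the tilde frame $\tilde u(t) := M(-t) u(t)$, where gauge invariance gives $M(-t) F(u) = F(\tilde u)$ and, for $t \in [T, 2T]$, identification via \eqref{def:JN} and the unitarity of $M(t)$ turns the operator $J^{|s_c|}_{\leq AT^{1/2}}(t)$ applied to $F(u)$ into $T^{|s_c|} |\nabla|^{|s_c|} P_{\leq A/\sqrt{T}}$ applied to $F(\tilde u)$, up to harmless bounded factors. Setting $N_0 := A/\sqrt{T}$ and writing $\tilde u = v + w$ with $v = P_{\leq N_0}\tilde u$ and $w = P_{>N_0}\tilde u$, I will split $F(\tilde u) = F(v) + [F(\tilde u) - F(v)]$ and estimate the two pieces separately.

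For the $F(v)$ piece, I drop the harmless $L^{r_1'}$-bounded projection $P_{\leq N_0}$ (which commutes with $|\nabla|^{|s_c|}$) and apply the fractional chain rule (Lemma~\ref{L:fractional}) following the template of Lemma~\ref{L:nonlinear}; combined with the Sobolev step from the proof of Lemma~\ref{L:embeddings} and $\||t|^{-|s_c|}\|_{L_t^{1/|s_c|,\infty}([T,2T])} \sim 1$, this cleanly produces a bound of $[\S(A)]^{p+1}$. The more delicate piece is $F(\tilde u) - F(v)$. Here the key manipulation is the symbolic identity $|\nabla|^{|s_c|} P_{\leq N_0} = N_0^{|s_c| + \eps}\, m(\nabla) \, |\nabla|^{-\eps}$, where $m$ is a Mikhlin-type multiplier bounded on $L^{r_1'}$ uniformly in $N_0$. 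Writing $|\nabla|^{-\eps} = |t|^{\eps} |t\nabla|^{-\eps}$ and using $|t| \sim T$ on the dyadic interval, this moves the estimate, with prefactor $A^{|s_c|+\eps} T^{(|s_c|+\eps)/2}$, over to controlling $\||t\nabla|^{-\eps}[F(\tilde u) - F(v)]\|_{L_t^{q_1',2} L_x^{r_1'}}$---which is exactly the quantity handled by Lemma~\ref{L:parap}. The $\|u\|_W^p + \|v\|_W^p$ prefactor from that lemma is $O_u(1)$ by Proposition~\ref{P:boci} and the $L^p$-boundedness of Littlewood--Paley projections. Expanding $w = \sum_{M > N_0 \text{ dyadic}} \tilde u_M$, applying Bernstein $\|\tilde u_M\|_{L^{r_1}} \sim M^{-|s_c|} \||\nabla|^{|s_c|}\tilde u_M\|_{L^{r_1}}$, and recognizing $T^{|s_c|}\||\nabla|^{|s_c|}\tilde u_M\|_{L_t^{q_1,2}L_x^{r_1}([T,2T])} \lesssim \S(K)$ for $K = M\sqrt{T}$, I obtain a per-piece factor $(TM)^{-\eps} M^{-|s_c|} T^{-|s_c|} = T^{-(|s_c|+\eps)/2} K^{-|s_c|-\eps}$; the $T$-dependent factors then cancel against the prefactor---as they must, since both sides of \eqref{macN} are suprema over $T$---to yield $A^{|s_c|+\eps} \sum_{K > A} K^{-|s_c|-\eps} \S(K) = \sum_{N > A} (A/N)^{|s_c|+\eps} \S(N)$. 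Taking the sup over $T$ closes \eqref{macN}.

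For the consequence \eqref{macN2}, substituting $\S(N) \lesssim_u N^\delta$ into the sum gives $A^{|s_c|+\eps}\sum_{N > A} N^{\delta - |s_c| - \eps} \sim A^\delta$, a convergent geometric series since $\delta < |s_c| + \eps$. The $[\S(A)]^{p+1}$ term is bounded by $A^{\delta(p+1)} \leq A^\delta$ for $A \leq 1$ and by $O_u(1) \leq A^\delta$ for $A \geq 1$ via the a priori bound \eqref{E:msnbd}. I expect the main technical obstacle to be the careful bookkeeping of all the $T$- and $A$-dependent factors through the difference estimate; the conceptual engine, however, is the $|\nabla|^{-\eps}$ gain from Lemma~\ref{L:parap}, which supplies the summability exponent $|s_c| + \eps > |s_c|$ that makes the dyadic sum over $N > A$ converge in the first place.
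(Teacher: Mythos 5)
Your proposal is correct and follows essentially the same route as the paper: the same low/high frequency decomposition of the nonlinearity in the $\tilde u = M(-t)u$ frame, the fractional chain rule (as in Lemmas~\ref{L:nonlinear} and~\ref{L:embeddings}) for the $[\S(A)]^{p+1}$ term, and the Bernstein gain of $(A\sqrt{T})^{|s_c|+\eps}$ combined with Lemma~\ref{L:parap}, Proposition~\ref{P:boci}, and a dyadic expansion of the high frequencies for the summed term. The only cosmetic deviation is your use of a time-independent cutoff $A/\sqrt{T}$ in place of the paper's $AT^{1/2}t^{-1}$, which on a dyadic interval costs only harmless factor-of-two adjustments absorbed by the monotonicity of $\S$ and reindexing of the dyadic sum.
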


\begin{proof}  We first prove \eqref{macN}. Fix $T>0$. In the estimates below, all space-time norms are taken over $[T,2T]\times\R^d$. Define 
\[
\tilde u(t) := M(-t)u(t).
\] 

Using \eqref{def:JN} and gauge invariance, we first note that
\[
\|J^{|s_c|}_{\leq AT^{\frac12}}F(u)\|_{L_t^{q_1',2}L_x^{r_1'}} \sim \| P_{\leq AT^{\frac12}t^{-1}} |t|^{|s_c|} |\nabla|^{|s_c|} F(\tilde u)\|_{L_t^{q_1',2} L_x^{r_1'}}.
\]
We decompose the nonlinearity as
\begin{equation}\label{E:decompose}
F(\tilde u) = F(\tilde u_{\leq A T^{\frac12}t^{-1}}) + [F(\tilde u)-F(\tilde u_{\leq A T^{\frac12}t^{-1}})].
\end{equation}

For the first term, we can estimate as in Lemma~\ref{L:nonlinear} and Lemma~\ref{L:embeddings} and use \eqref{def:Xn} to deduce
\[
\| |t \nabla|^{|s_c|} F(\tilde u_{\leq AT^{\frac12}t^{-1}}) \|_{L_t^{q_1',2}L_x^{r_1'}} \lesssim \S(A)^{p+1}.
\]

Next, we estimate the contribution of the second term in \eqref{E:decompose}.  We will apply Lemma~\ref{L:parap} with
\[
v(t) = -M(t) P_{>AT^{\frac12}t^{-1}}\tilde u(t).
\]
Note that by \eqref{def:Xn}, we have 
\[
\|v\|_{W}\lesssim \|u\|_W.
\]
Choosing $\eps>0$ sufficiently small, we use Bernstein, Lemma~\ref{L:parap}, and Proposition~\ref{P:boci} to estimate  
\begin{align*}
\bigl\| & |t \nabla|^{|s_c|} P_{\leq AT^{\frac12}t^{-1}} [F(\tilde u)-F(\tilde u_{\leq A T^{\frac12}t^{-1}})]  \bigr\|_{L_t^{q_1',2} L_x^{r_1'}} \\ 
& \lesssim (AT^{\frac12})^{|s_c|+\eps} \bigl\| |t \nabla|^{-\eps} [F(\tilde u)-F(\tilde u_{\leq A T^{\frac12}t^{-1}})] \bigr\|_{L_t^{q_1',2} L_x^{r_1'}} \\
& \lesssim (AT^{\frac12})^{|s_c|+\eps} \|u\|_W^p \| |t \nabla|^{-\eps} \tilde u_{>AT^{\frac12}t^{-1}} \|_{L_t^{q_1,2}L_x^{r_1}} \\
&\lesssim_u \sum_{N>A} (AT^{\frac12})^{|s_c|+\eps} \| | t \nabla|^{-\eps} P_{NT^{\frac12}t^{-1}} \tilde u\|_{L_t^{q_1,2}L_x^{r_1}}\lesssim_u\sum_{N>A}\bigl(\tfrac{A}{N}\bigr)^{|s_c|+\eps} \S(N).
\end{align*}
This completes the proof of \eqref{macN}. 

We turn to \eqref{macN2}. Suppose that $\S(A)\lesssim_u A^{\delta}$ for some $0<\delta<|s_c|+\eps$. Using \eqref{E:msnbd}, we estimate
\begin{align*}
\N(A) &\lesssim_u \sum_{N>A} (\tfrac{A}{N})^{|s_c|+\eps}N^{\delta} + \S(A)^p A^{\delta}  \lesssim_u A^\delta.
\end{align*}
This completes the proof of Lemma~\ref{L:ss2}. \end{proof}

\begin{lemma}[$\N$ controls $\M$]\label{L:ss3} For any $\delta>0$, 
\[
\N(A) \lesssim_u A^{\delta} \implies \M(A)\lesssim_u A^{\delta}.
\]
\end{lemma}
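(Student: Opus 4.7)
The plan is to express $J^{|s_c|}_{\leq AT^{1/2}}(T)u(T)$ as a Duhamel integral from $T$ to $\infty$, bound its $L^2_x$-norm by a Lorentz space-time integral of $J^{|s_c|}_{\leq AT^{1/2}}(s)F(u)$ via dual Strichartz, and then exploit a simple scaling observation on dyadic blocks.

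First I would apply $\varphi(x/(AT^{1/2}))|x|^{|s_c|}$ (which is bounded $\fhsc \to L^2_x$) followed by the $L^2_x$-unitary operator $e^{iT\Delta}$ to the forward reduced Duhamel identity of Proposition~\ref{P:RD}. Using that each of these maps is weakly continuous, together with the identity $e^{iT\Delta}\varphi(x/(AT^{1/2}))|x|^{|s_c|}e^{-is\Delta} = e^{i(T-s)\Delta}J^{|s_c|}_{\leq AT^{1/2}}(s)$, this yields
\[
J^{|s_c|}_{\leq AT^{1/2}}(T)u(T) = \wlim_{S\to\infty}\, i\mu \int_T^S e^{i(T-s)\Delta}J^{|s_c|}_{\leq AT^{1/2}}(s)F(u)\,ds
\]
weakly in $L^2_x$, where $F(u) = \mu|u|^pu$. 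Taking $L^2_x$-norms, invoking weak lower semicontinuity, and applying the Lorentz-based dual Strichartz estimate for the admissible pair $(q_1,r_1)$ (Proposition~\ref{prop:Strichartz}) then gives
\[
\|J^{|s_c|}_{\leq AT^{1/2}}(T)u(T)\|_{L^2_x} \lesssim \|J^{|s_c|}_{\leq AT^{1/2}}(s)F(u)\|_{L^{q_1',2}_s L^{r_1'}_x([T,\infty)\times\R^d)}.
\]

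Next I would decompose $[T,\infty) = \bigcup_{n\ge 0}[T_n, 2T_n]$ with $T_n = 2^n T$. Since $L^{q_1',2}$ is a Banach space under an equivalent (Calder\'on) norm, the triangle inequality bounds the right-hand side by $\sum_n \|J^{|s_c|}_{\leq AT^{1/2}}(s)F(u)\|_{L^{q_1',2}_s L^{r_1'}_x([T_n, 2T_n]\times\R^d)}$. The key observation is the scaling identity $AT^{1/2} = (A\cdot 2^{-n/2})\,T_n^{1/2}$, which identifies the $n$-th summand with
\[
\|J^{|s_c|}_{\leq (A\cdot 2^{-n/2})T_n^{1/2}}(s)F(u)\|_{L^{q_1',2}_s L^{r_1'}_x([T_n, 2T_n]\times\R^d)} \leq \N(A\cdot 2^{-n/2}).
\]
Invoking the hypothesis $\N(B) \lesssim_u B^\delta$ bounds this by $C_u A^\delta 2^{-n\delta/2}$, and summing the convergent geometric series (since $\delta > 0$) followed by taking the supremum over $T>0$ yields $\M(A)\lesssim_u A^\delta$.

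I do not expect any step to present a genuine obstacle: the Duhamel-plus-Strichartz setup is essentially formal from the cited propositions, and the main (elementary) insight is the scaling identification $AT^{1/2} = (A\cdot 2^{-n/2})T_n^{1/2}$, which reveals the fixed physical-space cutoff $AT^{1/2}$ as an effectively \emph{smaller} parameter $A\cdot 2^{-n/2}$ on the later block $[T_n, 2T_n]$. It is precisely this shrinking of the effective $\N$-parameter that produces the geometric decay $2^{-n\delta/2}$ needed to close the sum at scale $A^\delta$.
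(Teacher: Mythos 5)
Your proof is correct and essentially matches the paper's argument: both rest on the reduced Duhamel formula, weak lower semicontinuity, dual Strichartz, the scaling identity $AT^{1/2}=(A2^{-n/2})T_n^{1/2}$ converting a fixed spatial cutoff into a shrinking $\N$-parameter on later dyadic blocks, and summing the resulting geometric series. The only cosmetic difference is the order of operations — the paper first splits the Duhamel integral into dyadic blocks $[2^kT,2^{k+1}T]$ and applies Strichartz to each, whereas you apply dual Strichartz once over $[T,\infty)$ and then split the resulting $L_t^{q_1',2}$-norm, relying on the normability of the Lorentz scale; both routes are valid.
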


\begin{proof}  Fix $T>0$ and suppose we have the bound $\N(A)\lesssim_u A^{\delta}$. 

Using Proposition~\ref{P:RD} and the commutation properties of $J$,
\[
J^{|s_c|}_{\leq AT^{\frac12}} u(T) =\wlim_{T'\to\infty} i\int_T^{T'} e^{i(T-s)\Delta}J^{|s_c|}_{\leq AT^{\frac12}}(s) F(u(s))\,ds.
\]
Thus, by weak lower-semicontinuity of the norm and Strichartz,
\begin{align*}
\|J^{|s_c|}_{\leq AT^{\frac12}} u(T)\|_{L_x^2} & \leq \sum_{k=0}^\infty\,\biggl\| \int_{2^kT}^{2^{k+1}T} e^{i(T-s)\Delta}J^{|s_c|}_{\leq AT^{\frac12}}(s)F(u(s))\,ds\biggr\|_{L_x^2} \\
& \lesssim \sum_{k=0}^\infty \| J^{|s_c|}_{\leq AT^{\frac12}}(t) F(u(t))\|_{L_t^{q_1',2}L_x^{r_1'}([2^kT,2^{k+1}T]\times\R^d)} \\
& \lesssim \sum_{k=0}^\infty \N(2^{-\frac{k}{2}}A) 
 \lesssim_u \sum_{k=0}^\infty 2^{-\frac{\delta k}{2}} A^{\delta}  
 \lesssim_u A^\delta. 
\end{align*}
The result follows. 
\end{proof}

Using \eqref{E:msnbd} as a starting point, we can combine Lemma~\ref{L:ss1}, Lemma~\ref{L:ss2}, and Lemma~\ref{L:ss3} and iterate finitely many times  to deduce  
\[
\M(A)\lesssim_u A^{|s_c|+\eps}
\]
for some $\eps>0$. We claim that this implies $u(t)\in L_x^2$ for all $t\in(0,\infty)$.  

Indeed, given $t\in (0,\infty)$ and $A>0$, we have
\[
\| J^0_{\leq At^{\frac12}}(t)u(t)\|_{L_x^2} \lesssim \sum_{B\leq A} (B\sqrt{t})^{-|s_c|} \|J^{|s_c|}_{Bt^{\frac12}}(t)u(t)\|_{L_x^2} \lesssim_{u} A^{\eps}t^{-\frac{|s_c|}{2}}.
\]
On the other hand,
\[
\| J^0_{>At^{\frac12}}(t)u(t)\|_{L_x^2} \lesssim_u A^{-|s_c|}t^{-\frac{|s_c|}{2}}.
\]
Thus $u(t)\in L_x^2$.  Moreover, choosing $A=1$ and letting $t\to \infty$, we see that the $L^2_x$-norm of the solution decays to zero as $t\to \infty$.  By the conservation of mass, this implies that $u\equiv 0$, a contradiction.  This completes the proof of Theorem~\ref{noss}.
\end{proof}


\begin{thebibliography}{100}

\bibitem{Bar} J. Barab, \emph{Nonexistence of asymptotically free solutions for a nonlinear Schr\"odinger equation.}  J. Math. Phys. \textbf{25} (1984), no. 11, 3270--3273. MR0761850 

\bibitem{BahGer} H. Bahouri and P. G\'erard,  \emph{High frequency approximation of solutions to critical nonlinear wave equations.} Amer. J. Math. \textbf{121} (1999), no. 1, 131--175. MR1705001

\bibitem{Beg} P. B\'egout, \emph{Convergence to scattering states in the nonlinear Schr\"odinger equation.} Communications in Contemporary Mathematics \textbf{3}, (2001), 403--418. MR1849648

\bibitem{BegVar} P. B\'egout and A. Vargas, \emph{ Mass concentration phenomena for the $L^2$-critical nonlinear Schr\"odinger equation.} Trans. Amer. Math. Soc. \textbf{359} (2007), no. 11, 5257--5282. MR2327030

\bibitem{Bou1} J. Bourgain, \emph{Global wellposedness of defocusing critical nonlinear Schr\"odinger equation in the radial case.} J. Amer. Math. Soc. \textbf{12} (1999), no. 1, 145--171. MR1626257

\bibitem{Bou2} J. Bourgain, \emph{Refinements of Strichartz' inequality and applications to 2D-NLS with critical nonlinearity.} Internat. Math. Res. Notices 1998, no. 5, 253--283. MR1616917

\bibitem{CarKer} R. Carles and S. Keraani, \emph{On the role of quadratic oscillations in nonlinear Schr\"odinger equations. II. The $L^2$-critical case.} Trans. Amer. Math. Soc. \textbf{359} (2007), no. 1, 33--62.  MR2247881

\bibitem{Caz} T. Cazenave, \emph{Semilinear Schr\"odinger equations.} Courant Lecture Notes in Mathematics, 10. New York University, Courant Institute of Mathematical Sciences, New York; American Mathematical Society, Providence, RI, 2003. MR2002047

\bibitem{CazWei} T. Cazenave and F. B. Weissler, \emph{The Cauchy problem for the critical nonlinear Schr\"odinger equation in $H^s$.} Nonlinear Anal. \textbf{14} (1990), no. 10, 807--836.  MR1055532

\bibitem{CKSTT} J. Colliander, M. Keel, G. Staffilani, T. Takaoka, and T. Tao, \emph{Global well-posedness and scattering for the energy-critical nonlinear Schr\"odinger equation in $\R^3$.} Ann. of Math. (2) \textbf{167} (2008), no. 3, 767--865. MR2415387

\bibitem{ChoHwaOza} Y. Cho, G. Hwang, and T. Ozawa, \emph{Global well-posedness of critical nonlinear Schr\"odinger equations below $L^2$.} Discrete Contin. Dyn. Syst. \textbf{33} (2013), no. 4, 1389--1405.  MR2995852

\bibitem{ChrColTao} M. Christ, J. Colliander, and T. Tao, \emph{A priori bounds and weak solutions for the nonlinear Schr\"odinger equation in Sobolev spaces of negative order.} J. Funct. Anal. \textbf{254} (2008), no. 2, 368--395. MR2376575

\bibitem{ChrWei} M. Christ and M. Weinstein, \emph{Dispersion of small amplitude solutions of the generalized Korteweg-de Vries equation.} J. Funct. Anal. \textbf{100} (1991), no. 1, 87--109. MR1124294

\bibitem{Dod1} B. Dodson, \emph{Global well-posedness and scattering for the defocusing, $L^2$-critical, nonlinear Schr\"odinger equation when $d=1$.} Amer. J. of Math. \textbf{138} (2016), no. 2, 531--569.

\bibitem{Dod2} B. Dodson, \emph{Global well-posedness and scattering for the defocusing, $L^2$-critical, nonlinear Schr\"odinger equation when $d=2$.} Preprint {\tt arXiv:1006.1375}

\bibitem{Dod3} B. Dodson, \emph{Global well-posedness and scattering for the defocusing, $L^2$-critical nonlinear Schr\"odinger equation when $d\geq 3$.}  J. Amer. Math. Soc. \textbf{25} (2012), no. 2, 429--463. MR2869023

\bibitem{Dod4} B. Dodson, \emph{Global well - posedness and scattering for the focusing, energy - critical nonlinear Schr\"odinger problem in dimension $d=4$ for initial data below a ground state threshold.} Preprint {\tt arXiv:1409.1950}

\bibitem{Dod5} B. Dodson, \emph{Global well-posedness and scattering for the mass critical nonlinear Schr\"odinger equation with mass below the mass of the ground state.} Adv. Math. \textbf{285} (2015), 1589--1618. MR3406535

\bibitem{DMMZ} B. Dodson, C. Miao, J. Murphy, and J. Zheng, \emph{The defocusing quintic NLS in four space dimensions.} Preprint {\tt arXiv:1508.07298}

\bibitem{GinVel} J. Ginibre and G. Velo, \emph{Smoothing properties and retarded estimates for some dispersive evolution equations.} 
Comm. Math. Phys. \textbf{144} (1992), no. 1, 163--188. MR1151250

\bibitem{Gri} M. Grillakis, \emph{On nonlinear Schr\"odinger equations.} Comm. Partial Differential Equations \textbf{25} (2000), no. 9--10, 1827--1844. MR1778782

\bibitem{Hid} K. Hidano, \emph{Nonlinear Schr\"odinger equations with radially symmetric data of critical regularity.} Funkcial. Ekvac. \textbf{51} (2008), no. 1, 135--147. MR2428826

\bibitem{Hunt} R. A. Hunt, \emph{On L(p,q) spaces.} 
Enseignement Math. (2) \textbf{12} (1966) 249--276. MR0223874

\bibitem{KeeTao} M. Keel and T. Tao, \emph{Endpoint Strichartz estimates.} Amer. J. Math. \textbf{120} (1998), no. 5, 955--980. MR1646048

\bibitem{KenMer} C. E. Kenig and F. Merle, \emph{Global well-posedness, scattering and blow-up for the energy-critical, focusing, non-linear Schr\"odinger equation in the radial case.} Invent. Math. \textbf{166} (2006), 645--675. MR2257393

\bibitem{KenMer2} C. E. Kenig and F. Merle, \emph{Scattering for $\dot H^{1/2}$ bounded solutions to the cubic, defocusing NLS in 3 dimensions.} Trans. Amer. Math. Soc. \textbf{362} (2010), no. 4, 1937--1962.  MR2574882

\bibitem{Ker1} S. Keraani, \emph{On the defect of compactness for the Strichartz estimates of the Schr\"odinger equations.} J. Differential Equations \textbf{175} (2001), no. 2, 353--392. MR1855973

\bibitem{Ker2} S. Keraani, \emph{On the blow up phenomenon of the critical nonlinear Schr\"odinger equation.} J. Funct. Anal. \textbf{235} (2006), no. 1, 171--192.  MR2216444

\bibitem{KTV} R. Killip, T. Tao, and M. Visan, \emph{The cubic nonlinear Schr\"odinger equation in two dimensions with radial data.} J. Eur. Math. Soc. (JEMS) \textbf{11} (2009), no. 6, 1203--1258. MR2557134

\bibitem{KV1} R. Killip and M. Visan, \emph{Nonlinear Schr\"odinger equations at critical regularity.} Clay Math. Proc. \textbf{17} (2013), 325--437. MR3098643

\bibitem{KV2} R. Killip and M. Visan, \emph{The focusing energy-critical nonlinear Schr\"odinger equation in dimensions five and higher.} Amer. J. Math. \textbf{132} (2010), no. 2, 361--424.  MR2654778

\bibitem{KV3} R. Killip and M. Visan, \emph{Global well-posedness and scattering for the defocusing quintic NLS in three dimensions.} Anal. PDE \textbf{5} (2012), no. 4, 855--885. MR3006644

\bibitem{KV4} R. Killip and M. Visan, \emph{Energy-supercritical NLS: critical $\dot H^s$-bounds imply scattering.} Comm. Partial Differential Equations \textbf{35} (2010), no. 6, 945--987.  MR2753625

\bibitem{KVZ} R. Killip, M. Visan, and X. Zhang, \emph{The mass-critical nonlinear Schr\"odinger equation with radial data in dimensions three and higher.} Anal. PDE \textbf{1} (2008), no. 2, 229--266. MR2472890

\bibitem{Mas1} S. Masaki, \emph{On minimal non-scattering solution for focusing mass-subcritical nonlinear Schr\"odinger equation.} Preprint {\tt arXiv:1301.1742}
 
\bibitem{Mas2} S. Masaki, \emph{A sharp scattering condition for focusing mass-subcritical nonlinear Schr\"odinger equation.} Commun. Pure Appl. Anal. \textbf{14} (2015), no. 4, 1481--1531. MR3359531

\bibitem{Mas3} S. Masaki and J. Segata, \emph{Existence of a minimal non-scattering solution to the mass-subcritical generalized Korteweg-de Vries equation.} Preprint {\tt arXiv:1602.05331}

\bibitem{Mas4} S. Masaki, \emph{Two minimization problems on non-scattering solutions to mass-subcritical nonlinear Schr\"odinger equation.} Preprint \texttt{arXiv:1605.09234}

\bibitem{MasSeg} S. Masaki and J. Segata, \emph{On well-posedness of the generalized Korteweg-de Vries equation in scale critical $\hat{L}^r$ space}.  To appear in Analysis and PDE.

\bibitem{MerVeg} F. Merle and L. Vega, \emph{Compactness at blow-up time for $L^2$ solutions of the critical nonlinear Schr\"odinger equation in 2D.} Internat. Math. Res. Notices 1998, no. 8, 399--425. MR1628235

\bibitem{MMZ} C. Miao, J. Murphy, and J. Zheng, \emph{The defocusing energy-supercritical NLS in four space dimensions.} J. Funct. Anal. \textbf{267} (2014), no. 6, 1662--1724.  MR3237770

\bibitem{Mur1} J. Murphy, \emph{Intercritical NLS: critical $\dot H^s$-bounds imply scattering.} SIAM J. Math. Anal. \textbf{46} (2014), no. 1, 939--997. MR3166962

\bibitem{Mur2} J. Murphy, \emph{The defocusing $\dot H^{1/2}$-critical NLS in high dimensions.} Discrete Contin. Dyn. Syst. \textbf{34} (2014), no. 2, 733--748. MR3094603

\bibitem{Mur3} J. Murphy, \emph{The radial defocusing nonlinear Schr\"odinger equation in three space dimensions.} Comm. Partial Differential Equations \textbf{40} (2015), no. 2, 265--308.  MR3277927

\bibitem{Nak} K. Nakanishi, \emph{Asymptotically-free solutions for the short-range nonlinear Schr\"odinger equation.} SIAM J. Math. Anal. \textbf{32} (2001), no. 6, 1265--1271. MR1856248

\bibitem{NakOza} K. Nakanishi and T. Ozawa, \emph{Remarks on scattering for nonlinear Schr\"odinger equations.} NoDEA Nonlinear Differential Equations Appl. \textbf{9} (2002), no. 1, 45--68.  MR1891695

\bibitem{One} R. O'Neil, \emph{Convolution operators and $L(p,q)$ spaces.} Duke Math. J. \textbf{30} (1963), 129--142.  MR0146673

\bibitem{RV} E. Ryckman and M. Visan, \emph{Global well-posedness and scattering for the defocusing energy-critical nonlinear Schr\"odinger equation in ℝ1+4.} Amer. J. Math. \textbf{129} (2007), no. 1, 1--60. MR2288737

\bibitem{Str} W. Strauss, \emph{Nonlinear scattering theory}, Scattering Theory in Math. Physics, Reidel, Dordrecht, 1974, pp. 53--78. 

\bibitem{Strichartz} R. Strichartz, \emph{Restrictions of Fourier transforms to quadratic surfaces and decay of solutions of wave equations.} Duke Math. J. \textbf{44} (1977), no. 3, 705--714. MR0512086

\bibitem{Tao1} T. Tao, \emph{Global well-posedness and scattering for the higher-dimensional energy-critical nonlinear Schr\"odinger equation for radial data.} New York J. Math. \textbf{11} (2005), 57--80.  MR2154347

\bibitem{TVZ} T. Tao, M. Visan, and X. Zhang, \emph{Global well-posedness and scattering for the defocusing mass-critical nonlinear Schr\"odinger equation for radial data in high dimensions.} Duke Math. J. \textbf{140} (2007), no. 1, 165--202. MR2355070

\bibitem{TsuYaj} Y. Tsutsumi and K. Yajima, \emph{The asymptotic behavior of nonlinear Schr\"odinger equations.} Bull. Amer. Math. Soc. (N.S.) \textbf{11} (1984), no. 1, 186--188.  MR0741737

\bibitem{Vis0} M. Visan, \emph{The defocusing energy-critical nonlinear Schr\"odinger equation in dimensions five and higher.} Ph.D Thesis, UCLA, 2006. MR2709575

\bibitem{Vis1} M. Visan, \emph{The defocusing energy-critical nonlinear Schr\"odinger equation in higher dimensions.} Duke Math. J. \textbf{138} (2007), no. 2, 281--374.  MR2318286

\bibitem{Vis2} M. Visan, \emph{Global well-posedness and scattering for the defocusing cubic nonlinear Schr\"odinger equation in four dimensions.} Int. Math. Res. Not. IMRN 2012, no. 5, 1037--1067. MR2899959

\bibitem{Vis3} M. Visan, \emph{Dispersive Equations.} In ``Dispersive Equations and Nonlinear Waves'', Oberwolfach
Seminars \textbf{45}, Birkh\"auser/Springer Basel 2014.

\bibitem{XieFan} J. Xie and D. Fang, \emph{Global well-posedness and scattering for the defocusing $\dot H^s$-critical NLS.} Chin. Ann. Math. Ser. B \textbf{34} (2013), no. 6, 801--842. MR3122297

\end{thebibliography}
\end{document}